\documentclass[11pt]{amsart}
\usepackage{graphicx}
\usepackage{euscript}
\usepackage{amsmath}
\usepackage{amsfonts}
\usepackage{amssymb}
\usepackage{amsthm}
\usepackage{epsfig}
\usepackage{epstopdf}
\usepackage{url}
\usepackage{array}
\usepackage{amssymb}\usepackage{amscd}
\usepackage{epic}
\usepackage{eufrak}
\usepackage{tikz,underscore}
\usepackage{tikz-cd}
\usepackage{bm}
\usepackage{float}

\numberwithin{equation}{section}

\theoremstyle{plain}
\newtheorem{theorem}{Theorem}[section]
\newtheorem{lemma}[theorem]{Lemma}
\newtheorem{proposition}[theorem]{Proposition}

\newtheorem{corollary}[theorem]{Corollary}

\theoremstyle{definition}
\newtheorem{example}[theorem]{Example}
\newtheorem{remark}[theorem]{Remark}
\newtheorem{definition}[theorem]{Definition}

\def\Sp{S^{3}_{-p_{1}, \cdots, -p_{n}}(\mathcal{L})}

\def\Sph{S^{3}_{-p_{1}, \cdots, -p_{n}}(O)}
\def\deg{\textup{deg}}
\def\Z{\mathbb{Z}}
\def\G{\mathfrak{G}}
\def\g{\bm{g}}
\def\L{\mathcal{L}}
\def\H{\mathbb{H}}
\def\F{\mathbb{F}}

\oddsidemargin 0pt \evensidemargin 0pt \marginparsep 10pt
\topmargin 0pt \baselineskip 14pt \textwidth 6in \textheight 9in

\title{Four genera of  links and Heegaard Floer homology }

\author{Beibei Liu}
\address{B.L.: Department of Mathematics, UC Davis, One Shields Avenue, Davis CA 95616, USA}
\email{bxliu@math.ucdavis.edu}

\begin{document}

\begin{abstract}
For  links with vanishing pairwise linking numbers, the link components bound pairwise disjoint surfaces in $B^{4}$. In this paper, we describe the set of genera of such surfaces  in terms of the $h$-function, which is a link invariant from  Heegaard Floer homology. In particular, we  use the $h$-function to give lower bounds for the 4-genus of the link. For $L$-space links, the $h$-function is explicitly determined by Alexander polynomials of the link and sublinks. We show some  $L$-space links where the lower bounds are sharp, and also describe all possible  genera of disjoint surfaces bounded by such links.
\end{abstract}

\maketitle
\section{Introduction}

Let $\L=L_{1}\cup L_{2}\cdots \cup L_{n}$ be an oriented $n$-component link in $S^{3}$ with all  linking numbers $0$. Recall that a link bounds disjointly embedded surfaces in $B^{4}$ if and only if it has vanishing pairwise linking numbers. The \emph{4-genus} of $\L$ is defined as: 
$$g_{4}(\L)=\min\lbrace \sum\limits_{i=1}^{n} g_{i} \mid g_{i}=g(\Sigma_{i}), \Sigma_{1}\sqcup \cdots \sqcup \Sigma_{n}\hookrightarrow B^{4}, \partial \Sigma_{i}=L_{i} \rbrace .$$
If $\L$ is a knot, the 4-genus is also known as the slice genus. Powell, Murasugi and Livingston showed lower bounds for the $4$-genera of links in terms of the Levine-Tristram signatures, see \cite{Liv, Mur, Po}.  Rasmussen defined the $h$-function (as an analogue of the Fr$\o$yshov invariant in Seiberg-Witten theory) for knots, and used it to obtain nontrivial lower bounds for the slice genus of a knot \cite{Ras1, Ras}. We generalize Rasmussen's result and obtain lower bounds for the 4-genera of  links with vanishing pairwise linking numbers.  The $h$-function for links was introduced by Gorsky and N\'emethi \cite{Gor}. It is closely related to  $d$-invariants of large surgeries on links. For details, see Section \ref{background}.

We obtain lower bounds for the 4-genera of links in terms of the $h$-function. When the link has one component, we recover the lower bound for the slice genus given by Rasmussen. Here is our main result:

\begin{theorem}
\label{d-invariant inequality}
Let $\L=L_{1}\cup \cdots \cup L_{n}\subseteq S^{3}$ be an oriented link with vanishing pairwise linking numbers. Assume  that the link components $L_{i}$ bound pairwise disjoint, smoothly embedded surfaces $\Sigma_{i}\subseteq B^{4}$ of genera $g_{i}$. Then for any $\bm{v}=(v_{1}, \cdots, v_{n})\in \mathbb{Z}^{n}$, 
$$h(\bm{v})\leq \sum_{i=1}^{n} f_{g_{i}}(v_{i}).$$
where $h(\bm{v})$ is the $h$-function of $\L$, and $f_{g_{i}}: \Z\rightarrow \Z$ is defined as follows:
$$
f_{g_{i}}(v_{i}) = \left\{
        \begin{array}{ll}
           \left \lceil \dfrac{g_{i}-|v_{i}|}{2}\right\rceil & \quad |v_{i}|\leq g_{i} \\
            0 & \quad |v_{i}| > g_{i}
        \end{array}
    \right. 
$$
\end{theorem}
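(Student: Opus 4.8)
The plan is to translate the statement into an inequality between $d$-invariants of large surgeries and then extract the genus-dependence from an adjunction inequality. Recall that the $h$-function records, up to a universal term depending only on the framing, the $d$-invariants of large surgeries $S^3_{-\bm p}(\L)$, and that for the $n$-component unlink $\mathbb{U}$ one has $h_{\mathbb U}\equiv 0$. Concretely, for a framing $\bm p=(p_1,\dots,p_n)$ with every $p_i$ large, the spin$^c$ structures on $S^3_{-\bm p}(\L)$ are indexed by $\bm v\in\Z^n$; the labels split as a product precisely because the surgery matrix is the diagonal form $\mathrm{diag}(-p_1,\dots,-p_n)$, which uses the vanishing of the pairwise linking numbers. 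In these terms
$$
d(S^3_{-\bm p}(\L),\bm v)-d(S^3_{-\bm p}(\mathbb U),\bm v)=2\,h(\bm v),
$$
so it suffices to bound the left-hand side above by $2\sum_i f_{g_i}(v_i)$.

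Next I would build the comparison geometry from the surfaces. Attaching $2$-handles to $B^4$ along $\L$ with framings $-p_i$ produces a negative-definite cobordism whose boundary is $S^3_{-\bm p}(\L)$; capping each $\Sigma_i$ with the core of the corresponding $2$-handle yields pairwise disjoint closed surfaces $\hat\Sigma_i$ of genus $g_i$ with $[\hat\Sigma_i]^2=-p_i$ and $[\hat\Sigma_i]\cdot[\hat\Sigma_j]=0$ for $i\ne j$ (again using that the linking numbers vanish), so the classes $[\hat\Sigma_i]$ form an orthogonal basis of $H_2$. Performing the same construction for $\mathbb{U}$ with the disjoint slice disks gives a cobordism with the same intersection form but genus-$0$ caps. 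The two surgeries are thereby related through the genus-$g_i$ surface cobordisms $\Sigma_i$, which I can keep disjoint throughout.

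The genus now enters through the cobordism map together with the adjunction inequality. The surface cobordisms induce maps between the Floer groups of the two surgeries in each spin$^c$ structure; a spin$^c$ structure $\mathfrak s$ contributes only when its restriction to each $\hat\Sigma_i$ obeys the adjunction bound $|\langle c_1(\mathfrak s),[\hat\Sigma_i]\rangle|+[\hat\Sigma_i]^2\le 2g_i-2$, and the induced map shifts the Maslov grading by the usual $\tfrac14\bigl(c_1(\mathfrak s)^2-2\chi-3\sigma\bigr)$. Bounding this grading shift under the adjunction constraints, and comparing with the exactly known $d$-invariants of $S^3_{-\bm p}(\mathbb U)$ (connected sums of lens spaces), forces the difference of bottom gradings in the class $\bm v$ to be at most $2\sum_i f_{g_i}(v_i)$. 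Optimizing the characteristic pairing $\langle c_1(\mathfrak s),[\hat\Sigma_i]\rangle$, which in the class $\bm v$ differs from $2v_i$ only by the framing-dependent constant, subject to the adjunction constraint is what produces the ceiling function: on the range $|v_i|\le g_i$ the extra room afforded by the genus contributes $\lceil (g_i-|v_i|)/2\rceil$, while outside that range the constraint is vacuous and contributes $0$.

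The main obstacle I anticipate is the adjunction step for surfaces of negative self-intersection sitting inside a cobordism with boundary rather than a closed $4$-manifold, and carrying it out for all $n$ surfaces simultaneously while preserving disjointness. One must also be careful with orientations and grading conventions so that the estimate lands exactly on the integer ceiling $f_{g_i}$ rather than on an off-by-one weakening, and so that the reduction $h\mapsto d$ is applied with the correct sign; the complementary inequality $h(\bm v)\ge 0$ is the easy direction, following from the $d$-invariant inequality for the negative-definite filling itself. Finally, the reduction of the bound to the one-variable functions $f_{g_i}$ rests on the orthogonal splitting of $H_2$, which is exactly where the vanishing linking number hypothesis is indispensable.
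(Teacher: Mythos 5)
Your opening reduction is the right one and matches the paper's: pass to large negative surgeries, use the large surgery theorem to trade $h$ for $d$-invariants, and cap the $\Sigma_{i}$ with the 2-handle cores to get disjoint closed surfaces of square $-p_{i}$ generating $H_{2}$ of the handlebody. (One small convention slip: by Theorem \ref{d-invariant formula} the difference $d(S^{3}_{-\bm{p}}(\L),\mathfrak{s})-d(S^{3}_{-\bm{p}}(O),\mathfrak{s})$ equals $2h_{\L^{\ast}}(\bm{v})$, the $h$-function of the \emph{mirror}; the paper closes this by noting that the mirror link bounds the mirrored disjoint surfaces of the same genera, and you should do the same.)

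The genuine gap is in your central mechanism. First, no cobordism between $S^{3}_{-\bm{p}}(\L)$ and $S^{3}_{-\bm{p}}(O)$ is ever constructed: the two handlebodies have different interiors and cannot be glued along anything, so the ``maps between the Floer groups of the two surgeries'' whose grading shift you propose to bound do not exist as stated. Second, adjunction cannot produce $f_{g_{i}}$ here: since $[\hat\Sigma_{i}]^{2}=-p_{i}\ll 0$, your constraint $|\langle c_{1}(\mathfrak{s}),[\hat\Sigma_{i}]\rangle|+[\hat\Sigma_{i}]^{2}\leq 2g_{i}-2$ reads $|2v_{i}|\leq p_{i}+2g_{i}-2$, which is vacuous for every Spin$^{c}$-structure in the fundamental domain $P(\Lambda)$ of a large surgery; moreover cobordism maps on $HF^{\pm}$ carry no adjunction-type vanishing for negative-square surfaces in a manifold with boundary, so no Spin$^{c}$-structures are ``killed'' and no optimization over the surviving ones takes place. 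The paper's route is genuinely different at exactly this point: it excises disjoint tubular neighborhoods of the $\hat\Sigma_{i}$, obtaining a cobordism $\bar{X}$ from $S^{3}_{-\bm{p}}(\L)$ to the connected sum of circle bundles $\#_{i=1}^{n}B_{-p_{i}}$ with $b_{1}(\bar X)=0$, vanishing intersection form, $\sigma(\bar X)=0$ and $\chi=2\sum g_{i}$; since $\#_{i}B_{-p_{i}}$ is a \emph{standard} 3-manifold, Rasmussen's non-positive-definite $d$-invariant inequality (Proposition \ref{standard}) gives $d(S^{3}_{-\bm{p}}(\L),\mathfrak{s})\leq \sum_{i}d(B_{-p_{i}},s'_{i})+\sum_{i}g_{i}$, and the ceiling function enters solely through the computed circle-bundle $d$-invariants $d(B_{-m},\mathfrak{t}_{k})=E(m,k)-g+2f_{g}(k)$ for $|k|\leq g$ (Proposition \ref{circle bundle invariant}), which rest on $\widehat{HFK}$ of the Borromean knot in $\#^{2g}(S^{1}\times S^{2})$, i.e.\ on the exterior-algebra structure of $HF^{\infty}$ --- not on adjunction. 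Without this intermediate standard 3-manifold and the circle-bundle computation, your argument has no source for $\left\lceil (g_{i}-|v_{i}|)/2\right\rceil$.
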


\begin{corollary}
For the link $\L$ in Theorem \ref{d-invariant inequality}, if $\bm{v}\succeq \bm{g}$, then
$h(\bm{v})=0$
where $\bm{g}=(g_{1}, \cdots, g_{n})$.

\end{corollary}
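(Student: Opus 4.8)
The plan is to derive this corollary directly from Theorem \ref{d-invariant inequality}, combined with the non-negativity of the $h$-function. First I would unwind the partial order: the relation $\bm{v}\succeq \g$ means $v_{i}\geq g_{i}$ for every $i$. Since each $g_{i}$ is a genus and hence non-negative, this forces $|v_{i}|=v_{i}\geq g_{i}\geq 0$ for all $i$.

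Next I would evaluate $f_{g_{i}}(v_{i})$ under this hypothesis, splitting into the two branches of its definition. If $v_{i}>g_{i}$, then $|v_{i}|>g_{i}$ and the function returns $0$ outright. The only case warranting a moment's care is the boundary $v_{i}=g_{i}$: here $|v_{i}|=g_{i}\leq g_{i}$, so we land in the first branch and obtain $f_{g_{i}}(v_{i})=\left\lceil (g_{i}-g_{i})/2\right\rceil=\lceil 0\rceil=0$. Hence $f_{g_{i}}(v_{i})=0$ for every $i$, and therefore $\sum_{i=1}^{n}f_{g_{i}}(v_{i})=0$.

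Feeding this into Theorem \ref{d-invariant inequality} yields the upper bound $h(\bm{v})\leq 0$. To close the argument I would invoke the standard property that $h(\bm{v})\geq 0$ for all $\bm{v}\in\Z^{n}$, which is recorded in Section \ref{background} among the basic properties of the $h$-function. Combining the two inequalities gives $h(\bm{v})=0$, as claimed.

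I do not expect a genuine obstacle here; the corollary is essentially a one-line specialization of the theorem. The only points requiring mild attention are the boundary case $v_{i}=g_{i}$ (where one must confirm that the ceiling of zero is zero and that this value sits in the correct branch of $f_{g_{i}}$) and the explicit use of non-negativity of $h$, which is precisely what upgrades the one-sided bound supplied by Theorem \ref{d-invariant inequality} into the desired equality.
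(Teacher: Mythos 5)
Your proof is correct and takes essentially the same route as the paper, which treats this corollary as an immediate specialization of Theorem \ref{d-invariant inequality}: for $\bm{v}\succeq\bm{g}$ every $f_{g_{i}}(v_{i})$ vanishes (including the boundary case $v_{i}=g_{i}$), giving $h(\bm{v})\leq 0$, and nonnegativity upgrades this to equality. One small precision: the paper records nonnegativity only for the $H$-function, not for $h$ on all of $\Z^{n}$; but since $\bm{v}\succeq\bm{g}\succeq\bm{0}$ forces $H_{O}(\bm{v})=0$ and hence $h(\bm{v})=H_{\L}(\bm{v})\geq 0$, your appeal to $h\geq 0$ is justified exactly where you use it.
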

 
The proof of Theorem \ref{d-invariant inequality} is inspired by Rasmussen's argument for knots \cite{Ras}. We construct a non-positive definite Spin$^{c}$-cobordism from large surgeries on the link to the connected sum of circle bundles over closed, oriented surfaces of genera $g_{i}$.  Ozsv\'ath and Szab\'o  proved the  $d$-invariant inequality for a negative definite Spin$^{c}$-cobordism between rational homology spheres \cite{OSZ}. The $d$-invariant was generalized to \emph{standard} 3-manifolds, and 
Rasmussen proved the $d$-invariant inequality for a non-positive definite Spin$^{c}$-cobordism between standard 3-manifolds \cite{Lev, Ras} (see Subsection \ref{d-invariant}). We apply the result, and obtain the inequality between the $d$-invariants of large surgeries on the link and $d$-invariants of circle bundles. By using the $h$-function of the link to compute  $d$-invariants of large surgeries, we prove the inequality. 

\begin{theorem}
\label{unlink}
If $\L=L_{1}\cup \cdots \cup L_{n}\subset S^{3}$ is a (smoothly) slice $L$-space link, then $\L$ is an unlink. 
\end{theorem}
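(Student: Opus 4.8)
The plan is to combine Theorem \ref{d-invariant inequality} with the non-negativity of the $h$-function to force $h\equiv 0$, and then to argue that a vanishing $h$-function pins an $L$-space link down as the unlink. First I would record that a slice link automatically has vanishing pairwise linking numbers, so Theorem \ref{d-invariant inequality} applies, and sliceness means each component $L_i$ bounds a smoothly embedded disk, i.e. $g_i=0$. For $g_i=0$ the auxiliary function degenerates: $f_0(v_i)=0$ for every $v_i\in\Z$ (the case $|v_i|\le 0$ forces $v_i=0$ and gives $\lceil 0/2\rceil=0$, while $|v_i|>0$ gives $0$ outright). Hence Theorem \ref{d-invariant inequality} yields $h(\bm v)\le 0$ for all $\bm v\in\Z^n$. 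On the other hand, the $h$-function is always non-negative, $h(\bm v)\ge 0$, stabilizing to $0$ for large $\bm v$. Combining the two inequalities gives $h\equiv 0$ on $\Z^n$.

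Next I would descend to the components. The link $h$-function recovers the $h$-function of each component knot $L_i$ via the sublink/stabilization relation $h_{L_i}(v_i)=\lim_{v_j\to+\infty,\,j\ne i} h(\bm v)$; since $h\equiv 0$, each $h_{L_i}\equiv 0$. Every component of an $L$-space link is an $L$-space knot, and for an $L$-space knot the genus equals the degree of the symmetrized Alexander polynomial, which is itself read off from the $h$-function. A vanishing $h$-function therefore forces $\Delta_{L_i}=1$ and $g(L_i)=0$, so each $L_i$ is the unknot.

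The main obstacle is to upgrade ``every component is unknotted'' to ``$\L$ is an unlink,'' since unknotted components with vanishing linking numbers need not split (as the Whitehead link shows). Here I would use that for an $L$-space link the full invariant $HFL^-(\L)$ --- equivalently the Alexander polynomials of $\L$ and all its sublinks --- is determined by the $h$-function through the Gorsky--N\'emethi formula, so $h\equiv 0$ makes all of these coincide with those of the $n$-component unlink. In particular the Thurston norm of $S^3\setminus\L$, which link Floer homology detects, must vanish, and a link whose complement has trivial Thurston norm with unknotted, pairwise unlinked components is the unlink. I expect the delicate point to be invoking the correct detection statements (unknot detection for the components and the Thurston-norm detection theorem for the whole link) and confirming that the $h$-function genuinely recovers the sublink Alexander data in the $L$-space setting, rather than the elementary reduction carried out in the first two steps.
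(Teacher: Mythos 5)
Your first half reproduces the paper's opening move exactly: sliceness gives $g_{i}=0$, Theorem \ref{d-invariant inequality} gives $h(\bm{v})\leq 0$ for all $\bm{v}$, and nonnegativity of the $h$-function forces $h\equiv 0$. Your descent to components is also correct as far as it goes (components of $L$-space links are $L$-space knots, an $L$-space knot with trivial $h$-function has trivial Alexander polynomial, and genus equals the degree of the Alexander polynomial, so each $L_{i}$ is unknotted). But, as you yourself note, unknotted components with vanishing pairwise linking numbers are far from sufficient --- the Whitehead link satisfies both --- so the entire weight of the proof rests on your bridge from $h\equiv 0$ to vanishing Thurston norm, and that bridge has a genuine gap.

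The gap is that the Gorsky--N\'emethi correspondence you invoke (Theorem \ref{computation 4} together with Equations \eqref{computation of h-function 1} and \eqref{computation 3}) operates purely at the level of Euler characteristics, i.e.\ Alexander polynomials. From $h\equiv 0$ it yields only that $\chi(HFL^{-})$ of $\L$ and of all its sublinks agrees with that of the unlink --- and for $n\geq 2$ the unlink's multivariable Alexander polynomial is identically zero, so this conclusion is nearly vacuous; in particular it cannot feed into the Thurston-norm detection theorem of \cite{SZ6}, which reads the dual Thurston polytope off the support of the homology $\widehat{HFL}$, not off its Euler characteristic. There is no citable statement that the $h$-function determines the full $HFL^{-}$ of an $L$-space link; what the paper actually does at this point is a homological computation. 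By Proposition \ref{spectral sequence 1} the $E_{1}$-page is $\bigoplus_{B}H_{\ast}(A^{-}(\L,\bm{v}-\bm{e}_{B}))$ with each summand isomorphic to $\F[U]$ and differential determined by the $H$-function; when some $v_{j}>0$ one has $H(\bm{v}-\bm{e}_{B}-\bm{e}_{j})=H(\bm{v}-\bm{e}_{B})$ (since $H=H_{O}$), so the $E_{1}$-complex factors as a tensor product with an acyclic two-generator complex, whence $E_{2}=0$ and $HFL^{-}(\L,\bm{v})=0$; Proposition \ref{spectral sequence 2} then gives $\widehat{HFL}(\L,\bm{v})=0$ for such $\bm{v}$, and the symmetry \eqref{symmetry} confines the support of $\widehat{HFL}$ to $\bm{v}=\bm{0}$, at which point \cite{SZ6} applies. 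Note also that your closing geometric assertion (trivial Thurston norm plus unknotted, pairwise unlinked components implies unlink) is left unproved in your sketch; the paper avoids needing any such statement, or the unknottedness of components at all, by deducing directly from the point dual Thurston polytope that $\L$ bounds disjoint disks in $S^{3}$. So the proposal cannot be closed by citation alone: you must supply the spectral-sequence degeneration argument (or an equivalent computation of the support of $\widehat{HFL}$).
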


The idea of the proof goes as follows: the 4-genus for the slice link $\L$ is 0. By Theorem \ref{d-invariant inequality}, the $h$-function is identically 0. We compute the dual Thurston polytope of $\L$ by using the properties of $L$-space links and prove that $\L$ is an unlink. For details, see Subsection \ref{proof of unlink }.

As an application of the inequality in Theorem \ref{d-invariant inequality}, we can compare the following two sets. Let
$$\G(\L)=\lbrace \g= (g_{1}, g_{2}, \cdots, g_{n}) \mid g_{i}=g(\Sigma_{i}), \Sigma_{1}\sqcup \cdots \sqcup \Sigma_{n} \hookrightarrow B^{4}, \partial \Sigma_{i}=L_{i} \rbrace.$$
and
$$\G_{HF}(\L)=\lbrace \bm{v}=(v_{1}, \cdots, v_{n}) \mid h(\bm{v})=0  \textup{ and  } \bm{v}\succeq \bm{0} \rbrace.$$
The $4$-genus of the link $\L$  equals  $\min\limits_{\g\in \G(\L)}(g_{1}+\cdots+g_{n})$. By Theorem \ref{d-invariant inequality}, $\G(\L)\subseteq \G_{HF}(\L)$.

If $\L$ is an $L$-space link (see Definition \ref{definition of L-space link}), the $h$-function is explicitly determined by Alexander polynomials of the link and sublinks \cite[Section 3.3]{Boro}. We can describe the set $\G_{HF}(\L)$ in terms of Alexander polynomials explicitly (see Lemma \ref{determine Alexander polynomial}). Moreover, let $p_{i}, q_{i}$ be coprime positive integers where $1\leq i\leq n$, and let $L_{(p_{i}, q_{i})}$ denote the $(p_{i}, q_{i})$-cable of $L_{i}$. Then the link $\L_{cab}=L_{(p_{1}, q_{1})} \cup \cdots \cup L_{(p_{n}, 	q_{n})}$ is also an $L$-space link if $q_{i}/p_{i}$ is sufficiently large for each $1\leq i\leq n$, \cite[Proposition 2.8]{Boro}. For example, let $\L$ denote the 2-bridge link $b(4k^{2}+4k, -2k-1)$ which is an $L$-space link \cite{Liu}. Then for sufficiently large surgery coefficients, $\L_{cab}$ is also an $L$-space link, and $\G(\L_{cab})=\G_{HF}(\L_{cab})$ is shown as in Figure \ref{cable of links}. For details, see Section \ref{examples}. 

\begin{figure}[H]
\centering
\includegraphics[width=2.5in]{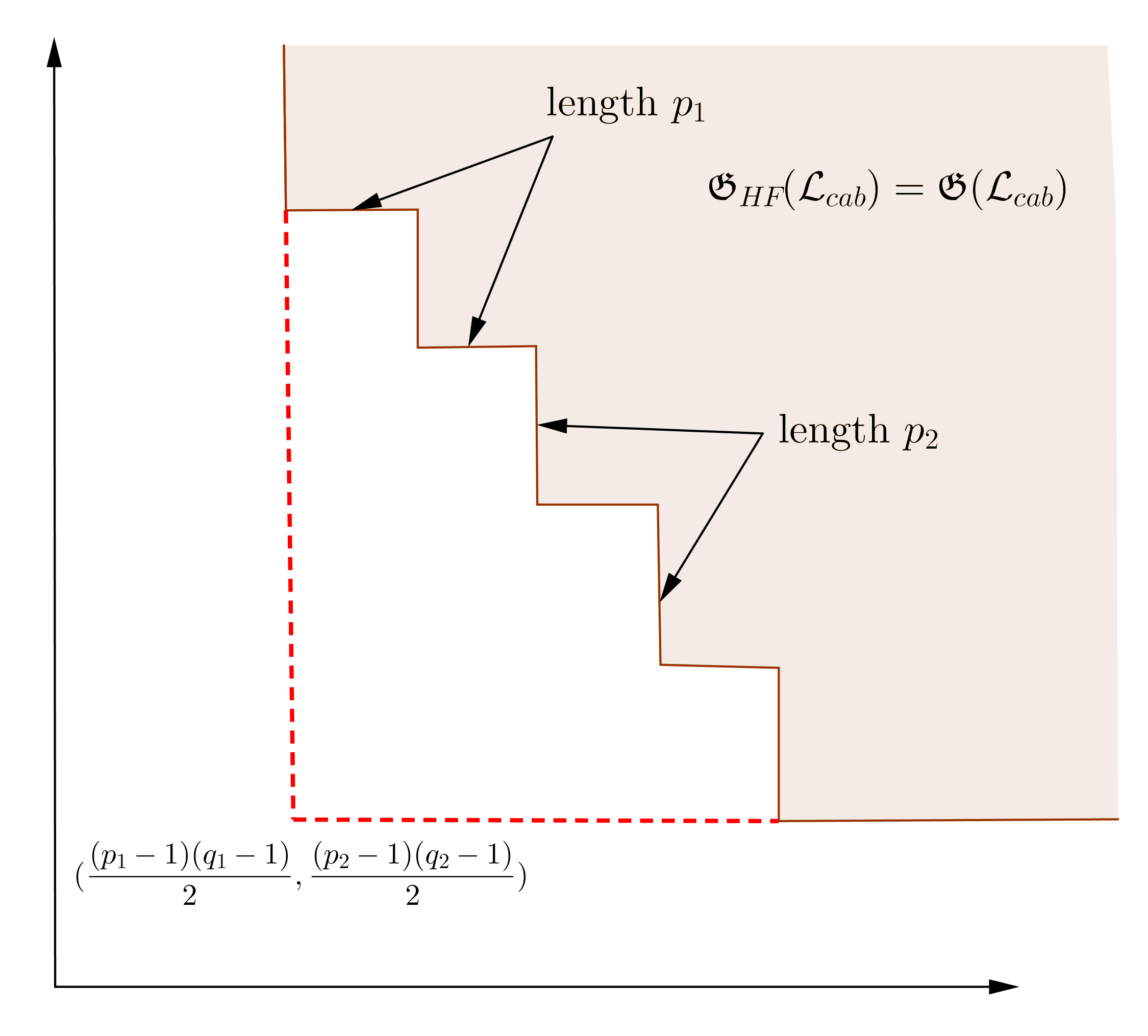}
\caption{The set $\G(\L_{cab})$ for the cable link  \label{cable of links}}

\end{figure} 

\begin{proposition}
\label{cable sharp}
If $\L\subset S^{3}$ is an $L$-space link such that $\G(\L)=\G_{HF}(\L)$, then for sufficiently large cables, $\L_{cab}$ also satisfies that $\G(\L_{cab})=\G_{HF}(\L_{cab})$. 

\end{proposition}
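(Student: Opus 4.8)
The plan is to prove the two inclusions separately, as in the definition of the two sets. One direction is immediate: since $q_{i}/p_{i}$ is taken large, $\L_{cab}$ is again an $L$-space link by \cite[Proposition 2.8]{Boro}, so Theorem~\ref{d-invariant inequality} applies to it and yields $\G(\L_{cab})\subseteq \G_{HF}(\L_{cab})$. Thus the entire content of the proposition is the reverse inclusion $\G_{HF}(\L_{cab})\subseteq \G(\L_{cab})$: every genus tuple permitted by the $h$-function of the cable must be realized by a system of pairwise disjoint surfaces in $B^{4}$.

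The first main step is to relate the $h$-function of $\L_{cab}$ to that of $\L$. For an $L$-space link the $h$-function is read off from the (sub)link Alexander polynomials via Lemma~\ref{determine Alexander polynomial} and \cite[Section 3.3]{Boro}, and the multivariable Alexander polynomial of a cable is governed by a classical cabling formula together with the torus-knot data $(p_{i},q_{i})$. Combining these I would derive an explicit description of $\G_{HF}(\L_{cab})$ in terms of $\G_{HF}(\L)$; concretely I expect $\G_{HF}(\L_{cab})$ to be an affine rescaling of $\G_{HF}(\L)$, with the $i$-th coordinate transformed by $g_{i}\mapsto p_{i}g_{i}+(p_{i}-1)(q_{i}-1)/2$, followed by passage to the upward closure. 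The hypothesis that $q_{i}/p_{i}$ be large is exactly what makes the cable an $L$-space link and what makes this correspondence clean, with the torus-knot contribution entering as a fixed shift $(p_{i}-1)(q_{i}-1)/2$.

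The second main step is geometric realization. By hypothesis $\G(\L)=\G_{HF}(\L)$, so for each minimal (corner) tuple $\bm{g}=(g_{1},\dots,g_{n})$ of the staircase region $\G_{HF}(\L)$ there exist pairwise disjoint surfaces $\Sigma_{i}\subset B^{4}$ of genus $g_{i}$ with $\partial\Sigma_{i}=L_{i}$. For each $i$ I would build a surface bounded by the cable component $L_{(p_{i},q_{i})}$ by the standard satellite construction: inside a tubular neighborhood $\Sigma_{i}\times D^{2}\subset B^{4}$ take $p_{i}$ parallel pushoffs of $\Sigma_{i}$ and glue in the Seifert cobordism of the torus pattern $T_{p_{i},q_{i}}$, producing a surface of genus $p_{i}g_{i}+(p_{i}-1)(q_{i}-1)/2$. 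Disjointness of the $\Sigma_{i}$ (in disjoint tubular neighborhoods) guarantees disjointness of the cabled surfaces. This realizes precisely the corner tuples of $\G_{HF}(\L_{cab})$ identified in the first step, and since both $\G$ and $\G_{HF}$ are upward-closed (the latter because the $h$-function is non-negative and non-increasing in each variable), realizing all corners gives $\G_{HF}(\L_{cab})\subseteq \G(\L_{cab})$, completing the argument.

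The step I expect to be the main obstacle is matching the two computations exactly across the first and second steps: I must show that the genus $p_{i}g_{i}+(p_{i}-1)(q_{i}-1)/2$ produced by the surface construction is literally the minimal value permitted by $h_{\L_{cab}}$, with no gap between the $h$-function lower bound and the geometric upper bound. This requires the cabling description of $\G_{HF}$ to be sharp in the large-$q_{i}/p_{i}$ regime, and it is where the $L$-space hypothesis and the explicit Alexander-polynomial formula for $h$ do the real work. By contrast, verifying upward-closedness and that corner tuples of $\G_{HF}(\L_{cab})$ pull back to corner tuples of $\G_{HF}(\L)$ should be a comparatively routine monotonicity argument for the $h$-function.
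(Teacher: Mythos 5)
Your proposal is correct and follows essentially the same route as the paper: the inclusion $\G(\L_{cab})\subseteq\G_{HF}(\L_{cab})$ from Theorem \ref{d-invariant inequality}, the identification of $\G_{HF}(\L_{cab})$ as the upward closure of the image of $\G_{HF}(\L)$ under the affine map $\bm{s}\mapsto\bigl(p_{i}s_{i}+(p_{i}-1)(q_{i}-1)/2\bigr)_{i}$ via Turaev's cabling formula (the paper's Theorem \ref{cable property}), and the geometric realization by cabling the disjoint surfaces, which is the paper's Lemma \ref{geometry property}. The step you flag as the main obstacle is exactly the content of Theorem \ref{cable property}, and your expected transformation $T$ matches it precisely.
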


\medskip
{\bf Organization of the paper.} In  Subsection \ref{h-function general} and   Subsection \ref{d-invariant}, we review the definitions of   the $h$-function for links in $S^{3}$ and the $d$-invariants for standard 3-manifolds. In Subsection \ref{h-function}, we review the definition of $L$-space links, and the explicit formula to compute the $h$-function in terms of the Alexander polynomials of the link and sublinks. In Subsection \ref{heegaard floer homology}, we review the Heegaard Floer link homologies  of $L$-space links.   In Section \ref{main proof}, we prove Theorem \ref{d-invariant inequality}, Theorem \ref{unlink}, and give some lower bounds for the 4-genera of links. In Section \ref{examples}, we show some examples of $L$-space links including the 2-bridge links $b(4k^{2}+4k, -2k-1)$ where $k$ is some positive integer, and prove that $\G(\L)=\G_{HF}(\L)$ in these examples. Then the 4-genus is determined by  the Alexander polynomials. We also show the proof of Proposition \ref{cable sharp}.

\medskip
{\bf Notations and Conventions.} In this paper,  all the links are assumed to be oriented. We use $\L$ to denote a link in $S^{3}$, and  $L_{1}, \cdots, L_{n}$  to denote the link components. Then $\L_{1}$ and $\L_{2}$ denote different links in $S^{3}$, and $L_{1}$ and $L_{2}$ denote different components in the same link. We denote  vectors in the $n$-dimension lattice $\mathbb{Z}^{n}$ by bold letters. For two vectors $\bm{u}=(u_{1}, u_{2}, \cdots, u_{n})$ and $\bm{v}=(v_{1}, \cdots, v_{n})$ in $\mathbb{Z}^{n}$,  we write $\bm{u}\preceq \bm{v}$  if $u_{i}\leq v_{i}$ for each $1\leq i\leq n$, and $\bm{u}\prec \bm{v}$ if $\bm{u}\preceq \bm{v}$ and $\bm{u}\neq \bm{v}$.  Let $\bm{e}_{i}$ denote a vector in $\Z^{n}$ where the $i$th-entry is 1 and other entries are 0. For a subset $B\subset \lbrace 1, \cdots, n \rbrace$, let $\bm{e}_{B}=\sum_{i\in B} \bm{e}_{i}$. Similarly, we use $L_{B}\subset \L$ to denote the sublink $\cup_{i\in B} L_{i}$. Assume $\lbrace 1, \cdots, n\rbrace-B=\lbrace i_{1}, \cdots, i_{k} \rbrace$. 
For $\bm{y}=(y_{1}, \cdots, y_{n})\in \Z^{n}$, let $\bm{y}\setminus y_{B}=(y_{i_{1}}, \cdots, y_{i_{k}})$.  Let $\Delta_{\L}(t_{1}, \cdots, t_{n})$ denote the symmetrized Alexander polynomial of $\L$. Throughout this paper, we work over the field $\mathbb{F}=\mathbb{Z}/ 2\mathbb{Z}$.

\medskip
{\bf Acknowledgements.} I deeply appreciate Eugene Gorsky for introducing this interesting question to me and his patient guidance and helpful discussions during the project. I am also grateful to Allison Moore, Robert Lipschtiz, Jacob Rasmussen, Zhongtao Wu for useful discussions. The project is partially supported by NSF grant DMS-1700814.

\section{Background}
\label{background}

\subsection{The $h$-function}
\label{h-function general}
Ozsv\'ath and Szab\'o associated chain complexes $CF^{-}(M), \widehat{CF}(M),$ $ CF^{\infty}(M)$ and $CF^{+}(M)$ to an admissible  Heegaard diagram for a closed oriented connected 3-manifold $M$ \cite{SZ2}. The homologies of these chain complexes are called Heegaard Floer homologies $HF^{-}(M), \widehat{HF}(M), HF^{\infty}(M)$ and $HF^{+}(M)$, which are 3-manifold invariants. A  link $\L=L_{1}\cup \cdots \cup L_{n}$ in  $M$ defines a filtration on the link Floer complex $CF^{-}(M)$ \cite{Cip, SZ3}. For links in $S^{3}$, the filtration is indexed by an $n$-dimensional lattice $\mathbb{H}$ which is defined as follows: 

\begin{definition}
For an oriented link $\L=L_{1}\cup \cdots \cup L_{n}\subset S^{3}$, define $\mathbb{H}(\L)$ to be an affine lattice over $\Z^{n}$:
$$\H(\L)=\oplus_{i=1}^{n}\H_{i}(\L), \quad \H_{i}(\L)=\Z+\dfrac{lk(L_{i}, \L\setminus L_{i})}{2}$$
where $lk(L_{i}, \L\setminus L_{i})$ denotes the linking number of $L_{i}$ and $\L\setminus L_{i}$. 
\end{definition} 

Given $\bm{s}=(s_{1}, \cdots, s_{n})\in \H(\L)$,  the \emph{generalized Heegaard Floer complex} $A^{-}(\L, s)$ is defined to be a subcomplex of $CF^{-}(S^{3})$ corresponding to the filtration indexed by $\bm{s}$ \cite{Cip}. For $\bm{v}\preceq \bm{s}$, $A^{-}(\L, \bm{v})\subseteq A^{-}(\L, \bm{s})$. The link homology $HFL^{-}$ is defined as the homology of the associated graded complex:
\begin{equation}
\label{link Floer}
HFL^{-}(\L, \bm{s})=H_{\ast}\left(  A^{-}(\L, \bm{s})/ \sum_{\bm{v}\prec \bm{s}}A^{-}(\L, \bm{v})\right).
\end{equation}
The complex $A^{-}(\L, \bm{s})$ is a finitely generated module over the polynomial ring $\F[U_{1},\cdots, U_{n}]$ where the action of  $U_{i}$ drops the homological grading by 2 and drops the $i$-th filtration $A_{i}$ by 1  \cite{SZ3}. Hence, $U_{i}A^{-}(\L, \bm{s})\subseteq A^{-}(\L, \bm{s}-\bm{e}_{i})$. All the actions $U_{i}$ are homotopic to each other on each $A^{-}(\L, \bm{s})$, and the homology of $A^{-}(\L, \bm{s})$ can be regarded as a $\F[U]$-module where $U$ acts as $U_{1}$ \cite{Gor, SZ3}.

By the large surgery theorem \cite{Cip} , the homology of $A^{-}(\L, \bm{s})$ is isomorphic to the Heegaard Floer homology  of a large surgery on the link $\L$ equipped with some Spin$^{c}$-structure as a $\F[U]$-module \cite{Cip}. Then the homology of $A^{-}(\L, \bm{s})$ consists of the free part which is a direct sum of one copy of $\F[U]$ and some $U$-torsion. 

\begin{definition}\cite[Definition 3.9]{Boro}
For an oriented link $\L\subseteq S^{3}$, we define the $H$-function $H_{\L}(\bm{s})$ by saying that $-2H_{\L}(\bm{s})$ is the maximal homological degree of the free part of $H_{\ast}(A^{-}(\L, \bm{s}))$ where $\bm{s}\in \H$. 

\end{definition}

\begin{lemma}\cite[Proposition 3.10]{Boro}
\label{h-function increase}
For an oriented link $\L\subseteq S^{3}$, the $H$-function $H_{\L}(\bm{s})$ takes nonnegative values, and $H_{\L}(\bm{s}-\bm{e}_{i})=H_{\L}(\bm{s})$ or $H_{\L}(\bm{s}-\bm{e}_{i})=H_{\L}(\bm{s})+1$ where $\bm{s}\in \H$. 

\end{lemma}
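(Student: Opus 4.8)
The plan is to exploit two structural maps relating $A^{-}(\L,\bm{s})$ and $A^{-}(\L,\bm{s}-\bm{e}_{i})$, together with the description of $H_{\ast}(A^{-}(\L,\bm{s}))$ as one free summand $\F[U]$ plus $U$-torsion. Throughout write $H=H_{\L}(\bm{s})$ and $H'=H_{\L}(\bm{s}-\bm{e}_{i})$, and let $x_{\bm{s}}$ and $x_{\bm{s}-\bm{e}_{i}}$ denote generators of the free summands of $H_{\ast}(A^{-}(\L,\bm{s}))$ and $H_{\ast}(A^{-}(\L,\bm{s}-\bm{e}_{i}))$; by definition these lie in homological degrees $-2H$ and $-2H'$ respectively.

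For nonnegativity I would use the grading-preserving inclusion $\iota\colon A^{-}(\L,\bm{s})\hookrightarrow CF^{-}(S^{3})$. It becomes an isomorphism after inverting $U$, since $CF^{\infty}(S^{3})$ is insensitive to the filtration level; consequently $\iota_{\ast}$ is injective on the free summand. Thus $\iota_{\ast}(x_{\bm{s}})$ is a nonzero homogeneous element of $HF^{-}(S^{3})\cong\F[U]$, whose free part has top degree $0$ (matching the normalization $H_{\L}=0$ for $\bm{s}$ large, where $A^{-}(\L,\bm{s})=CF^{-}(S^{3})$). Hence $\iota_{\ast}(x_{\bm{s}})=U^{k}\cdot(\text{top generator})$ for some $k\geq 0$, and comparing degrees gives $-2H=-2k$, so $H_{\L}(\bm{s})=k\geq 0$.

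For the step inequality I would use the degree-preserving inclusion $j\colon A^{-}(\L,\bm{s}-\bm{e}_{i})\hookrightarrow A^{-}(\L,\bm{s})$ and the map $U_{i}\colon A^{-}(\L,\bm{s})\to A^{-}(\L,\bm{s}-\bm{e}_{i})$, which drops homological degree by $2$. The composites $U_{i}\circ j$ and $j\circ U_{i}$ are exactly the $U_{i}$-actions on $A^{-}(\L,\bm{s}-\bm{e}_{i})$ and $A^{-}(\L,\bm{s})$, and since $U_{i}\simeq U$ these act as multiplication by $U$ on homology. From $(U_{i}\circ j)_{\ast}(x_{\bm{s}-\bm{e}_{i}})=U\,x_{\bm{s}-\bm{e}_{i}}\neq 0$, together with the analogous computation after applying $U^{n}$, the class $j_{\ast}(x_{\bm{s}-\bm{e}_{i}})$ is non-torsion and hence equals $U^{k}x_{\bm{s}}$ with $k\geq 0$; comparing degrees yields $-2H'=-2H-2k$, i.e. $H'=H+k\geq H$. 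Symmetrically, $(j\circ U_{i})_{\ast}(x_{\bm{s}})=U\,x_{\bm{s}}\neq 0$ forces $(U_{i})_{\ast}(x_{\bm{s}})$ to be non-torsion, so it equals $U^{m}x_{\bm{s}-\bm{e}_{i}}$ with $m\geq 0$; since $(U_{i})_{\ast}$ lowers degree by $2$, this gives $-2H-2=-2H'-2m$, i.e. $H'=H+1-m\leq H+1$. Combining, $H\leq H'\leq H+1$, which is the claim.

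The main point to pin down carefully is that $j_{\ast}$ and $(U_{i})_{\ast}$ send free generators to non-torsion classes rather than into the $U$-torsion of the target; this is exactly where the homotopy $U_{i}\simeq U$ and the freeness of $U$ on the $\F[U]$-summand are used, and where one must invoke that the free summand really is a single copy of $\F[U]$ as supplied by the large surgery theorem.
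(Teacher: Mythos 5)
Your proposal is correct and is essentially the standard argument: the paper itself gives no proof of this lemma, quoting it from \cite[Proposition 3.10]{Boro}, and the proof there is exactly your two-map trick --- the inclusion $j\colon A^{-}(\L,\bm{s}-\bm{e}_{i})\hookrightarrow A^{-}(\L,\bm{s})$ and the map $U_{i}$ compose (up to the homotopy $U_{i}\simeq U$) to multiplication by $U$, so on the free $\F[U]$-summands they act as $U^{a}$ and $U^{b}$ with $a,b\geq 0$ and $a+b=1$, with nonnegativity pinned down by the normalization $H_{\L}(\bm{s})=0$ for $\bm{s}$ sufficiently large. The only cosmetic slip is that $j_{\ast}(x_{\bm{s}-\bm{e}_{i}})$ and $(U_{i})_{\ast}(x_{\bm{s}})$ equal $U^{k}x_{\bm{s}}$ and $U^{m}x_{\bm{s}-\bm{e}_{i}}$ only modulo $U$-torsion, which does not affect your graded degree comparison since the free/torsion decomposition can be taken graded.
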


For an $n$-component link $\L$ with vanishing pairwise linking numbers, $\H(\L)=\Z^{n}$. The $h$-function $h_{\L}(\bm{s})$  is defined as 
$$h_{\L}(\bm{s})=H_{\L}(\bm{s})-H_{O}(\bm{s})$$
where $O$ denotes the unlink with $n$ components, and $\bm{s}\in \Z^{n}$. Recall that for split links $\L$, the $H$-function 
$H(\L, \bm{s})=H_{L_{1}}(s_{1})+\cdots +H_{L_{n}}(s_{n})$ where $H_{L_{i}}(s_{i})$ is the $H$-function of the link component $L_{i}$, \cite[Proposition 3.11]{Boro}.  Then $H_{O}(\bm{s})=H(s_{1})+\cdots H(s_{n})$ where $H(s_{i})$ denotes the $H$-function of the unknot. More precisely, $H_{O}(\bm{s})=\sum_{i=1}^{n}(|s_{i}|-s_{i})/2$.  Then $H_{\L}(\bm{s})=h_{\L}(\bm{s})$ for all $\bm{s}\succeq \bm{0}$.

For the rest of this subsection, we use $\L=L_{1}\cup \cdots \cup L_{n}\subset S^{3}$ to denote links with vanishing pairwise linking numbers.  Consider the set 
$$\G_{HF}(\L)=\lbrace \bm{s}=(s_{1}, \cdots, s_{n})\in \Z^{n} \mid h(\bm{s}) =0, \bm{s}\succeq \bm{0} \rbrace.$$
We obtain the following properties of the set $\G_{HF}(\L)$. 

\begin{lemma}
\label{shape 1}
If $\bm{x}\in \G_{HF}(\L)$ and $\bm{y}\succeq \bm{x}$, then $\bm{y}\in \G_{HF}(\L)$. Equivalently, if $\bm{x}\notin \G_{HF}(\L)$ and $\bm{y}\preceq \bm{x}$, then $\bm{y}\notin \G_{HF}(\L)$. 

\end{lemma}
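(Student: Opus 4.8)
The plan is to reduce the statement to a monotonicity property of the $H$-function and then invoke Lemma \ref{h-function increase}. First I would record the key simplification that on the nonnegative octant the $h$-function agrees with the $H$-function: since $H_{O}(\bm{s})=\sum_{i=1}^{n}(|s_{i}|-s_{i})/2$ vanishes whenever $\bm{s}\succeq\bm{0}$, we have $h(\bm{s})=H_{\L}(\bm{s})$ for every $\bm{s}\succeq\bm{0}$. Thus membership of $\bm{s}$ in $\G_{HF}(\L)$ is equivalent to requiring both $\bm{s}\succeq\bm{0}$ and $H_{\L}(\bm{s})=0$.

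Next I would establish weak monotonicity of $H_{\L}$: if $\bm{u}\preceq\bm{w}$, then $H_{\L}(\bm{w})\leq H_{\L}(\bm{u})$. This follows from Lemma \ref{h-function increase}, which gives $H_{\L}(\bm{s}-\bm{e}_{i})\in\lbrace H_{\L}(\bm{s}),\, H_{\L}(\bm{s})+1\rbrace$, so that increasing any single coordinate cannot increase the value of $H_{\L}$. Moving from $\bm{u}$ to $\bm{w}$ one coordinate at a time along a lattice path (which exists precisely because $\bm{u}\preceq\bm{w}$) and applying this inequality at each step yields $H_{\L}(\bm{w})\leq H_{\L}(\bm{u})$.

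With these two ingredients the main direction is immediate. Given $\bm{x}\in\G_{HF}(\L)$ and $\bm{y}\succeq\bm{x}$, we have $\bm{y}\succeq\bm{x}\succeq\bm{0}$, hence $h(\bm{y})=H_{\L}(\bm{y})$. Monotonicity gives $H_{\L}(\bm{y})\leq H_{\L}(\bm{x})=h(\bm{x})=0$, and since $H_{\L}$ takes nonnegative values (again by Lemma \ref{h-function increase}) we conclude $H_{\L}(\bm{y})=0$, i.e. $\bm{y}\in\G_{HF}(\L)$.

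Finally I would verify that the contrapositive formulation is genuinely equivalent: the displayed second statement is the contrapositive of the first after relabeling, since if $\bm{x}\notin\G_{HF}(\L)$ and $\bm{y}\preceq\bm{x}$ then $\bm{y}\in\G_{HF}(\L)$ would force $\bm{x}\in\G_{HF}(\L)$ by the direction just proved, a contradiction. I do not expect a genuine obstacle here; the whole statement is a formal consequence of the monotonicity and nonnegativity of $H_{\L}$. The only point deserving a word of care is that $\bm{y}$ in the contrapositive need not a priori satisfy $\bm{y}\succeq\bm{0}$, but this causes no trouble, as a vector with a negative coordinate is excluded from $\G_{HF}(\L)$ by definition, so the conclusion $\bm{y}\notin\G_{HF}(\L)$ holds trivially in that case and through the argument above otherwise.
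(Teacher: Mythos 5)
Your proof is correct and is exactly the argument the paper intends: the paper's proof consists of the single line ``straightforward from Lemma \ref{h-function increase},'' and your writeup simply spells out the monotonicity of $H_{\L}$ from that lemma, the identification $h=H_{\L}$ on the nonnegative octant, and the nonnegativity of $H_{\L}$. No gaps; your careful handling of the contrapositive (vectors with a negative coordinate being excluded by definition) is a reasonable extra check but not a point of divergence.
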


\begin{proof}
This is straightforward from Lemma \ref{h-function increase}. 
\end{proof}

\begin{lemma}
\label{shape 2}
If $\bm{s}=(s_{1}, \cdots, s_{n})\in \G_{HF}(\L)$, then $\bm{s}\setminus s_{i}\in \G_{HF}(\L\setminus L_{i})$ for all $1\leq i\leq n$. Moreover, if $\bm{s}\setminus s_{i}\in \G_{HF}(\L\setminus L_{i})$, then for $s_{i}$ sufficiently large, $\bm{s}=(s_{1}, \cdots, s_{n})\in \G_{HF}(\L)$.

\end{lemma}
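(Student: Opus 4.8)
The plan is to reduce both assertions to statements about the $H$-function and then invoke a stabilization property identifying the large-$s_i$ behavior of $H_{\L}$ with the $H$-function of the sublink $\L\setminus L_i$. First I would observe that membership in $\G_{HF}$ forces $\bm{s}\succeq \bm{0}$, and on this region $H_{O}(\bm{s})=0$, so $h_{\L}(\bm{s})=H_{\L}(\bm{s})$; the same holds for $\L\setminus L_i$ at $\bm{s}\setminus s_i\succeq \bm{0}$. Thus $\bm{s}\in \G_{HF}(\L)$ iff $\bm{s}\succeq\bm{0}$ and $H_{\L}(\bm{s})=0$, while $\bm{s}\setminus s_i\in \G_{HF}(\L\setminus L_i)$ iff $\bm{s}\setminus s_i\succeq\bm{0}$ and $H_{\L\setminus L_i}(\bm{s}\setminus s_i)=0$. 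Both conditions become vanishing of the nonnegative $H$-function.

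The key input is the stabilization of the $H$-function: for $s_i$ sufficiently large (with the remaining coordinates fixed),
$$H_{\L}(s_1,\cdots,s_n)=H_{\L\setminus L_i}(\bm{s}\setminus s_i).$$
This reflects the fact that as $s_i\to\infty$ the $i$-th filtration level is pushed above the top of the link filtration, so the generalized Floer complex $A^{-}(\L,\bm{s})$ agrees, up to filtered quasi-isomorphism, with $A^{-}(\L\setminus L_i,\bm{s}\setminus s_i)$; in other words, forgetting the $i$-th component corresponds to sending the $i$-th filtration parameter to $+\infty$. I would either cite this stabilization directly from \cite{Boro} or establish it from the description of $A^{-}$ as the subcomplex of $CF^{-}(S^{3})$ cut out by the link filtration.

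Given this, the first assertion follows by combining stabilization with monotonicity. Assume $\bm{s}\in\G_{HF}(\L)$, so $\bm{s}\succeq\bm{0}$ and $H_{\L}(\bm{s})=0$, and note $\bm{s}\setminus s_i\succeq \bm{0}$. By Lemma \ref{h-function increase} we have $H_{\L}(\bm{s}-\bm{e}_i)\geq H_{\L}(\bm{s})$, so $H_{\L}$ is non-increasing along the $i$-th coordinate; being also nonnegative and equal to $0$ at $\bm{s}$, it satisfies $H_{\L}(\bm{s}+k\bm{e}_i)=0$ for all $k\geq 0$. Choosing $k$ large enough that stabilization applies gives $H_{\L\setminus L_i}(\bm{s}\setminus s_i)=H_{\L}(\bm{s}+k\bm{e}_i)=0$, hence $\bm{s}\setminus s_i\in\G_{HF}(\L\setminus L_i)$. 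For the ``moreover'' direction, suppose $\bm{s}\setminus s_i\in\G_{HF}(\L\setminus L_i)$, so $\bm{s}\setminus s_i\succeq\bm{0}$ and $H_{\L\setminus L_i}(\bm{s}\setminus s_i)=0$. Taking $s_i$ large enough (in particular $s_i\geq 0$, so $\bm{s}\succeq\bm{0}$) for stabilization to hold yields $H_{\L}(\bm{s})=H_{\L\setminus L_i}(\bm{s}\setminus s_i)=0$, whence $\bm{s}\in\G_{HF}(\L)$.

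The main obstacle is justifying the stabilization identity. The set-theoretic bookkeeping is immediate from Lemma \ref{h-function increase} together with nonnegativity, but identifying $\lim_{s_i\to\infty}H_{\L}(\bm{s})$ with the sublink $H$-function requires understanding how $A^{-}(\L,\bm{s})$ degenerates to the generalized Floer complex of $\L\setminus L_i$ once the $i$-th filtration is saturated. This is the one step where genuine input about link Floer homology is needed, and where I would rely on the sublink and stabilization results of \cite{Boro}.
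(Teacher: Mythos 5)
Your proposal is correct and follows essentially the same route as the paper: both reduce membership in $\G_{HF}$ to vanishing of the nonnegative $H$-function (using $h_{\L}=H_{\L}$ on $\bm{s}\succeq\bm{0}$), propagate the zero along the $i$-th coordinate via the monotonicity in Lemma \ref{h-function increase}, and conclude with the stabilization identity $H_{\L}(\bm{s}+t\bm{e}_{i})=H_{\L\setminus L_{i}}(\bm{s}\setminus s_{i})$ for large $t$, which the paper cites as \cite[Proposition 3.12]{Boro}. The stabilization step you flag as the one piece of genuine Floer-theoretic input is indeed exactly the external result the paper invokes, so no gap remains.
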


\begin{proof}
For an oriented link $\L$, there exists a natural forgetful map $\pi_{i}: \H(\L)\rightarrow \H(\L\setminus L_{i})$ \cite{Cip}. If $\L$ has vanishing pairwise linking numbers, $\pi_{i}(\bm{s})=\bm{s}\setminus s_{i}$ where $\bm{s}\in \Z^{n}$. Suppose that $\bm{s}\in \G_{HF}(\L)$. Then $h_{\L}(\bm{s})=H_{\L}(\bm{s})=0$. By Lemma \ref{h-function increase}, $H_{\L}(\bm{s}+t\bm{e}_{i})=0$ for all $i$ and $t>0$. Recall that  $H_{\L}(\bm{s}+t\bm{e}_{i})=H_{\L\setminus L_{i}}(\bm{s}\setminus s_{i})$ for sufficiently large $t$, \cite[Proposition 3.12]{Boro}. Then $H_{\L\setminus L_{i}}(\bm{s}\setminus s_{i})=0$. Thus, $\bm{s}\setminus s_{i}\in \G_{HF}(\L\setminus L_{i})$. 

Conversely, if $\bm{s}\setminus s_{i}\in \G_{HF}(\L\setminus L_{i})$, and $s_{i}$ is sufficiently large, then $H_{\L}(\bm{s})=H_{\L\setminus L_{i}}(\bm{s}\setminus s_{i})=0$, which implies that $\bm{s}\in \G_{HF}(\L)$.

\end{proof}

\begin{definition}
\label{maximal lattice point}
A lattice point $\bm{s}\in \Z^{n}$ is \emph{maximal} if $\bm{s}\notin \G_{HF}(\L)$, but $\bm{s}+\bm{e}_{i}\in \G_{HF}(\L)$ for all $1\leq i \leq n$. 

\end{definition}

\begin{lemma}
\label{shape 3}
The set $\G_{HF}(\L)$ is determined by the set of maximal lattice points and $\G_{HF}(\L\setminus L_{i})$ for all $1\leq i\leq n$.

\end{lemma}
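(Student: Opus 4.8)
The plan is to turn ``determined by'' into an explicit reconstruction formula and then verify it. Write $\mathcal{M}$ for the set of maximal lattice points of Definition \ref{maximal lattice point}, and define
\[
G'=\Bigl\{\,\bm{s}\succeq\bm{0}\ :\ \bm{s}\not\preceq\bm{t}\ \text{for all}\ \bm{t}\in\mathcal{M},\ \text{and}\ \bm{s}\setminus s_{i}\in\G_{HF}(\L\setminus L_{i})\ \text{for all}\ 1\le i\le n\,\Bigr\}.
\]
Since $G'$ is described purely in terms of $\mathcal{M}$ and the sets $\G_{HF}(\L\setminus L_{i})$, it suffices to prove $G'=\G_{HF}(\L)$, and I would do this by the two inclusions.

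For $\G_{HF}(\L)\subseteq G'$: let $\bm{s}\in\G_{HF}(\L)$. By Lemma \ref{shape 2} every projection $\bm{s}\setminus s_{i}$ lies in $\G_{HF}(\L\setminus L_{i})$. Moreover $\bm{s}$ can lie below no maximal lattice point, for if $\bm{s}\preceq\bm{t}$ with $\bm{t}\in\mathcal{M}$ then $\bm{t}\notin\G_{HF}(\L)$, and the contrapositive of Lemma \ref{shape 1} would give $\bm{s}\notin\G_{HF}(\L)$, a contradiction. Hence $\bm{s}\in G'$.

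For $G'\subseteq\G_{HF}(\L)$, I would argue by contradiction: suppose $\bm{s}\in G'$ but $\bm{s}\notin\G_{HF}(\L)$, and set $U=\{\,\bm{s}'\succeq\bm{s}:\bm{s}'\notin\G_{HF}(\L)\,\}$, which contains $\bm{s}$. First note that every $\bm{s}'\in U$ still has all projections good: since $\bm{s}'\setminus s_{i}'\succeq\bm{s}\setminus s_{i}$ and $\G_{HF}(\L\setminus L_{i})$ is upward closed (Lemma \ref{shape 1} applied to the sublink), $\bm{s}'\setminus s_{i}'\in\G_{HF}(\L\setminus L_{i})$. The key point is that $U$ is finite: by the stabilization of the $H$-function \cite[Proposition 3.12]{Boro} there is a uniform bound $b$, independent of the remaining coordinates, such that $s_{i}'\ge b$ forces $H_{\L}(\bm{s}')=H_{\L\setminus L_{i}}(\bm{s}'\setminus s_{i}')=0$, i.e.\ $\bm{s}'\in\G_{HF}(\L)$; thus every coordinate of a point of $U$ is $<b$, so $U\subseteq\{\,\bm{s}'\succeq\bm{s}:s_{i}'<b\ \text{for all}\ i\,\}$ is finite. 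Choosing a maximal element $\bm{t}$ of $U$, we have $\bm{t}\notin\G_{HF}(\L)$, while for each $i$ the point $\bm{t}+\bm{e}_{i}\succ\bm{t}\succeq\bm{s}$ does not lie in $U$ and hence lies in $\G_{HF}(\L)$. Therefore $\bm{t}\in\mathcal{M}$ with $\bm{s}\preceq\bm{t}$, contradicting $\bm{s}\in G'$. This gives $\bm{s}\in\G_{HF}(\L)$ and completes the reconstruction.

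The only genuinely nonformal step, and the one I expect to be the main obstacle, is the finiteness of $U$, i.e.\ producing a maximal lattice point above any point of the complement whose projections are all good; everything else is bookkeeping with the monotonicity of Lemma \ref{shape 1} and the forgetful maps of Lemma \ref{shape 2}. I would resolve it using the uniform stabilization bound for the $H$-function (uniform in the remaining coordinates), equivalently the fact that the $h$-function vanishes for all sufficiently large $\bm{s}$; this prevents the complement from running off to infinity in any direction in which the corresponding sublink projection is already good, and hence confines $U$ to a finite box.
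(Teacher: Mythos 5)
Your proof is correct, and it establishes the same characterization as the paper: a point fails to lie in $\G_{HF}(\L)$ exactly when it lies below a maximal lattice point or has a bad projection, and your ``if'' direction (via Lemmas \ref{shape 1} and \ref{shape 2}) is identical to the paper's. Where you genuinely differ is in how you produce a maximal lattice point above a bad point whose projections are all good. The paper runs a greedy staircase: it pushes in a direction $i$ with $H_{\L}(\bm{x}+\bm{e}_{i})\neq 0$ up to the last level $t_{i}$ where $H_{\L}$ is nonzero (such $t_{i}$ exists because the $i$-th projection is good), then repeats in other directions, asserting without further argument that ``the process stops after finite steps.'' You instead show the set $U$ of bad points above $\bm{s}$ is confined to a finite box --- uniform stabilization of the $H$-function plus upward-closedness of the sublink sets forces any point with one large coordinate back into $\G_{HF}(\L)$ --- and then take a $\preceq$-maximal element of $U$, which is a maximal lattice point by construction. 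This buys you exactly the termination justification the paper elides: the finiteness of your box is what makes the paper's iterative process halt, so your version is arguably more complete, while the paper's is more constructive (it exhibits the staircase explicitly). The one point to flag is your appeal to a stabilization threshold $b$ \emph{uniform} in the remaining coordinates: \cite[Proposition 3.12]{Boro} is stated for a fixed projection $\bm{s}\setminus s_{i}$ with $t$ sufficiently large, so you should remark that the threshold can be chosen to depend only on the link (for instance, it is bounded by the range of the Alexander filtration on a fixed Heegaard diagram, which is the same stabilization underlying the large surgery theorem); with that observation in place your argument is complete.
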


\begin{proof}
We claim that  $\bm{x}\notin \G_{HF}(L)$ if and only if either $\bm{x}\preceq \bm{z}$ for some maximal lattice point $\bm{z}\in \Z^{n}$ or $\bm{x}\setminus x_{i}\notin \G_{HF}(\L\setminus L_{i})$ for  some $i$ where $\bm{x}=(x_{1}, \cdots, x_{n})$. For the ``if" part, assume that $\bm{x}\in \G_{HF}(\L)$. Then $\bm{z}\in \G_{HF}(\L)$ if $\bm{z}\succeq \bm{x}$ and $\bm{x}\setminus x_{i}\in \G_{HF}(\L\setminus L_{i})$ for all $i$ by Lemma \ref{shape 1} and Lemma \ref{shape 2}, which contradicts to the assumption. 

For the ``only if" part, assume that $\bm{x}\notin \G_{HF}(\L)$ and $\bm{x}\setminus x_{i}\in \G_{HF}(\L\setminus L_{i})$ for all $i$. It suffices to find a maximal lattice point $\bm{z}$ such that $\bm{x}\preceq \bm{z}$. If $H_{\L}(\bm{x}+\bm{e}_{i})=0$ for all $i$, we let $\bm{z}=\bm{x}$. Otherwise, suppose $H_{\L}(\bm{x}+\bm{e}_{i})\neq 0$ for some $1\leq i \leq n$. There exists some constant $t_{i}$ such that $H_{\L}(\bm{x}+t_{i}\bm{e}_{i})\neq 0$, and $H_{\L}(\bm{x}+(t_{i}+1)\bm{e}_{i})=0$ since $\bm{x}\setminus x_{i}\in \G_{HF}(\L\setminus L_{i})$. If for all $j\neq i$, $H_{\L}(\bm{x}+t_{i}\bm{e}_{i}+\bm{e}_{j})=0$, we let $\bm{z}=\bm{x}+t_{i}\bm{e}_{i}$. Otherwise, we repeat this process. The process stops after finite steps. Thus, there exists a maximal lattice point $\bm{z}$ such that $\bm{x}\preceq \bm{z}$.

\end{proof}

\subsection{The $d$-invariant}
\label{d-invariant}
For a rational homology sphere $M$  with a Spin$^{c}$-structure $\mathfrak{s}$, the Heegaard Floer homology $HF^{\infty}(M, \mathfrak{s})\cong \F[U, U^{-1}]$ and  $HF^{+}(M, \mathfrak{s})$ is absolutely graded where  the free part is isomorphic to $\F[U^{-1}]$. Define the $d$-invariant of $(M, \mathfrak{s})$ to be the absolute grading of $1\in \mathbb{F}[U^{-1}]$ \cite{OSZ}. Equivalently,  the $d$-invariant of $(M, \mathfrak{s})$ is the maximum grading of $x\in HF^{-}(M, \mathfrak{s})$, which has a nontrivial image in $HF^{\infty}(M, \mathfrak{s})$. 

We define \emph{standard} 3-manifolds following \cite[Section 9]{OSZ}: 

\begin{definition}
A closed, oriented $3$-manifold $M$ is \emph{standard} if for each torsion Spin$^{c}$-structure $\mathfrak{s}$, 
$$HF^{\infty}(M, \mathfrak{s})\cong (\wedge^{b}H^{1}(M, \mathbb{F}))\otimes_{\mathbb{F}} \mathbb{F}[U, U^{-1}],$$
where $b=b_{1}(M)$. 

\end{definition}

\begin{remark}
If $M$ is standard, then  rk$HF^{\infty}(M, \mathfrak{s})=2^{b}$ as a $\F[U, U^{-1}]$-module.  
\end{remark}

Let $M_{1}$ and $M_{2}$ be a pair of oriented closed 3-manifolds equipped with Spin$^{c}$-structures $\mathfrak{s}_{1}$ and $\mathfrak{s}_{2}$ respectively. There is a connected sum formula for the Heegaard Floer homology \cite[Theorem 6.2]{SZ2}:
$$HF^{\infty}(M_{1}\# M_{2}, \mathfrak{s}_{1}\# \mathfrak{s}_{2})\cong H_{\ast}(CF^{\infty}(M_{1}, \mathfrak{s}_{1})\otimes_{\F[U, U^{-1}]} CF^{\infty}(M_{2}, \mathfrak{s}_{2})).$$
By the algebraic K\"{u}nneth theorem, if $M_{1}$ and $M_{2}$ are standard, then $M_{1}\# M_{2}$ is also standard.

If a 3-manifold  $M$ has a positive first Betti number (i.e $b_{1}(M)>0$), the exterior algebra $\Lambda^{\ast}(H_{1}(M; \F))$ acts on the homology groups $HF^{\infty}(M, \mathfrak{s}), HF^{+}(M, \mathfrak{s}), HF^{-}(M, \mathfrak{s})$ and $\widehat{HF}(M, \mathfrak{s})$, \cite[Section 4.2.5]{SZ2}. Define the subgroup $\mathfrak{A}_{\mathfrak{s}}\subset HF^{\infty}(M, \mathfrak{s})$ as follows:
$$\mathfrak{A}_{\mathfrak{s}}=\lbrace x\in HF^{\infty}(M, \mathfrak{s})\mid \gamma \cdot x=0 \quad \forall \gamma\in H_{1}(M, \F) \rbrace. $$
If $M$ is standard, $\mathfrak{A}_{\mathfrak{s}}\cong \mathbb{F}[U, U^{-1}]$, and its image under the map $\pi: HF^{\infty}(M, \mathfrak{s})\rightarrow HF^{+}(M, \mathfrak{s})$ is isomorphic to $\mathbb{F}[U^{-1}]$.

\begin{definition}
For a standard 3-manifold $M$ equipped with a torsion Spin$^{c}$-structure $\mathfrak{s}$, the $d$-invariant $d(M, \mathfrak{s})$ is defined as the absolute grading of $1\in \pi(\mathfrak{A}_{\mathfrak{s}})\cong \mathbb{F}[U^{-1}].$

\end{definition}

Given rational homology spheres $Y_{1}, Y_{2}$ and a negative definite Spin$^{c}$-cobordism $(W, \mathfrak{t})$ from $(Y_{1}, \mathfrak{t}_{1})$ to $(Y_{2}, \mathfrak{t}_{2})$, the $d$-invariant of $Y_{2}$ is no smaller than the $d$-invariant of  $Y_{1}$  up to the degree shift of the cobordism \cite{OSZ}. Rasmussen generalized this inequality for non-positive definite Spin$^{c}$-cobordisms from rational homology spheres to standard 3-manifolds \cite{Ras}. 

\begin{proposition}\cite{Ras}
\label{standard}
Suppose that  $W$ is a connected cobordism from closed, oriented and connected 3-manifolds $Y_{1}$ to $Y_{2}$ such that 
$b_{1}(Y_{1})=b_{1}(W)=b_{2}^{+}(W)=0,$
and $Y_{2}$ is  standard. 
Let $\mathfrak{s}$ be a Spin$^{c}$-structure on $W$ whose restriction $\mathfrak{s}_{i}$ to $Y_{i}$ is torsion. Then 
$$F^{\infty}_{W, \mathfrak{s}}: HF^{\infty}(Y_{1}, \mathfrak{s}_{1})\rightarrow HF^{\infty}(Y_{2}, \mathfrak{s}_{2})$$
maps $HF^{\infty}(Y_{1}, \mathfrak{s}_{1})$ isomorphically onto $\mathfrak{A}_{\mathfrak{s}_{2}}\subset HF^{\infty}(Y_{2}, \mathfrak{s}_{2})$.
Moreover, we have
$$d(Y_{1}, \mathfrak{s}_{1})+\deg F^{+}_{W, \mathfrak{s}}\leq d(Y_{2}, \mathfrak{s}_{2}) $$
where the degree  of $F^{+}_{W, \mathfrak{s}}: HF^{+}(Y_{1}, \mathfrak{s}_{1})\rightarrow HF^{+}(Y_{2}, \mathfrak{s}_{2})$ is 
$\dfrac{1}{4}(c_{1}^{2}(\mathfrak{s})-3\sigma(W)-2\chi (W)).$

\end{proposition}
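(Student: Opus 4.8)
The plan is to prove the two assertions of the proposition separately: first the statement that $F^{\infty}_{W,\mathfrak{s}}$ maps $HF^{\infty}(Y_{1},\mathfrak{s}_{1})$ isomorphically onto $\mathfrak{A}_{\mathfrak{s}_{2}}$, and then the $d$-invariant inequality, which will follow formally from the first. For the $HF^{\infty}$ statement I would begin by locating the image. The cobordism map is natural with respect to the $\Lambda^{\ast}H_{1}$-action \cite{SZ2}: if $\gamma\in H_{1}(Y_{2};\F)$ and $\gamma'\in H_{1}(Y_{1};\F)$ become homologous in $W$, then $\gamma\cdot F^{\infty}_{W,\mathfrak{s}}=F^{\infty}_{W,\mathfrak{s}}\circ(\gamma'\cdot{-})$. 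Since $b_{1}(W)=0$, every class $\gamma\in H_{1}(Y_{2};\F)$ is (rationally) null-homologous in $W$, hence homologous there to $0\in H_{1}(Y_{1};\F)$; naturality then gives $\gamma\cdot F^{\infty}_{W,\mathfrak{s}}=F^{\infty}_{W,\mathfrak{s}}\circ 0=0$, so $\operatorname{im}F^{\infty}_{W,\mathfrak{s}}\subseteq\mathfrak{A}_{\mathfrak{s}_{2}}$. As $Y_{1}$ is a rational homology sphere and $Y_{2}$ is standard, both $HF^{\infty}(Y_{1},\mathfrak{s}_{1})$ and $\mathfrak{A}_{\mathfrak{s}_{2}}$ are free $\F[U,U^{-1}]$-modules of rank one, and $F^{\infty}_{W,\mathfrak{s}}$ is $U$-equivariant and homogeneous; therefore it is an isomorphism onto $\mathfrak{A}_{\mathfrak{s}_{2}}$ as soon as it is shown to be nonzero.

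To prove the nonvanishing I would use a handle decomposition of $W$ relative to $Y_{1}$. Since $W$, $Y_{1}$, and $Y_{2}$ are connected, the $0$- and $4$-handles may be cancelled, so $W$ decomposes as a composition $W_{1}\cup W_{2}\cup W_{3}$ of a $1$-handle cobordism, a $2$-handle cobordism, and a $3$-handle cobordism, passing through intermediate manifolds $Y_{1}\to M_{1}\to M_{2}\to Y_{2}$. Each $1$-handle realizes a connected sum with $S^{1}\times S^{2}$, and by the Künneth formula its induced map on $HF^{\infty}$ is injective onto the summand annihilated by the new contraction; after all of $W_{1}$ the image is the rank-one summand $\mathfrak{A}(M_{1})$. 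Dually, the $3$-handles of $W_{3}$ induce the projection onto the bottom summand and restrict to an isomorphism $\mathfrak{A}(M_{2})\to\mathfrak{A}_{\mathfrak{s}_{2}}$. The $2$-handle cobordism $W_{2}$ is the essential case: here the hypothesis $b_{2}^{+}(W)=0$ forces the intersection form of the $2$-handles to be negative semi-definite, and I would feed this into the surgery exact triangle for $HF^{\infty}$ together with the intersection-form bookkeeping of Ozsv\'ath and Szab\'o \cite{OSZ} to conclude that the induced map restricts to an isomorphism between the rank-one summands $\mathfrak{A}(M_{1})$ and $\mathfrak{A}(M_{2})$. Composing the three pieces shows $F^{\infty}_{W,\mathfrak{s}}\neq 0$, completing the first assertion.

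For the $d$-invariant inequality I would exploit the commuting square of cobordism maps
$$\begin{CD} HF^{\infty}(Y_{1},\mathfrak{s}_{1}) @>{F^{\infty}_{W,\mathfrak{s}}}>> HF^{\infty}(Y_{2},\mathfrak{s}_{2})\\ @V{\pi_{1}}VV @VV{\pi_{2}}V\\ HF^{+}(Y_{1},\mathfrak{s}_{1}) @>{F^{+}_{W,\mathfrak{s}}}>> HF^{+}(Y_{2},\mathfrak{s}_{2}) \end{CD}$$
in which every map is homogeneous and the two horizontal maps have the same degree $\frac{1}{4}(c_{1}^{2}(\mathfrak{s})-3\sigma(W)-2\chi(W))$, the common degree shift for all flavors \cite{OSZ}. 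By definition $d(Y_{1},\mathfrak{s}_{1})=\min\{\,\textup{gr}(\xi):\xi\in HF^{\infty}(Y_{1},\mathfrak{s}_{1}),\ \pi_{1}(\xi)\neq 0\,\}$ and $d(Y_{2},\mathfrak{s}_{2})=\min\{\,\textup{gr}(y):y\in\mathfrak{A}_{\mathfrak{s}_{2}},\ \pi_{2}(y)\neq 0\,\}$. Given $y\in\mathfrak{A}_{\mathfrak{s}_{2}}$ with $\pi_{2}(y)\neq 0$, write $y=F^{\infty}_{W,\mathfrak{s}}(\xi)$ using the surjection just proved; commutativity gives $0\neq\pi_{2}(y)=F^{+}_{W,\mathfrak{s}}(\pi_{1}(\xi))$, so $\pi_{1}(\xi)\neq 0$ and hence $\textup{gr}(\xi)\geq d(Y_{1},\mathfrak{s}_{1})$. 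Since $F^{\infty}_{W,\mathfrak{s}}$ shifts grading by $\deg F^{+}_{W,\mathfrak{s}}$, we get $\textup{gr}(y)\geq d(Y_{1},\mathfrak{s}_{1})+\deg F^{+}_{W,\mathfrak{s}}$; minimizing over such $y$ yields $d(Y_{2},\mathfrak{s}_{2})\geq d(Y_{1},\mathfrak{s}_{1})+\deg F^{+}_{W,\mathfrak{s}}$, as claimed.

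The step I expect to be the main obstacle is the nonvanishing at the $2$-handle stage: verifying, via the surgery exact triangle and the negative semi-definiteness guaranteed by $b_{2}^{+}(W)=0$, that the induced map on $HF^{\infty}$ restricts to an \emph{isomorphism} (rather than merely an injection, or zero) between the rank-one summands $\mathfrak{A}(M_{1})$ and $\mathfrak{A}(M_{2})$. This is the technical heart of the argument, adapted from the rational-homology-sphere case treated by Ozsv\'ath and Szab\'o \cite{OSZ}.
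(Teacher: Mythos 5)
The paper itself gives no proof of Proposition \ref{standard}: it is quoted from Rasmussen \cite{Ras} (see also Levine--Ruberman \cite{Lev}), so your attempt can only be measured against the published arguments it is attributed to. Your outer skeleton matches those: the containment $\operatorname{im} F^{\infty}_{W,\mathfrak{s}}\subseteq \mathfrak{A}_{\mathfrak{s}_{2}}$ via naturality of the $H_{1}$-action under cobordism maps (fine, with the small caveat that the action factors through $H_{1}/\mathrm{tors}$, so vanishing of $b_{1}(W)$ really does suffice even with $\F$-coefficients), the reduction ``both sides are rank-one free $\F[U,U^{-1}]$-modules, so nonzero $U$-equivariant implies isomorphism,'' and the formal deduction of the $d$-inequality from the commuting square of flavors with the common degree shift are all correct and are exactly how the inequality is extracted once the $HF^{\infty}$ statement is known.

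The genuine gap is in the nonvanishing argument, and it is concrete: your bookkeeping at the $1$-handle stage is backwards, which mis-specifies the statement you would need at the $2$-handle stage. The $1$-handle map is $x\mapsto x\otimes\theta^{+}$, tensoring with the \emph{top}-degree generator of $HF^{\infty}(S^{1}\times S^{2})$; this is forced by the grading-shift formula (a $1$-handle cobordism has $\chi=-1$, $\sigma=0$, $c_{1}^{2}=0$, hence shift $+1/2$, and the two generators sit in degrees $\pm 1/2$), and $\theta^{+}$ is \emph{not} annihilated by the contraction, since $\gamma\cdot\theta^{+}=\theta^{-}$. So after $W_{1}$ the image is the $\theta^{+}$-tower, not $\mathfrak{A}(M_{1})$. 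As a sanity check, if $1$-handles carried $\mathfrak{A}$ to $\mathfrak{A}$ with shift $+1/2$, one would conclude $d(Y\# S^{1}\times S^{2})\geq d(Y)+1/2$, contradicting the additivity $d(Y\# S^{1}\times S^{2})=d(Y)-1/2$ from \cite{Lev}. Consequently your proposed $2$-handle lemma (isomorphism $\mathfrak{A}(M_{1})\to\mathfrak{A}(M_{2})$) would not compose with the actual image even if it were true: since the $3$-handle maps kill $\theta^{+}$ and send $\theta^{-}$ to $1$, what must be proved is that the $2$-handle cobordism map on $HF^{\infty}$ carries the top tower $\theta^{+\otimes k}$ to an element with nonzero component in the bottom tower $\theta^{-\otimes l}$, i.e.\ that it genuinely mixes top into bottom. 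That mixing is the technical heart of the negative-(semi)definite nonvanishing theorem of Ozsv\'ath--Szab\'o and of Rasmussen's generalization to standard targets, and its proof uses twisted coefficients and a summation over Spin$^{c}$-structures on the $2$-handle cobordism; the untwisted surgery exact triangle you invoke is not sufficient. So the crucial step is both left unproven (as you acknowledge) and set up as the wrong statement.
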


The $d$-invariants of large surgeries on a link $\L=L_{1}\cup \cdots \cup L_{n}\subset S^{3}$ can be computed in terms of  the $H$-function of the link by the large surgery theorem \cite{Cip}. Choose a framing vector $\bm{q}=(q_{1}, \cdots, q_{n})$ where $q_{1}, \cdots, q_{n}$ are sufficiently large. Let $\Lambda$ denote the linking matrix where $\Lambda_{ij}$ is the linking number of $L_{i}$ and $L_{j}$ when $i\neq j$, and $\Lambda_{ii}=q_{i}$. 

Attach $n$ 2-handles to the $4$-ball $B^{4}$ along $L_{1}, L_{2}\cdots, L_{n}$ with framings $q_{1}, \cdots, q_{n}$. We obtain a 2-handlebody $W$ with boundary $\partial W=S^{3}_{\bm{q}}(\L)$ which is the $3$-manifold obtained by doing surgery along $L_{1}, L_{2}, \cdots, L_{n}$ with surgery coefficients $q_{1}, \cdots, q_{n}$ respectively. Assume  that $\det (\Lambda)\neq 0$, then $S^{3}_{\bm{q}}(\L)$ is a rational homology sphere with $|H_{1}(S^{3}_{\bm{q}}(\L))| =| \textup{det}(\Lambda)|$. Note that if $\L$ has vanishing pairwise linking numbers, then $\Lambda$ is a diagonal matrix with $\Lambda_{ii}=q_{i}$, and $\det(\Lambda)=q_{1}\cdots q_{n}\neq 0$.  The Spin$^{c}$-structures on $S^{3}_{\bm{q}}(\L)$ are enumerated  as follows:

\begin{lemma}\cite[Section 9.3]{Cip}
There are natural identifications: 
$$H^{2}(S_{\bm{q}}^{3}(\L))\cong H_{1}(S_{\bm{q}}^{3}(\L))\cong \mathbb{Z}^{n} / \mathbb{Z}^{n}\Lambda $$
such that $c_{1}(\mathfrak{s})=[2\mathfrak{s}]$ for any $\mathfrak{s}\in \textup{Spin}^{c}(S_{\bm{q}}^{3}(\L))\cong \mathbb{H}(\L)/ \mathbb{Z}^{n}\Lambda $. 

\end{lemma}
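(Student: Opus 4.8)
The plan is to read off everything directly from the $2$-handlebody $W$ with $\partial W=S^{3}_{\bm{q}}(\L)$ constructed above. Write $Y=S^{3}_{\bm{q}}(\L)$. Since $W$ is obtained from $B^{4}$ by attaching $n$ two-handles, it is homotopy equivalent to a wedge of $n$ two-spheres, so $H_{1}(W)=0$ and $H_{2}(W)\cong\Z^{n}$ with a basis $[F_{1}],\dots,[F_{n}]$ given by capping the core of the $i$-th handle with a Seifert surface for $L_{i}$. The intersection form of $W$ in this basis is exactly $\Lambda$, because $[F_{i}]\cdot[F_{i}]=q_{i}$ and $[F_{i}]\cdot[F_{j}]=lk(L_{i},L_{j})=\Lambda_{ij}$ for $i\neq j$; recall $\Lambda$ is symmetric.

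First I would compute the homology. Feeding $H_{1}(W)=0$ into the long exact sequence of the pair $(W,Y)$ and identifying $H_{2}(W,Y)\cong H^{2}(W)\cong\mathrm{Hom}(H_{2}(W),\Z)\cong\Z^{n}$ by Poincar\'e--Lefschetz duality and universal coefficients, the map $H_{2}(W)\to H_{2}(W,Y)$ becomes the intersection form $\Lambda$. Exactness then gives $H_{1}(Y)\cong\mathrm{coker}(\Lambda\colon\Z^{n}\to\Z^{n})=\Z^{n}/\Z^{n}\Lambda$, and Poincar\'e duality for the closed oriented $3$-manifold $Y$ yields $H^{2}(Y)\cong H_{1}(Y)$. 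This establishes the first identification.

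Next I would set up the labelling of $\mathrm{Spin}^{c}$-structures. Since $H^{3}(W,Y)\cong H_{1}(W)=0$, every $\mathrm{Spin}^{c}$-structure on $Y$ extends over $W$, so restriction is a surjection $\mathrm{Spin}^{c}(W)\to\mathrm{Spin}^{c}(Y)$; and $\mathrm{Spin}^{c}(W)$ is a torsor over $H^{2}(W)\cong\Z^{n}$ recorded by the characteristic vectors $c_{1}(\mathfrak t)$, which satisfy $\langle c_{1}(\mathfrak t),[F_{i}]\rangle\equiv[F_{i}]\cdot[F_{i}]=q_{i}\pmod 2$. The key step is to label $\mathfrak t$ by $\bm s=(s_{1},\dots,s_{n})$ via $2s_{i}=\langle c_{1}(\mathfrak t),[F_{i}]\rangle+\sum_{j}\Lambda_{ij}$; the characteristic condition forces $s_{i}\in\Z+\tfrac12 lk(L_{i},\L\setminus L_{i})=\H_{i}(\L)$, so $\bm s\in\H(\L)$. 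Two structures on $W$ restrict to the same one on $Y$ precisely when their $c_{1}$'s differ by twice an element of $\mathrm{im}(H^{2}(W,Y)\to H^{2}(W))$, which under Lefschetz duality is $\Lambda\Z^{n}$; translating through the labelling, this is exactly $\bm s'-\bm s\in\Lambda\Z^{n}=\Z^{n}\Lambda$. Hence restriction induces the bijection $\mathrm{Spin}^{c}(Y)\cong\H(\L)/\Z^{n}\Lambda$.

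Finally, the Chern class formula drops out of the same bookkeeping. The restriction $H^{2}(W)\to H^{2}(Y)$ is the quotient $\Z^{n}\to\Z^{n}/\Z^{n}\Lambda$, so $c_{1}(\mathfrak s)=[(\langle c_{1}(\mathfrak t),[F_{i}]\rangle)_{i}]=[2\bm s-\Lambda\bm 1]$, where $\bm 1=(1,\dots,1)$; since $\Lambda\bm 1\in\Z^{n}\Lambda$ this equals $[2\bm s]$, which gives $c_{1}(\mathfrak s)=[2\mathfrak s]$ under the identification above. I expect the only delicate point to be pinning down the half-integer shift by $\tfrac12 lk(L_{i},\L\setminus L_{i})$, so that the labels genuinely lie in the affine lattice $\H(\L)$, together with keeping the Poincar\'e--Lefschetz identifications consistent so that the correction term $\Lambda\bm 1$ vanishes modulo $\Z^{n}\Lambda$.
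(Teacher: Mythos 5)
Your proof is correct, and it is essentially the argument behind the cited result: the paper itself gives no proof of this lemma, quoting it from \cite[Section 9.3]{Cip}, where the identification comes from exactly this computation on the $2$-handlebody $W$ (intersection form $\Lambda$, the long exact sequence of $(W,\partial W)$ with Poincar\'e--Lefschetz duality, and labelling of Spin$^{c}$-structures by characteristic vectors via $c_{1}(\mathfrak{t}_{\bm v})=2\bm{v}-\Lambda\bm{1}$, matching Proposition \ref{spin extension}). Your bookkeeping of the half-integer shift placing $\bm{s}$ in $\H(\L)$ and the cancellation of $\Lambda\bm{1}$ modulo $\Z^{n}\Lambda$ are both handled correctly, so nothing is missing.
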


Fix $\zeta =(\zeta_{1}, \cdots, \zeta_{n})$, a small real vector whose entries are linearly independent over $\mathbb{Q}$. Let $P(\Lambda)$ be the hyper-parallelepiped with vertices
$$\zeta+\dfrac{1}{2}(\pm \Lambda_{1}, \pm \Lambda_{2}, \cdots, \pm \Lambda_{n}), $$
where all combinations of the signs are used and $\Lambda_{1}, \cdots, \Lambda_{n}$ are column vectors of the matrix $\Lambda$. Denote 
$$P_{\mathbb{H}}(\Lambda)=P(\Lambda)\cap \mathbb{H}(\L),$$
where $\mathbb{H}(\L)$ is the lattice for $\L$. 

\begin{proposition}\cite[Section 10.1]{Cip}
\label{spin extension}
For any $\bm{v}\in P_{\mathbb{H}}(\Lambda)$ there exists a unique Spin$^{c}$-structure $\mathfrak{s}_{\bm{v}}$ on $S_{\bm{q}}(\L)$ which extends to a Spin$^{c}$-structure $\mathfrak{t}_{\bm{v}}$ on $W$  with $c_{1}(\mathfrak{t}_{\bm{v}})=2\bm{v}-(\Lambda_{1}+\cdots+\Lambda_{n})$.

\end{proposition}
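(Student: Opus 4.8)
The plan is to exploit the simple connectivity of $W$ to reduce the existence and uniqueness assertions to a computation with characteristic covectors, and to use Poincar\'e--Lefschetz duality to identify the restriction map $\mathrm{Spin}^{c}(W)\to \mathrm{Spin}^{c}(S_{\bm q}(\L))$ with reduction modulo the sublattice $\Z^{n}\Lambda$. First I would record the topology of $W$: since $W$ is built from $B^{4}$ by attaching $n$ two-handles it is simply connected, $H_{2}(W;\Z)\cong\Z^{n}$ is freely generated by the classes $[\widehat{F}_{i}]$ obtained by capping Seifert surfaces for $L_{i}$ with the handle cores, and the intersection form in this basis is exactly $\Lambda$. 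Because $W$ is simply connected, $H^{2}(W;\Z)$ is free and $c_{1}\colon \mathrm{Spin}^{c}(W)\to H^{2}(W;\Z)\cong\Z^{n}$ (identifying a class with its vector of evaluations $\langle\,\cdot\,,[\widehat{F}_{i}]\rangle$) is injective, with image the set of \emph{characteristic} covectors, i.e.\ those $\bm c$ with $c_{i}\equiv\Lambda_{ii}=q_{i}\pmod 2$. This already yields uniqueness: a $\mathrm{Spin}^{c}$-structure on $W$ with prescribed $c_{1}$ is unique, so if $\mathfrak t_{\bm v}$ exists its restriction $\mathfrak s_{\bm v}=\mathfrak t_{\bm v}|_{S_{\bm q}(\L)}$ is well defined, and any boundary structure extending with the given $c_{1}$ must restrict this same $\mathfrak t_{\bm v}$.

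For existence I would verify that the prescribed vector $\bm c=2\bm v-(\Lambda_{1}+\cdots+\Lambda_{n})$ is characteristic, which is precisely where the definition of $\H(\L)$ enters. Writing $\bm v\in\H(\L)$ as $v_{i}=n_{i}+\tfrac12\sum_{j\ne i}\Lambda_{ij}$ with $n_{i}\in\Z$, the $i$-th entry of $\bm c$ is $2v_{i}-\sum_{j}\Lambda_{ij}=2n_{i}-\Lambda_{ii}\equiv q_{i}\pmod 2$. Hence $\bm c$ is characteristic and determines a unique $\mathfrak t_{\bm v}\in\mathrm{Spin}^{c}(W)$, whose restriction is the desired $\mathfrak s_{\bm v}$. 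Combined with the previous paragraph this settles the literal statement for each fixed $\bm v$.

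To explain the role of $P_{\H}(\Lambda)$ — namely that the $\mathfrak s_{\bm v}$ are distinct and enumerate all boundary structures — I would analyze the restriction map. From the exact sequence of $(W,\partial W)$ together with $H^{2}(W,\partial W)\cong H_{2}(W)$, the map $H^{2}(W,\partial W)\to H^{2}(W)$ is the one induced by the intersection form, so $\ker\bigl(r\colon H^{2}(W)\to H^{2}(S_{\bm q}(\L))\bigr)=\mathrm{im}(\Lambda)=\Z^{n}\Lambda$; moreover $r$ is onto since $H^{3}(W,\partial W)\cong H_{1}(W)=0$, recovering $H^{2}(S_{\bm q}(\L))\cong\Z^{n}/\Z^{n}\Lambda$. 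Because the $\mathfrak t_{\bm v}$ differ by $\bm v-\bm v'\in H^{2}(W;\Z)$ (the factor of two being harmless as $H^{2}(W)$ is torsion-free), one gets $\mathfrak s_{\bm v}=\mathfrak s_{\bm v'}$ iff $\bm v-\bm v'\in\Z^{n}\Lambda$, matching $\mathrm{Spin}^{c}(S_{\bm q}(\L))\cong\H(\L)/\Z^{n}\Lambda$. Since $P(\Lambda)=\zeta+\Lambda\cdot[-\tfrac12,\tfrac12]^{n}$ has volume $|\det\Lambda|$, it is a fundamental domain for translation by $\Z^{n}\Lambda$, so $P_{\H}(\Lambda)$ contains exactly one representative of each coset; the genericity of $\zeta$ (entries linearly independent over $\mathbb{Q}$) is exactly what keeps all points of $\H(\L)$ off $\partial P(\Lambda)$, preventing over- or under-counting on the faces.

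I expect the main obstacle to be the bookkeeping in this last step: correctly identifying $\ker r$ with $\Z^{n}\Lambda$ through duality (using that $\Lambda$ is symmetric, so its row and column lattices coincide), and checking rigorously that the generically translated parallelepiped $P(\Lambda)$ meets each coset of $\H(\L)/\Z^{n}\Lambda$ exactly once. The first two steps are essentially formal once one invokes the standard correspondence between $\mathrm{Spin}^{c}$-structures and characteristic covectors on a simply connected $4$-manifold together with the half-integer shift built into $\H(\L)$.
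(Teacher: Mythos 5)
Your proof is correct and follows essentially the same route as the cited source: the paper itself offers no proof of Proposition \ref{spin extension} (it is quoted from \cite{Cip}), and the argument there is exactly this identification of Spin$^{c}$-structures on the simply connected handlebody $W$ with characteristic covectors of the linking matrix, where the half-integer shift in $\H(\L)$ is what makes $2v_{i}-\sum_{j}\Lambda_{ij}\equiv q_{i}\pmod 2$, and Poincar\'e--Lefschetz duality identifies the restriction $\mathrm{Spin}^{c}(W)\to \mathrm{Spin}^{c}(S^{3}_{\bm{q}}(\L))$ with reduction modulo $\Z^{n}\Lambda$. Your closing observations (uniqueness from torsion-freeness of $H^{2}(W)$, and the generically translated parallelepiped $P(\Lambda)$ meeting each coset of $\H(\L)/\Z^{n}\Lambda$ exactly once) likewise match the standard treatment, so there is no gap to report.
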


Remove a ball $B^{4}$ from the 2-handlebody $W$. We obtain a Spin$^{c}$-cobordism $\mathcal{U}$ from $(S^{3}, \mathfrak{s}_{0})$ to $(S^{3}_{\bm{q}}(\L), \mathfrak{s}_{\bm{v}})$. By reversing the orientation of $\mathcal{U}$, we obtain a Spin$^{c}$-cobordism $\mathcal{U}'$ equipped with the Spin$^{c}$-structure $\mathfrak{t}_{\bm{v}}$ from $(S^{3}_{\bm{q}}(\L), \mathfrak{s}_{\bm{v}})$ to $(S^{3}, \mathfrak{s}_{0})$.

\begin{theorem}\cite{Boro, Cip}
\label{d-invariant formula}
For $\bm{v}\in P_{\H}(\Lambda)$, the $d$-invariant of a large surgery with surgery coefficients $\bm{q}$ on $\L$ is given by 
$$d(S^{3}_{\bm{q}}(\L), \mathfrak{s}_{\bm{v}})=-\textup{deg} F_{(\mathcal{U}', \mathfrak{t}_{\bm{v}})}-2H(\bm{v}),$$
where  $\deg F_{\mathcal{U}', \mathfrak{t}_{\bm{v}}}$ is the grading shift of the cobordism $\mathcal{U}'$ with Spin$^{c}$-structure $\mathfrak{t}_{\bm{v}}$. It does not depend on the link, but depends on the linking matrix $\Lambda$. 
\end{theorem}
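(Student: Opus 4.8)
The plan is to read the $d$-invariant off the top of the $U$-tower of the surgered manifold, transport that tower to the subcomplex model $A^{-}(\L,\bm{v})$ via the large surgery theorem, and then account for the discrepancy between the two absolute gradings by the grading shift of the associated cobordism. Concretely, by the large surgery theorem of \cite{Cip}, for $\bm{q}$ sufficiently large there is an isomorphism of $\F[U]$-modules $HF^{-}(S^{3}_{\bm{q}}(\L),\mathfrak{s}_{\bm{v}})\cong H_{\ast}(A^{-}(\L,\bm{v}))$, and the absolute gradings on the two sides are intertwined, up to a single overall shift, through the Spin$^{c}$-cobordism $(\mathcal{U}',\mathfrak{t}_{\bm{v}})$ from $(S^{3}_{\bm{q}}(\L),\mathfrak{s}_{\bm{v}})$ to $(S^{3},\mathfrak{s}_{0})$. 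Both sides decompose as a free part isomorphic to $\F[U]$ together with $U$-torsion, so the computation reduces to comparing the top gradings of the two free parts.

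By definition, $d(S^{3}_{\bm{q}}(\L),\mathfrak{s}_{\bm{v}})$ is the maximal grading of a nontorsion element of $HF^{-}(S^{3}_{\bm{q}}(\L),\mathfrak{s}_{\bm{v}})$, measured in the intrinsic absolute $\mathbb{Q}$-grading of the surgered manifold. On the other hand, $A^{-}(\L,\bm{v})$ is a subcomplex of $CF^{-}(S^{3})$ and therefore inherits the Maslov grading from $S^{3}$; with respect to that grading the top of the free part of $H_{\ast}(A^{-}(\L,\bm{v}))$ lies in degree $-2H(\bm{v})$, which is precisely the defining property of the $H$-function. Thus the only thing separating $d(S^{3}_{\bm{q}}(\L),\mathfrak{s}_{\bm{v}})$ from $-2H(\bm{v})$ is the difference between these two gradings, i.e.\ the grading shift of the isomorphism above.

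The key step is therefore to identify that grading difference with $\deg F_{(\mathcal{U}',\mathfrak{t}_{\bm{v}})}$. The large surgery identification is realized geometrically by $(\mathcal{U}',\mathfrak{t}_{\bm{v}})$, whose induced map is homogeneous of degree $\deg F_{(\mathcal{U}',\mathfrak{t}_{\bm{v}})}=\tfrac{1}{4}(c_{1}^{2}(\mathfrak{t}_{\bm{v}})-3\sigma(\mathcal{U}')-2\chi(\mathcal{U}'))$; consequently the intrinsic grading on $HF^{-}(S^{3}_{\bm{q}}(\L),\mathfrak{s}_{\bm{v}})$ and the $S^{3}$-inherited grading on $A^{-}(\L,\bm{v})$ differ by exactly this amount. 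Evaluating on the top of the $U$-tower, whose intrinsic grading is $d(S^{3}_{\bm{q}}(\L),\mathfrak{s}_{\bm{v}})$ and whose $A^{-}$-grading is $-2H(\bm{v})$, yields $d(S^{3}_{\bm{q}}(\L),\mathfrak{s}_{\bm{v}})+\deg F_{(\mathcal{U}',\mathfrak{t}_{\bm{v}})}=-2H(\bm{v})$, which rearranges to the claimed identity. The final assertion, that $\deg F_{(\mathcal{U}',\mathfrak{t}_{\bm{v}})}$ is independent of $\L$ and depends only on $\Lambda$, is then immediate from the degree formula: $\chi(\mathcal{U}')$ is fixed by the handle decomposition (one $0$-handle and $n$ $2$-handles), $\sigma(\mathcal{U}')$ is the signature of $\Lambda$, and $c_{1}^{2}(\mathfrak{t}_{\bm{v}})$ is computed from $c_{1}(\mathfrak{t}_{\bm{v}})=2\bm{v}-(\Lambda_{1}+\cdots+\Lambda_{n})$ together with the intersection form $\Lambda$, so all three inputs are functions of $\Lambda$ and $\bm{v}$ alone.

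The hard part will be justifying that the large surgery isomorphism intertwines the two absolute gradings with exactly the shift $\deg F_{(\mathcal{U}',\mathfrak{t}_{\bm{v}})}$, including the correct sign: this requires carefully matching the intrinsic $\mathbb{Q}$-grading on $HF^{-}(S^{3}_{\bm{q}}(\L),\mathfrak{s}_{\bm{v}})$ against the grading that $A^{-}(\L,\bm{v})$ inherits from $CF^{-}(S^{3})$, which is precisely the absolute-grading bookkeeping carried out in \cite{Cip} and \cite{Boro}. Once the isomorphism is known to respect the $U$-tower up to this stated degree, the equality follows formally by comparing the two notions of ``top of the tower'' recorded above.
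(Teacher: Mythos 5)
The paper does not actually prove this theorem---it imports it from \cite{Boro, Cip}---and your proposal correctly reconstructs the argument given in those references: the large surgery theorem identifies $HF^{-}(S^{3}_{\bm{q}}(\L), \mathfrak{s}_{\bm{v}})$ with $H_{\ast}(A^{-}(\L, \bm{v}))$, whose free part tops out in degree $-2H(\bm{v})$ with respect to the grading inherited from $CF^{-}(S^{3})$, and the two absolute gradings differ by exactly $\deg F_{(\mathcal{U}', \mathfrak{t}_{\bm{v}})}$ because $F_{\mathcal{U}', \mathfrak{t}_{\bm{v}}}$ factors as the large-surgery identification followed by the grading-preserving inclusion $A^{-}(\L, \bm{v})\hookrightarrow CF^{-}(S^{3})$, which is precisely the bookkeeping carried out in the cited sources. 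One cosmetic slip: since $\mathcal{U}'$ is $W$ with a ball removed, as a cobordism it consists of $n$ (upside-down) $2$-handles and no $0$-handle, so $\chi(\mathcal{U}')=n$ rather than $n+1$; this does not affect your conclusion that the degree shift depends only on $\Lambda$ and $\bm{v}$.
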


\subsection{The $h$-function of $L$-space links}
\label{h-function}

In \cite{SZ4}, Ozsv\'ath and Szab\'o introduced the concept of $L$-spaces. 

\begin{definition}
A 3-manifold $M$ is an $L$-space if it is a rational homology sphere and its Heegaard Floer homology has minimal possible rank: for any Spin$^{c}$-structure $\mathfrak{s}$, $\widehat{HF}(M, \mathfrak{s})=\F$, and $HF^{-}(Y, \mathfrak{s})$ is a free $\F[U]$-module of rank 1. 
\end{definition}

In terms of the large surgery, Gorsky and N\'emethi defined $L$-space links in \cite{Gor}.

\begin{definition}
\label{definition of L-space link}
An oriented $n$-component link $\L\subset S^{3}$ is an $L$-space link if there exists  $\bm{0}\prec \bm{p}\in \mathbb{Z}^{n}$ such that the surgery manifold $S^{3}_{\bm{q}}$ is an $L$-space for any $\bm{q}\succeq \bm{p}$. 

\end{definition}

For $L$-space links $\L$, $H_{\ast}(A^{-}(\L, \bm{s}))=\F[[U]]$ \cite{Liu}. By Equation \eqref{link Floer} and the inclusion-exclusion formula, one can write \cite{Boro}:
\begin{equation}
\label{computation of h-function 1}
\chi(HFL^{-}(\L, \bm{s}))=\sum_{B\subset \lbrace 1, \cdots, n \rbrace}(-1)^{|B|-1}H_{\L}(\bm{s}-\bm{e}_{B}). 
\end{equation}

The Euler characteristic $\chi(HFL^{-}(\L, \bm{s}))$ was computed in \cite{SZ3}:
\begin{equation}
\label{computation 3}
\tilde{\Delta}(t_{1}, \cdots, t_{n}):=\sum_{\bm{s}\in \H(\L)}\chi(HFL^{-}(\L, \bm{s}))t_{1}^{s_{1}}\cdots t_{n}^{s_{n}}
\end{equation}
where $\bm{s}=(s_{1}, \cdots, s_{n})$, and
$$
\tilde{\Delta}_{\L}(t_{1}, \cdots, t_{n}): = \left\{
        \begin{array}{ll}
           (t_{1}\cdots t_{n})^{1/2} \Delta_{\L}(t_{1}, \cdots, t_{n}) & \quad \textup{if } n >1, \\
            \Delta_{\L}(t)/(1-t^{-1}) & \quad  \textup{if } n=1. 
        \end{array}
    \right. 
$$

\begin{theorem}\cite{Gor}
\label{computation 4}
The $H$-function of an $L$-space link is determined by the Alexander polynomials of its sublinks as follows:
\begin{equation}
\label{converse computation of h-function}
H_{\L}(\bm{s})=\sum_{\L'\subset \L}(-1)^{\#\L'-1}\sum_{\bm{u'}\succeq \pi_{\L'}(\bm{s}+\bm{1})}\chi(HFL^{-}(\L', \bm{u'})),
\end{equation}
where $\bm{1}=(1, \cdots, 1)$. 
\end{theorem}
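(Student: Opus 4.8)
The plan is to read the desired formula as the M\"obius-type inversion of the inclusion--exclusion relation \eqref{computation of h-function 1}, which expresses $\chi(HFL^{-}(\L,\bm{s}))$ in terms of the $H$-function. The three ingredients I would use are: \eqref{computation of h-function 1} itself; the stabilization property $H_{\L}(\bm{s}+t\bm{e}_{i})=H_{\L\setminus L_{i}}(\bm{s}\setminus s_{i})$ for large $t$ (see \cite[Proposition 3.12]{Boro}), which I will iterate; and the observation that $\chi(HFL^{-}(\L,\bm{u}))$ vanishes whenever some coordinate $u_{j}$ is large, since by stabilization the finite-difference factor in the $j$-th direction then dies. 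This last point guarantees that every ``upward tail sum'' $\sum_{\bm{u}\succeq\bm{w}}$ below is a finite sum. I also note that every sublink of an $L$-space link is again an $L$-space link, so \eqref{computation of h-function 1} and the stabilization hold verbatim for each sublink $\L'$.

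The first and main step is a multivariable telescoping identity, which I would prove for $\L$ and, by the previous remark, for every sublink simultaneously. Writing $(1-T_{i})f(\bm{s})=f(\bm{s})-f(\bm{s}-\bm{e}_{i})$, relation \eqref{computation of h-function 1} reads $\chi(HFL^{-}(\L,\bm{s}))=-\prod_{i=1}^{n}(1-T_{i})H_{\L}(\bm{s})$, since expanding the product reproduces the alternating sum over $\bm{e}_{B}$ (the $B=\emptyset$ term included). Summing over the quadrant $\bm{u}\succeq\bm{s}+\bm{1}$ and telescoping one coordinate at a time, the one-variable identity $\sum_{u_{i}\ge s_{i}+1}(1-T_{i})g = g|_{u_{i}=\infty}-g|_{u_{i}=s_{i}}$ turns the $i$-th factor into the difference of the evaluation operators $E_{i}^{\infty}$ (take the $i$-th coordinate to $+\infty$, equivalently delete $L_{i}$) and $E_{i}^{s_{i}}$ (set the $i$-th coordinate to $s_{i}$). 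Since these operators act on distinct coordinates and the sum is finite, I obtain
$$\sum_{\bm{u}\succeq\bm{s}+\bm{1}}\chi(HFL^{-}(\L,\bm{u}))=-\prod_{i=1}^{n}\bigl(E_{i}^{\infty}-E_{i}^{s_{i}}\bigr)H_{\L}(\bm{s}).$$
Driving the coordinates in a subset to $+\infty$ collapses $H_{\L}$ to the $H$-function of the complementary sublink by iterated stabilization. Expanding the product and reindexing by the surviving sublink $L_{B}$ (those coordinates not sent to infinity) then yields the clean identity
$$\sum_{\bm{u}\succeq\bm{s}+\bm{1}}\chi(HFL^{-}(\L,\bm{u}))=\sum_{\emptyset\neq B\subseteq\{1,\dots,n\}}(-1)^{|B|-1}H_{L_{B}}(\pi_{L_{B}}(\bm{s})),$$
together with its evident analogue with $\{1,\dots,n\}$ replaced by any index set $D$ for the sublink $L_{D}$.

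The second step is a combinatorial collapse. Substituting the identity above, for each sublink $\L'=L_{D}$, into the right-hand side of the asserted formula and interchanging the order of summation, I would get $\sum_{\emptyset\neq B}(-1)^{|B|-1}H_{L_{B}}(\pi_{L_{B}}(\bm{s}))\sum_{B\subseteq D\subseteq\{1,\dots,n\}}(-1)^{|D|-1}$. The inner sum equals $(-1)^{|B|-1}\sum_{C\subseteq B^{c}}(-1)^{|C|}$, which vanishes unless $B=\{1,\dots,n\}$; hence only the top term survives and the whole expression reduces to $(-1)^{n-1}(-1)^{n-1}H_{\L}(\bm{s})=H_{\L}(\bm{s})$, as desired. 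Note that this argument needs no separate induction: the telescoping identity holds for all sublinks at once, and the cancellation finishes the job.

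The step I expect to be the main obstacle is the bookkeeping in the telescoping: one must justify interchanging the multivariable tail sum with the iterated one-variable telescoping and, crucially, verify that driving several coordinates to $+\infty$ simultaneously produces precisely the $H$-function of the sublink obtained by deleting those components. This follows by iterating the one-coordinate stabilization, but some care is needed because the stabilization threshold in one coordinate may a priori depend on the others; the fact that $\chi(HFL^{-})$ vanishes once any coordinate is large (equivalently, that $H_{\L}$ is eventually constant in each direction) is what makes the iterated limits and the interchange of summation legitimate. The remaining combinatorics is the standard identity $\sum_{C\subseteq S}(-1)^{|C|}=0$ for $S\neq\emptyset$, and is routine.
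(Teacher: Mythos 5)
Your argument is correct: inverting \eqref{computation of h-function 1} by writing $\chi(HFL^{-}(\L,\bm{s}))=-\prod_{i=1}^{n}(1-T_{i})H_{\L}(\bm{s})$, telescoping over the quadrant using the stabilization $H_{\L}(\bm{s}+t\bm{e}_{i})=H_{\L\setminus L_{i}}(\bm{s}\setminus s_{i})$ together with the vanishing of $H_{\L}$ for $\bm{s}$ large (and the fact that sublinks of $L$-space links are $L$-space links, so the same relation applies to every $\L'$), and then collapsing the double sum over sublinks via $\sum_{C\subseteq S}(-1)^{|C|}=0$ for $S\neq\emptyset$, is a valid proof, and your flagged bookkeeping points (finiteness of the tail sums and iterated limits) are handled correctly. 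The paper itself gives no proof of Theorem \ref{computation 4} --- it is quoted from Gorsky--N\'emethi \cite{Gor} --- and your M\"obius-inversion/telescoping argument is essentially the one in that source, so there is nothing within the paper to contrast it with.
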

\begin{remark}
For $L$-space links with two components, the explicit formula for the $H$-function can also be found in \cite{Liu}. 
\end{remark}

Consider  $L$-space links  $\L$ with vanishing pairwise linking numbers. The set $\G_{HF}(\L)$ can also be described in terms of Alexander polynomials of the link and sublinks. 

\begin{lemma}
\label{determine Alexander polynomial}
For an  $n$-component $L$-space link $\L\subseteq S^{3}$ with vanishing pairwise linking numbers, $\bm{s}\in \G_{HF}(\L)$ if and only if for all $\bm{y}=(y_{1}, \cdots, y_{n})\succ \bm{s}$, the coefficients of $t_{1}^{y_{1}}\cdots t_{n}^{y_{n}}$ in $\tilde{\Delta}_{\L}(t_{1}, \cdots, t_{n})$ are 0, and the coefficients  corresponding to $\bm{y}\setminus y_{B}$ in $\tilde{\Delta}_{\L\setminus L_{B}}(t_{i_{1}}, \cdots, t_{i_{k}})$ are also 0 for all $B\subset\lbrace 1, \cdots, n\rbrace$. 
\end{lemma}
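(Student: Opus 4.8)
The plan is to translate the vanishing of the $h$-function into conditions on the Euler characteristics $\chi(HFL^{-})$ via the inclusion-exclusion formula, and then feed those into the generating-function identity \eqref{computation 3} that identifies these Euler characteristics with the coefficients of $\tilde{\Delta}_{\L}$. First I would recall from Section~\ref{h-function general} that for a link with vanishing pairwise linking numbers we have $h_{\L}(\bm{s})=H_{\L}(\bm{s})$ whenever $\bm{s}\succeq\bm{0}$, so that $\bm{s}\in\G_{HF}(\L)$ means precisely $H_{\L}(\bm{s})=0$. By Lemma~\ref{h-function increase} the $H$-function is nonnegative and nonincreasing under adding $\bm{e}_i$, so $H_{\L}(\bm{s})=0$ is equivalent to $H_{\L}(\bm{y})=0$ for \emph{all} $\bm{y}\succeq\bm{s}$. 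This is the crucial reformulation: membership in $\G_{HF}(\L)$ is governed by an entire ``upper cone'' of lattice points rather than a single one.

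The heart of the argument is the inclusion-exclusion identity \eqref{computation of h-function 1}, which expresses $\chi(HFL^{-}(\L,\bm{y}))$ as an alternating sum of $H$-values $H_{\L}(\bm{y}-\bm{e}_B)$ over subsets $B\subseteq\{1,\dots,n\}$. I would argue by induction on the number of components $n$. In the base/inductive step, if $\bm{s}\in\G_{HF}(\L)$ then $H_{\L}(\bm{y})=0$ for all $\bm{y}\succeq\bm{s}$, and by Lemma~\ref{shape 2} the corresponding statement holds for every proper sublink $\L\setminus L_B$; so each term $H_{\L}(\bm{y}-\bm{e}_B)$ in \eqref{computation of h-function 1} vanishes once $\bm{y}\succ\bm{s}$ (the shift by $\bm{e}_B$ keeps us in the cone). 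Hence $\chi(HFL^{-}(\L,\bm{y}))=0$ for $\bm{y}\succ\bm{s}$, which by \eqref{computation 3} says exactly that the coefficient of $t_1^{y_1}\cdots t_n^{y_n}$ in $\tilde{\Delta}_{\L}$ is zero. The sublink conditions on $\tilde{\Delta}_{\L\setminus L_B}$ come from applying the inductive hypothesis to the sublinks, using Lemma~\ref{shape 2} to pass from $\bm{s}\in\G_{HF}(\L)$ to $\bm{s}\setminus s_B\in\G_{HF}(\L\setminus L_B)$.

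For the converse, I would reconstruct the vanishing of the $H$-function from the coefficient conditions. Given that all the stated coefficients of $\tilde{\Delta}_{\L}$ and of the sublink polynomials $\tilde{\Delta}_{\L\setminus L_B}$ vanish above $\bm{s}$, Theorem~\ref{computation 4} expresses $H_{\L}(\bm{s})$ as a finite alternating sum over sublinks $\L'\subseteq\L$ of tail sums $\sum_{\bm{u'}\succeq\pi_{\L'}(\bm{s}+\bm{1})}\chi(HFL^{-}(\L',\bm{u'}))$. Each such tail sum is, by \eqref{computation 3}, a sum of coefficients of $\tilde{\Delta}_{\L'}$ over a region strictly above $\pi_{\L'}(\bm{s})$, and all of these coefficients are assumed to vanish. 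Therefore every summand in \eqref{converse computation of h-function} is zero, giving $H_{\L}(\bm{s})=0$, i.e.\ $\bm{s}\in\G_{HF}(\L)$.

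The main obstacle I anticipate is purely bookkeeping: making sure the index-shifts line up correctly between the two conventions—the $H$-function evaluated at $\bm{s}$ versus coefficients indexed by $\bm{y}\succ\bm{s}$—and tracking the half-integer shift $(t_1\cdots t_n)^{1/2}$ in the definition of $\tilde{\Delta}_{\L}$ for $n>1$ so that the lattice $\H(\L)=\Z^n$ (valid here since the linking numbers vanish) aligns with integer exponents. I would also need to be careful that the sublink conditions are stated for \emph{all} $B$, including $B=\emptyset$ (the full link) and proper sublinks, so that the inclusion-exclusion in both \eqref{computation of h-function 1} and \eqref{converse computation of h-function} is fully controlled; the induction on $n$ via Lemma~\ref{shape 2} is what cleanly organizes these sublink contributions and prevents the combinatorics from getting out of hand.
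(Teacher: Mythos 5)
Your backward direction (all the stated coefficients vanish $\Rightarrow \bm{s}\in \G_{HF}(\L)$) is essentially the paper's ``if'' argument: by \eqref{computation 3} the tail sums in Theorem \ref{computation 4} are sums of coefficients of $\tilde{\Delta}_{\L'}$ over the strict cone above $\pi_{\L'}(\bm{s})$, all of which vanish by hypothesis, so $H_{\L}(\bm{s})=0$. Your forward direction, however, takes a genuinely different route. The paper proves it by contraposition: if $\bm{s}\notin \G_{HF}(\L)$, Lemma \ref{shape 3} produces either a maximal lattice point $\bm{z}\succeq \bm{s}$ or a bad sublink, and at a maximal point the inclusion-exclusion formula \eqref{computation of h-function 1} evaluates $\chi(HFL^{-}(\L, \bm{z}+\bm{1}))$ to $\pm 1$, exhibiting an explicit nonzero coefficient of $\tilde{\Delta}_{\L}$ at $\bm{z}+\bm{1}$. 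You instead argue directly: Lemma \ref{h-function increase} forces $H_{\L}\equiv 0$ on the whole cone above $\bm{s}$, so every term $H_{\L}(\bm{y}-\bm{e}_{B})$ in \eqref{computation of h-function 1} vanishes and hence so does $\chi(HFL^{-}(\L, \bm{y}))$, with the sublink conditions handled by induction through Lemma \ref{shape 2}. (One small precision: the terms in \eqref{computation of h-function 1} are $H$-values of the full link, so only monotonicity is needed there; Lemma \ref{shape 2} enters only for the $\tilde{\Delta}_{\L\setminus L_{B}}$ conditions.) Your route is shorter and avoids the structure theory of maximal lattice points entirely; the paper's route yields slightly more, namely that each maximal $\bm{z}$ contributes a coefficient $\pm 1$ at $\bm{z}+\bm{1}$, which is what lets one read $\G_{HF}$ off the support of $\tilde{\Delta}$ in the examples.

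One point you dismissed as bookkeeping is actually load-bearing: your parenthetical ``the shift by $\bm{e}_{B}$ keeps us in the cone'' is true only if $\bm{y}\succ \bm{s}$ is read as $\bm{y}\succeq \bm{s}+\bm{1}$, i.e.\ strict in \emph{every} coordinate. Under the paper's stated convention ($\succeq$ and $\neq$), take $\bm{y}=\bm{s}+\bm{e}_{1}$ and $B=\lbrace 2\rbrace$: then $\bm{y}-\bm{e}_{B}\not\succeq \bm{s}$ and the term need not vanish. But the lemma itself holds only with the strict reading: for the Whitehead link $\L_{1}$ one has $(1,0)\in \G_{HF}(\L_{1})$ while $\tilde{\Delta}=-(t_{1}-1)(t_{2}-1)$ has coefficient $-1$ at $(1,1)$, and $(1,1)\succ (1,0)$ in the weak sense. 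The paper's own proof likewise only produces nonvanishing coefficients at points $\bm{z}+\bm{1}\succeq \bm{s}+\bm{1}$, so with the strict-cone reading of Lemma \ref{determine Alexander polynomial} your cone-shift step is valid and your proof goes through; you should just state that convention explicitly rather than leave it implicit.
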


\begin{proof}
For the ``if " part, observe that $\chi(HFL^{-}(\L, \bm{y}))=0$  and $\chi(HFL^{-}(\L\setminus L_{B}), \bm{y}\setminus y_{B})=0$ for all $\bm{y}\succ \bm{s}$ and $B\subset \lbrace 1, \cdots, n \rbrace$ by Equation \eqref{computation 3}. Then $H_{\L}(\bm{s})=0$ by Theorem \ref{computation 4}, and $\bm{s}\in \G_{HF}(\L)$.  For the ``only if " part, suppose that $\bm{s}\notin \G_{HF}(\L)$.  By Lemma \ref{shape 3}, either there exists a maximal vector $\bm{z}\notin \G_{HF}(\L)$ such that $\bm{s}\preceq \bm{z}$ or there exists some $1\leq j \leq n$, such that $\bm{s}\setminus s_{j}\notin \G_{HF}(\L\setminus L_{j})$. We claim that for all maximal lattice points $\bm{z}$, $\chi(HFL^{-}(\L, \bm{z}+\bm{1}))\neq 0$. Since $\bm{z}$ is maximal, $h_{\L}(\bm{z})=1$, and for any subset $B\subset \lbrace 1, \cdots, n\rbrace$, $h_{\L}(\bm{z}+\bm{e}_{B})=0$. By Equation \eqref{computation of h-function 1}, $\chi(HFL^{-}(\L, \bm{z}+\bm{1}))=(-1)^{n}\neq 0$. If $\bm{s}\preceq \bm{z}$, the coefficient of $\bm{z}+\bm{1}\succ \bm{s}$ in $\tilde{\Delta}_{\L}(t_{1}, \cdots , t_{n})$ equals 0 which contradicts to our assumption. If $\bm{s}\setminus s_{i}\notin \G_{HF}(\L\setminus L_{i})$,  we use the induction to get a contradiction.

\end{proof}

\subsection{The Heegaard Floer link homology}
\label{heegaard floer homology}

 Ozsv\'ath and Szab\'o associated the multi-graded link invariants $HFL^{-}(\L)$ and $\widehat{HFL}(\L)$ to links $\L\subset S^{3}$ where $HFL^{-}(L)$ was defined in Equation \eqref{link Floer}, and $\widehat{HFL}(L)$ is defined as follows \cite{ Hom, SZ3}:
$$\widehat{HFL}(\L, \bm{s})=H_{\ast}\left(A^{-}(\L, \bm{s})/\left[\sum_{i=1}^{n}A^{-}(\bm{s}-\bm{e}_{i})\oplus \sum_{i=1}^{n}U_{i}A^{-}(\bm{s}+\bm{e}_{i})\right]\right).$$

If $\L$ is an $L$-space link, there exist spectral sequences converging to $HFL^{-}(\L)$ and $\widehat{HFL}(\L)$ respectively \cite{Hom, Gor}. 

\begin{proposition}\cite[Theorem 1.5.1]{Gor}
\label{spectral sequence 1}
For an oriented $L$-space link $\L\subset  S^{3}$ with n components and $\bm{s}\in \H(\L)$, there exists a spectral sequence with $E_{\infty}=HFL^{-}(\L, \bm{s})$ and 
$$E_{1}=\bigoplus_{B\subset \lbrace 1, \cdots, n \rbrace} H_{\ast}(A^{-}(\L, \bm{s}-\bm{e}_{B})), $$
where the differential in $E_{1}$ is induced by inclusions.

\end{proposition}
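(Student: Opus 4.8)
The plan is to present $HFL^{-}(\L,\bm{s})$ as the homology of a single quotient complex and then resolve that quotient by a cube of the nested subcomplexes $A^{-}(\L,\bm{s}-\bm{e}_{B})$. First I would rewrite the defining formula \eqref{link Floer}. Every $\bm{v}\prec\bm{s}$ satisfies $\bm{v}\preceq\bm{s}-\bm{e}_{i}$ for at least one index $i$ (any coordinate with $v_{i}<s_{i}$), so $\sum_{\bm{v}\prec\bm{s}}A^{-}(\L,\bm{v})=\sum_{i=1}^{n}A^{-}(\L,\bm{s}-\bm{e}_{i})$ and hence
$$HFL^{-}(\L,\bm{s})=H_{\ast}\!\left(A^{-}(\L,\bm{s})\Big/\sum_{i=1}^{n}A^{-}(\L,\bm{s}-\bm{e}_{i})\right).$$
Writing $A_{B}:=A^{-}(\L,\bm{s}-\bm{e}_{B})$, so that $A_{\emptyset}=A^{-}(\L,\bm{s})$ and $A_{i}:=A_{\{i\}}$, the second fact I would record is the intersection rule $\bigcap_{i\in B}A_{i}=A_{B}$, which holds because the filtration level of a generator is its Alexander multigrading and lying weakly below $\bm{s}-\bm{e}_{i}$ for every $i\in B$ is the same condition as lying weakly below $\bm{s}-\bm{e}_{B}$.

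Next I would organize the $A_{B}$, $B\subseteq\{1,\dots,n\}$, into an $n$-dimensional cube of chain complexes whose edges are the inclusions $A_{B\cup\{i\}}\hookrightarrow A_{B}$, and form its total complex $\mathrm{Tot}$ (the iterated mapping cone). The central homological claim is that $\mathrm{Tot}$ is quasi-isomorphic to $A_{\emptyset}/\sum_{i}A_{i}$, the quotient computing $HFL^{-}(\L,\bm{s})$. I would prove this by induction on $n$: for $n=1$ the cone of $A_{1}\hookrightarrow A_{\emptyset}$ is quasi-isomorphic to $A_{\emptyset}/A_{1}$; for the inductive step I split the cube along a fixed index $n$ into the two parallel $(n-1)$-subcubes indexed by the subsets not containing, respectively containing, $n$, apply the inductive hypothesis to identify their total complexes with $A_{\emptyset}/\sum_{i<n}A_{i}$ and $A_{n}/\sum_{i<n}(A_{i}\cap A_{n})$, and check that the induced map between these two quotient complexes is a levelwise injection with cokernel $A_{\emptyset}/\sum_{i}A_{i}$. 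Granting the claim, I filter $\mathrm{Tot}$ by the cube degree $|B|$; the filtration is bounded, so the associated spectral sequence converges, with
$$E_{1}=\bigoplus_{B\subseteq\{1,\dots,n\}}H_{\ast}\big(A^{-}(\L,\bm{s}-\bm{e}_{B})\big),$$
the $E_{1}$-differential being the alternating sum of the maps induced on homology by the edge inclusions, and $E_{\infty}$ the associated graded of $H_{\ast}(\mathrm{Tot})=HFL^{-}(\L,\bm{s})$. This is precisely the asserted statement.

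The hard part will be the injectivity in the inductive step, which is exactly the exactness of the \v{C}ech (Mayer--Vietoris) resolution of $\sum_{i}A_{i}$ by the intersections $A_{B}$. Concretely it reduces to the distributivity identity $A_{n}\cap\sum_{i<n}A_{i}=\sum_{i<n}(A_{n}\cap A_{i})$, which for $n\ge 3$ fails for arbitrary submodules and must be earned. I would earn it from the fact that $CF^{-}(S^{3})$ carries a basis of intersection points, each with a well-defined Alexander multigrading: every $A_{B}$ is then spanned by those basis elements whose multigrading lies weakly below $\bm{s}-\bm{e}_{B}$, so intersections and sums among the $A_{B}$ correspond to intersections and unions of subsets of a fixed basis, and such a lattice is automatically distributive. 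The remaining points are bookkeeping: the whole construction is $\F[U]$-equivariant and compatible with the Maslov grading up to the standard shifts. Finally, although the spectral sequence exists for any link, the $L$-space hypothesis is what makes it usable, since it forces $H_{\ast}(A^{-}(\L,\bm{s}-\bm{e}_{B}))\cong\F[[U]]$ for every $B$, so that the $E_{1}$-page becomes an explicit direct sum of $U$-towers.
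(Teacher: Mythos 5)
Correct. The paper does not reprove this result but cites \cite{Gor}, and your argument --- resolving the quotient $A^{-}(\L,\bm{s})/\sum_{i}A^{-}(\L,\bm{s}-\bm{e}_{i})$ by the total complex of the cube of inclusions, filtering by the cube degree $|B|$, and securing exactness of the resulting \v{C}ech-type resolution via the distributivity of sums and intersections of subcomplexes spanned by subsets of a fixed multigraded basis of $CF^{-}(S^{3})$ --- is essentially the construction in the cited source, including your correct observation that the $L$-space hypothesis is needed only to identify each $H_{\ast}(A^{-}(\L,\bm{s}-\bm{e}_{B}))$ with a single $U$-tower on the $E_{1}$-page, as in the paper's remark following the proposition.
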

\begin{remark}
Precisely, the differential $\partial_{1}$ in the $E_{1}$-page is 
$$\partial_{1}(z(\bm{s}-\bm{e}_{B}))=\sum_{i\in B}U^{H(\bm{s}-\bm{e}_{B})-H(\bm{s}-\bm{e}_{B}+\bm{e}_{i})}z(\bm{s}-\bm{e}_{B}+\bm{e}_{i}),$$
where $z(\bm{s}-\bm{e}_{B})$ denotes the unique generator in $H_{\ast}(A^{-}(\L, \bm{s}-\bm{e}_{B}))$ with the homological grading $-2H(\bm{s}-\bm{e}_{B})$. 

\end{remark}

\begin{proposition}\cite[Proposition 3.8]{Hom}
\label{spectral sequence 2}
For an $L$-space link $\L\subset S^{3}$ with n components and $\bm{s}\in \H(\L)$, there exists a spectral sequence converging to $\hat{E}_{\infty}=\widehat{HFL}(\L, \bm{s})$ with $E_{1}$ page
$$\hat{E}_{1}=\bigoplus_{B\subset \lbrace 1, \cdots, n \rbrace}  HFL^{-}(\L, \bm{s}+\bm{e}_{B}).$$

\end{proposition}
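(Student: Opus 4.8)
The plan is to realize the chain complex computing $\widehat{HFL}(\L,\bm{s})$ as the total complex of an $n$-dimensional hypercube whose vertices carry the complexes computing $HFL^{-}(\L,\bm{s}+\bm{e}_{B})$, and then to take the spectral sequence associated to the filtration by the cube coordinate. This mirrors the argument behind Proposition \ref{spectral sequence 1}: there the complex for $HFL^{-}(\L,\bm{s})$ is resolved by an iterated mapping cone of the inclusions $A^{-}(\L,\bm{s}-\bm{e}_{B})\hookrightarrow A^{-}(\L,\bm{s})$, giving $E_{1}=\bigoplus_{B}H_{\ast}(A^{-}(\L,\bm{s}-\bm{e}_{B}))$. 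Here I would run the same machine one level up, replacing $A^{-}$ by $HFL^{-}$ and replacing the inclusion maps by the maps induced by the $U_{i}$-actions.

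First I would unwind the definition of $\widehat{HFL}(\L,\bm{s})$. Writing $CFL^{-}(\L,\bm{s})=A^{-}(\L,\bm{s})/\sum_{i}A^{-}(\L,\bm{s}-\bm{e}_{i})$ for the complex of Equation \eqref{link Floer} (so that $H_{\ast}=HFL^{-}(\L,\bm{s})$), the definition of $\widehat{HFL}$ exhibits $\widehat{CFL}(\L,\bm{s})$ as the further quotient of $CFL^{-}(\L,\bm{s})$ by the image of $\sum_{i}U_{i}A^{-}(\L,\bm{s}+\bm{e}_{i})$. Since $U_{i}A^{-}(\L,\bm{s}+\bm{e}_{i})\subseteq A^{-}(\L,\bm{s})$, this image is the sum of the images of the chain maps induced by the $U_{i}$. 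The key step is to resolve this quotient by an inclusion--exclusion (Koszul-type) construction over the subsets $B\subseteq\{1,\dots,n\}$: the pieces $\big(\prod_{i\in B}U_{i}\big)A^{-}(\L,\bm{s}+\bm{e}_{B})$ and their mutual intersections organize into an $n$-cube, and the subquotient at vertex $B$ is precisely the complex computing $HFL^{-}(\L,\bm{s}+\bm{e}_{B})$. Taking the total complex of this cube yields a complex quasi-isomorphic to $\widehat{CFL}(\L,\bm{s})$.

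Finally I would filter the total complex by $|B|$. Each edge map of the cube is induced by a single $U_{i}$ and changes $|B|$ by one, so the associated graded splits as $\bigoplus_{B}CFL^{-}(\L,\bm{s}+\bm{e}_{B})$, and its homology gives $\hat{E}_{1}=\bigoplus_{B}HFL^{-}(\L,\bm{s}+\bm{e}_{B})$, with $d_{1}$ induced by the $U_{i}$-maps between adjacent $HFL^{-}$ groups. Because the cube is finite, the filtration is bounded and the spectral sequence converges, so $\hat{E}_{\infty}=\widehat{HFL}(\L,\bm{s})$. The main obstacle is the first identification: verifying that the quotient defining $\widehat{HFL}$ really is the total complex of this cube with the correct edge maps, which requires checking the compatibility of the $U_{i}$-maps with the inclusion quotients $A^{-}(\L,\bm{s}-\bm{e}_{i})$ (working over $\F=\Z/2\Z$ there are no signs to track, but the cube relations $U_{i}U_{j}=U_{j}U_{i}$ and the passage to subquotients must be confirmed). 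The $L$-space hypothesis is what makes the outcome useful --- it forces each $H_{\ast}(A^{-}(\L,\bm{s}+\bm{e}_{B}))\cong\F[[U]]$ and pins down the $U_{i}$-maps as explicit powers of $U$ --- but the cube structure and the convergence of the spectral sequence are formal and do not rely on it.
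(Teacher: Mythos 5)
The paper does not actually prove this statement---it is quoted from \cite[Proposition 3.8]{Hom}---and your argument is essentially the one behind that reference: resolve the quotient defining $\widehat{HFL}(\L,\bm{s})$ by the $n$-dimensional cube (Koszul complex) of $U_{i}$-maps on the complexes computing $HFL^{-}(\L,\bm{s}+\bm{e}_{B})$, filter by $|B|$ to obtain the stated $\hat{E}_{1}$ with $d_{1}$ induced by the $U_{i}$-actions, and conclude convergence from boundedness of the filtration. The one substantive verification, which you correctly isolate as the main obstacle, is that the total complex of the cube is quasi-isomorphic to the quotient complex defining $\widehat{HFL}$; this goes through because each $A^{-}(\L,\bm{v})$ is a monomial-type submodule of the free $\F[U_{1},\dots,U_{n}]$-module $CF^{-}(S^{3})$ (spanned by elements $U^{\bm{a}}x$ subject to coordinatewise Alexander constraints), so sums and intersections of the subobjects $\bigl(\prod_{i\in B}U_{i}\bigr)A^{-}(\L,\bm{s}+\bm{e}_{B})$ distribute and the cube does resolve the quotient, as you anticipated.
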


There is a nice symmetric property of $\widehat{HFL}(\L)$ proved by Oszv\'ath and Szab\'o \cite{SZ5}:
\begin{equation}
\label{symmetry}
\widehat{HFL}_{\ast}(\L, \bm{s})\cong \widehat{HFL}_{\ast}(\L, -\bm{s}).
\end{equation}

\section{The proof of the Main theorem}
\label{main proof}

\subsection{The Spin$^{c}$-cobordism}
In this section, we use $\L=L_{1}\cup \cdots \cup L_{n} \subset S^{3}$  to denote an oriented link  with vanishing pairwise linking numbers. Suppose that link components $L_{i}$ bound pairwise disjoint smoothly embedded surfaces  $\Sigma_{i}$ of genera $g_{i}$ in $B^{4}$ for all $1\leq i\leq n$. Attach $n$ 2-handles to the $4$-ball $B^{4}$ along $L_{1}, L_{2}\cdots, L_{n}$ with framings $-p_{1}, -p_{2}, \cdots, -p_{n}$. We obtain a 2-handlebody $W$ with boundary $\partial W=S^{3}_{-p_{1},\cdots, -p_{n}}(\L)$ which is the $3$-manifold obtained by doing surgery along $L_{1}, L_{2}, \cdots, L_{n}$ with surgery coefficients $-p_{1}, -p_{2}, \cdots, -p_{n}$ respectively. The linking matrix $\Lambda$ is a diagonal matrix with $\lambda_{ii}=-p_{i}$. Observe that $\det (\Lambda)\neq 0$, then $S^{3}_{-p_{1}, \cdots, -p_{n}}(\L)$ is a rational homology sphere. 

Let $\Sigma'_{i}$ be the closed surface in $W$ which is the union of $\Sigma_{i}$ and the core of the 2-handle attached along $L_{i}$. Then $\Sigma'_{i}$ are also pairwise disjoint. Observe that $W$ is homotopy equivalent to the wedge of $n$ copies of $S^{2}$. Thus, 
$H_{2}(W)=\Z^{n}$  
and $[\Sigma'_{i}]$ are generators of  $H_{2}(W)$.  The self intersection number of each $\Sigma'_{i}$ in $W$ is $-p_{i}$. 

Take   small tubular neighborhoods $nd(\Sigma'_{i})$ of $\Sigma'_{i}$ such that they are also pairwise disjoint. Then $nd(\Sigma'_{i})$ is a disk bundle over $\Sigma'_{i}$
and its boundary $\partial(nd(\Sigma'_{i}))$ is a circle bundle $B_{-p_{i}}$ with Euler number $-p_{i}$. The boundary connected sum $\mathfrak{D}$ of the disk bundles over $\Sigma'_{i}$ in $W$ is obtained by identifying  smoothly embedded balls $B^{3}_{i}\subset B_{-p_{i}}$ and $B^{3}_{i+1}\subset B_{-p_{i+1}}$ for $1\leq i\leq n-1$, and $\mathfrak{D}$ is also a smooth oriented manifold \cite[Section 6.3]{Ant}. Observe that $\mathfrak{D}$ has the homotopy type of $D_{-p_{1}}\vee \cdots \vee D_{-p_{n}}$ where $D_{-p_{i}}$ denotes the disk bundle over $\Sigma'_{i}$. Since $D_{-p_{i}}$ is homotopy equivalent to $\Sigma'_{i}$, then 
$$\widetilde{H}_{i}(\mathfrak{D})\cong \bigoplus_{i=1}^{n}\widetilde{H_{i}}(\Sigma'_{i}).$$

Let $X$ denote the complement of $\mathfrak{D}$ in $W$.  It is a cobordism from   $B_{-p_{1}}\# \cdots \# B_{-p_{n}}$ to $S^{3}_{-p_{1}, \cdots, -p_{n}}(\L)$. Let $\bar{X}$ be the cobordism from $S^{3}_{-p_{1}, \cdots, -p_{n}}(\L)$ to $ B_{-p_{1}}\# \cdots \# B_{-p_{n}}$ obtained by reversing the orientation of $X$. 

\begin{proposition}
\label{circle bundle cohomology}
For a circle bundle $B_{-m}$ over a closed oriented surface of genus $g$ and Euler number $-m<0$, its cohomology is the following:
$$H^{1}(B_{-m})\cong \mathbb{Z}^{2g}, \quad H^{2}(B_{-m})\cong \mathbb{Z}^{2g}\oplus \mathbb{Z}_{m}, \quad H^{3}(B_{-m})\cong \mathbb{Z}.$$

\end{proposition}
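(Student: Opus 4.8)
The plan is to compute the cohomology of $B_{-m}$ directly from the Gysin sequence of the oriented circle bundle $S^{1}\hookrightarrow B_{-m}\xrightarrow{\pi}\Sigma_{g}$, where $\Sigma_{g}$ is the closed oriented base surface of genus $g$. Since the bundle is oriented with Euler number $-m$, its Euler class $e\in H^{2}(\Sigma_{g})\cong\mathbb{Z}$ equals $-m$ times the fundamental generator. I would write the Gysin sequence in the form
$$\cdots \to H^{k-2}(\Sigma_{g})\xrightarrow{\cup e}H^{k}(\Sigma_{g})\xrightarrow{\pi^{*}}H^{k}(B_{-m})\xrightarrow{\pi_{!}}H^{k-1}(\Sigma_{g})\xrightarrow{\cup e}H^{k+1}(\Sigma_{g})\to\cdots,$$
where $\pi_{!}$ is integration along the fiber, and then feed in the known cohomology of the surface, namely $H^{0}(\Sigma_{g})\cong H^{2}(\Sigma_{g})\cong\mathbb{Z}$, $H^{1}(\Sigma_{g})\cong\mathbb{Z}^{2g}$, and $H^{k}(\Sigma_{g})=0$ for $k\geq 3$.

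The crucial input is the behavior of the Euler-class map. The map $\cup e\colon H^{0}(\Sigma_{g})\to H^{2}(\Sigma_{g})$ is multiplication by $\pm m$ (up to the sign of the chosen generators), which, since $m>0$, is injective with cokernel $\mathbb{Z}_{m}$; every other instance of $\cup e$ has either source or target zero and is therefore trivial. With this in hand the computation proceeds degree by degree. For $k=1$, the connecting map $\pi_{!}\colon H^{1}(B_{-m})\to H^{0}(\Sigma_{g})$ has image contained in $\ker(\cup e)=0$, so $\pi^{*}$ identifies $H^{1}(B_{-m})$ with $H^{1}(\Sigma_{g})\cong\mathbb{Z}^{2g}$. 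For $k=2$, exactness produces a short exact sequence
$$0\to\operatorname{coker}\bigl(\cup e\colon H^{0}\to H^{2}\bigr)\to H^{2}(B_{-m})\to\ker\bigl(\cup e\colon H^{1}\to H^{3}\bigr)\to 0,$$
which reads $0\to\mathbb{Z}_{m}\to H^{2}(B_{-m})\to\mathbb{Z}^{2g}\to 0$; since $\mathbb{Z}^{2g}$ is free this splits, giving $H^{2}(B_{-m})\cong\mathbb{Z}^{2g}\oplus\mathbb{Z}_{m}$. For $k=3$, the map $\pi_{!}\colon H^{3}(B_{-m})\to H^{2}(\Sigma_{g})$ is an isomorphism, because its source contribution $H^{3}(\Sigma_{g})$ vanishes and the outgoing map $\cup e\colon H^{2}\to H^{4}$ also vanishes, so $H^{3}(B_{-m})\cong\mathbb{Z}$.

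I do not expect any genuine obstacle here; this is a standard Gysin-sequence computation. The only two points that require care are (i) correctly identifying the Euler-class multiplication as multiplication by $m$ up to sign, so that its cokernel is exactly the torsion summand $\mathbb{Z}_{m}$, and (ii) invoking freeness of $\mathbb{Z}^{2g}$ to split the degree-two short exact sequence. As a consistency check one may compare the answer against Poincar\'e duality for the closed oriented $3$-manifold $B_{-m}$: one has $H^{3}\cong H_{0}\cong\mathbb{Z}$ and the free ranks of $H^{1}$ and $H^{2}$ agree, while the torsion summand of $H^{2}$ matches $H_{1}(B_{-m})\cong\mathbb{Z}^{2g}\oplus\mathbb{Z}_{m}$ through the universal coefficient theorem.
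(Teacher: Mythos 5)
Your proposal is correct and follows essentially the same route as the paper, which also computes the cohomology from the Gysin sequence (phrased there via the Leray sequence) of $S^{1}\hookrightarrow B_{-m}\to\Sigma_{g}$, using injectivity of $\cup e$ with cokernel $\mathbb{Z}_{m}$ and freeness of $\mathbb{Z}^{2g}$ to split the degree-two sequence. The only cosmetic difference is that the paper gets $H^{3}(B_{-m})\cong\mathbb{Z}$ directly from $B_{-m}$ being closed and oriented rather than from the $k=3$ portion of the sequence.
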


\begin{proof}

For the circle bundle $B_{-m}$,   we have the following long exact sequence by using the  Leray sequence: 
$$0\rightarrow H^{1}(\Sigma_{g})\rightarrow H^{1}(B_{-m})\rightarrow H^{0}(\Sigma_{g})\xrightarrow {\wedge e} H^{2}(\Sigma_{g})\rightarrow H^{2}(B_{-m})\rightarrow H^{1}(\Sigma_{g})\rightarrow 0$$
where $e$ is the Euler class. Then we compute that
$$0\rightarrow \mathbb{Z}^{2g} \rightarrow H^{1}(B_{-m}) \rightarrow \mathbb{Z} \xrightarrow {\cdot m} \mathbb{Z}\rightarrow H^{2}(B_{-m})\rightarrow \mathbb{Z}^{2g}\rightarrow 0.$$
Thus,  $H^{1}(B_{-m})\cong \mathbb{Z}^{2g}$ and we have the following short exact sequence:
$$0\rightarrow \mathbb{Z}_{m}\rightarrow H^{2}(B_{-m})\rightarrow \mathbb{Z}^{2g}\rightarrow 0. $$
Since $ \mathbb{Z}^{2g}$ is free, the exact sequence splits and $H^{2}(B_{-m})\cong \mathbb{Z}^{2g}\oplus \mathbb{Z}_{m}$.  The circle bundle $B_{-m}$ is oriented and closed, so $H^{3}(B_{-m})\cong \mathbb{Z}$. 

\end{proof}

\begin{lemma}
 \label{connect sum homology}
Suppose that $M_{1}$ and $M_{2}$ are closed, connected and oriented smooth $n$-dimensional manifolds. Then $$H^{i}(M_{1}\# M_{2})\cong H^{i}(M_{1})\oplus H^{i}(M_{2}), \textup{ for } i\neq 0 \textup{ and } n, $$and $H^{0}(M_{1}\# M_{2})\cong H^{n}(M_{1}\# M_{2})\cong \Z$.\end{lemma}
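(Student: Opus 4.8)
The plan is to realize the connected sum as a union of two pieces and run Mayer--Vietoris in cohomology. I would write $M_{1}\#M_{2}=A\cup B$, where $A$ is $M_{1}$ with a small open $n$-ball removed, $B$ is $M_{2}$ with a small open $n$-ball removed, and $A\cap B$ is a collar of the gluing sphere; thus $A\simeq M_{1}\setminus\{p\}$, $B\simeq M_{2}\setminus\{p\}$, and $A\cap B\simeq S^{n-1}$. Note that $M_{1}\#M_{2}$ is again a closed, connected, oriented $n$-manifold, so $H^{0}(M_{1}\#M_{2})\cong H^{n}(M_{1}\#M_{2})\cong\Z$ is immediate from connectedness and orientability; the real content is the middle range $0<i<n$.

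First I would record the cohomology of a once-punctured closed manifold. Using the long exact sequence of the pair $(M_{j},M_{j}\setminus\{p\})$, excision, and the local identification $H^{i}(M_{j},M_{j}\setminus\{p\})\cong H^{i}(D^{n},\partial D^{n})$ (which is $\Z$ for $i=n$ and $0$ otherwise), I would obtain $H^{i}(M_{j}\setminus\{p\})\cong H^{i}(M_{j})$ for $0\le i\le n-1$ and $H^{n}(M_{j}\setminus\{p\})=0$. The only subtle point is degree $n$, where the restriction $H^{n}(M_{j},M_{j}\setminus\{p\})\to H^{n}(M_{j})$ is an isomorphism $\Z\to\Z$ because a local orientation class generates the fundamental cohomology class.

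I would then substitute these into the Mayer--Vietoris sequence
$$\cdots\to H^{i-1}(S^{n-1})\to H^{i}(M_{1}\#M_{2})\to H^{i}(A)\oplus H^{i}(B)\to H^{i}(S^{n-1})\to\cdots,$$
using $H^{i}(A)\oplus H^{i}(B)\cong H^{i}(M_{1})\oplus H^{i}(M_{2})$ for $i\le n-1$, $H^{n}(A)\oplus H^{n}(B)=0$, and (for $n\ge2$) $H^{i}(S^{n-1})=\Z$ exactly when $i=0,n-1$. In the range $1\le i\le n-2$ both adjacent sphere terms vanish, so the sequence collapses to the desired isomorphism $H^{i}(M_{1}\#M_{2})\cong H^{i}(M_{1})\oplus H^{i}(M_{2})$.

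The hard part will be the top of the range, where degrees $n-1$ and $n$ could interact through the restriction map $\phi\colon H^{n-1}(M_{1})\oplus H^{n-1}(M_{2})\to H^{n-1}(S^{n-1})\cong\Z$. To kill $\phi$, I would use the tail of the sequence: the connecting map $\delta\colon H^{n-1}(S^{n-1})\to H^{n}(M_{1}\#M_{2})$ is surjective since the next term $H^{n}(A)\oplus H^{n}(B)$ vanishes, and as both groups are $\Z$ and $H^{n}(M_{1}\#M_{2})\cong\Z$ is already known, this surjection $\Z\to\Z$ must be an isomorphism; hence $\operatorname{im}\phi=\ker\delta=0$. (Conceptually, $\phi=0$ because the separating sphere bounds in each punctured piece, so it is null-homologous there and the restriction of $(n-1)$-cohomology to it vanishes.) Exactness then yields $H^{n-1}(M_{1}\#M_{2})\cong H^{n-1}(M_{1})\oplus H^{n-1}(M_{2})$; injectivity here is clear for $n\ge3$ since the incoming term $H^{n-2}(S^{n-1})$ vanishes, while for $n=2$ one notes the preceding connecting map out of $H^{0}(S^{1})$ is zero because $H^{0}(A)\oplus H^{0}(B)\to H^{0}(S^{1})$ is onto. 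Together with $H^{0}\cong H^{n}\cong\Z$, this proves the lemma.
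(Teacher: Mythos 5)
The paper states this lemma with no proof at all, treating it as a standard fact (cf.\ Hatcher's textbook, which appears in the paper's bibliography), so there is no argument of the paper to compare yours against; what you have written is the standard Mayer--Vietoris proof, and it is essentially correct. Your decomposition, the computation of the cohomology of the punctured pieces via the long exact sequence of the pair $(M_{j},M_{j}\setminus\{p\})$ together with excision, and in particular the two places where the argument has genuine content are all handled properly: in degree $n$ you correctly use orientability to see that $H^{n}(M_{j},M_{j}\setminus\{p\})\to H^{n}(M_{j})$ is an isomorphism $\Z\to\Z$ (exactly where the statement would fail for nonorientable manifolds, where $H^{n}(M_{j};\Z)\cong\Z/2$), and in degree $n-1$ you correctly force $\phi=0$ by noting that the connecting map $\delta\colon H^{n-1}(S^{n-1})\to H^{n}(M_{1}\# M_{2})$ is a surjection between two copies of $\Z$, hence an isomorphism, so $\operatorname{im}\phi=\ker\delta=0$. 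Your parenthetical geometric explanation (the separating sphere satisfies $[S^{n-1}]=\partial[A]$, so it is null-homologous in each punctured piece and pairing shows the restriction of $(n-1)$-classes to it vanishes) is also correct and arguably more illuminating than the purely diagrammatic argument.

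One small misstatement: in the range $1\le i\le n-2$ it is not true that ``both adjacent sphere terms vanish'' when $i=1$, since $H^{0}(S^{n-1})\cong\Z$. This does not break the proof, because the argument you invoke for the $n=2$ case applies verbatim for every $n$ at $i=1$: the map $H^{0}(A)\oplus H^{0}(B)\to H^{0}(S^{n-1})$ is surjective (each restriction is an isomorphism $\Z\to\Z$ because $A$, $B$, and $S^{n-1}$ are connected), so by exactness the connecting map out of $H^{0}(S^{n-1})$ vanishes and $H^{1}(M_{1}\# M_{2})\to H^{1}(M_{1})\oplus H^{1}(M_{2})$ is injective, with surjectivity coming from $H^{1}(S^{n-1})=0$ when $n\ge3$. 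Alternatively, running the reduced Mayer--Vietoris sequence eliminates the degree-zero terms and this case distinction entirely. With that one-line correction your proof is complete.
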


\begin{corollary}
The cohomology of $\#_{i=1}^{n} B_{-p_{i}}$ is as follows:
$$H^{1}(\#_{i=1}^{n} B_{-p_{i}})\cong \Z^{2g_{1}+\cdots +2g_{n}}, \quad H^{2}(\#_{i=1}^{n} B_{-p_{i}})\cong \Z^{2g_{1}+\cdots +2g_{n}}\oplus \Z_{p_{1}}\cdots \oplus \Z_{p_{n}}, $$
and $H^{0}(\#_{i=1}^{n} B_{-p_{i}})\cong H^{3}(\#_{i=1}^{n} B_{-p_{i}})\cong \Z$.

\end{corollary}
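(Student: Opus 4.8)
The plan is to prove this by induction on the number of components $n$, using Proposition \ref{circle bundle cohomology} as the base case and Lemma \ref{connect sum homology} to pass from $n-1$ to $n$. The one preliminary observation needed is that each summand $B_{-p_i}$ in $\Co$ is, by the construction preceding this corollary, a circle bundle with Euler number $-p_i<0$ over the closed oriented surface $\Sigma'_i$ of genus $g_i$; hence Proposition \ref{circle bundle cohomology} applies to each $B_{-p_i}$ with $m=p_i$ and $g=g_i$, giving $H^1(B_{-p_i})\cong \Z^{2g_i}$, $H^2(B_{-p_i})\cong \Z^{2g_i}\oplus \Z_{p_i}$, and $H^3(B_{-p_i})\cong \Z$. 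I would also note at the outset that a connected sum of closed, connected, oriented smooth $3$-manifolds is again closed, connected, oriented, and smooth, so that Lemma \ref{connect sum homology} may be applied iteratively.

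For the base case $n=1$, the statement is exactly Proposition \ref{circle bundle cohomology} applied to $B_{-p_1}$. For the inductive step, I would assume the formula holds for $\#_{i=1}^{n-1}B_{-p_i}$ and write
$$
\Co \cong \left(\#_{i=1}^{n-1}B_{-p_i}\right)\# B_{-p_n}.
$$
Since both summands are closed, connected, oriented smooth $3$-manifolds, Lemma \ref{connect sum homology} (taken in dimension $3$) yields $H^1$ and $H^2$ as direct sums of the corresponding groups of the two pieces, while $H^0$ and $H^3$ remain $\Z$. Substituting the inductive hypothesis for $\#_{i=1}^{n-1}B_{-p_i}$ and Proposition \ref{circle bundle cohomology} for $B_{-p_n}$ then gives, in degree $1$,
$$
H^1(\Co)\cong \Z^{2g_1+\cdots+2g_{n-1}}\oplus \Z^{2g_n}=\Z^{2g_1+\cdots+2g_n},
$$
and, in degree $2$,
$$
H^2(\Co)\cong \left(\Z^{2g_1+\cdots+2g_{n-1}}\oplus \Z_{p_1}\oplus\cdots\oplus\Z_{p_{n-1}}\right)\oplus\left(\Z^{2g_n}\oplus\Z_{p_n}\right),
$$
which rearranges to the claimed $\Z^{2g_1+\cdots+2g_n}\oplus\Z_{p_1}\oplus\cdots\oplus\Z_{p_n}$, completing the induction.

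There is no serious obstacle here; the argument is a routine bookkeeping induction assembling two results already established. The only point requiring mild care is purely notational: Lemma \ref{connect sum homology} is stated for $n$-dimensional manifolds, where $n$ denotes the dimension, whereas in this corollary $n$ denotes the number of link components. I would make explicit that the lemma is invoked with ambient dimension $3$ (so the exceptional degrees are $0$ and $3$), and that the number of connect-sum factors is unrelated to that dimension, to avoid any confusion when the two direct-sum formulas for $H^1$ and $H^2$ are applied.
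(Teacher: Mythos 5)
Your proof is correct and follows exactly the route the paper intends: the paper gives no explicit proof of this corollary, treating it as an immediate consequence of Proposition \ref{circle bundle cohomology} and Lemma \ref{connect sum homology}, and your induction on the number of summands is precisely the routine argument being left to the reader. Your notational caveat about the two uses of $n$ (ambient dimension $3$ versus number of components) is a sensible clarification but changes nothing of substance.
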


\begin{proposition}
\label{cohomology}
For the cobordism $\bar{X}$, its second cohomology has the following  form: $$H^{2}(\bar{X})\cong H^{2}(\#_{i=1}^{n} B_{-p_{i}}) \cong \mathbb{Z}^{2g_{1}+\cdots +2g_{n}}\oplus\mathbb{Z}_{p_{1}}\oplus \cdots \oplus \mathbb{Z}_{p_{n}}.$$ 

\end{proposition}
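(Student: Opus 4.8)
The plan is to compute $H^{2}(X)$ and to note that reversing orientation does not affect cohomology, so $H^{2}(\bar X)\cong H^{2}(X)$. The whole computation runs through the long exact sequence of the pair $(W,X)$, so the first task is to identify the relative groups $H^{k}(W,X)$. Since $\mathfrak{D}$ is a collared neighborhood of the $\Sigma'_{i}$ inside $W$ with frontier $\partial\mathfrak{D}=\Co$ and $X=W\setminus\mathrm{int}\,\mathfrak{D}$, excision gives $H^{k}(W,X)\cong H^{k}(\mathfrak{D},\partial\mathfrak{D})$. Applying Poincar\'e--Lefschetz duality to the compact oriented $4$-manifold $\mathfrak{D}$ then yields $H^{k}(\mathfrak{D},\partial\mathfrak{D})\cong H_{4-k}(\mathfrak{D})$, and the homotopy equivalence $\mathfrak{D}\simeq D_{-p_{1}}\vee\cdots\vee D_{-p_{n}}\simeq\Sigma'_{1}\vee\cdots\vee\Sigma'_{n}$ recorded earlier gives $H_{2}(\mathfrak{D})\cong\Z^{n}$ (with basis the surface classes $[\Sigma'_{i}]$), $H_{1}(\mathfrak{D})\cong\Z^{2g_{1}+\cdots+2g_{n}}$, and $H_{3}(\mathfrak{D})=H_{4}(\mathfrak{D})=0$. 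Hence $H^{1}(W,X)=0$, $H^{2}(W,X)\cong\Z^{n}$, and $H^{3}(W,X)\cong\Z^{2g_{1}+\cdots+2g_{n}}$.

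I would then feed these groups into the long exact sequence of $(W,X)$. Using that $W\simeq\vee_{n}S^{2}$, so $H^{1}(W)=0$, $H^{2}(W)\cong\Z^{n}$ and $H^{3}(W)=0$, the relevant portion becomes
$$H^{2}(W,X)\xrightarrow{\ \alpha\ }H^{2}(W)\longrightarrow H^{2}(X)\longrightarrow H^{3}(W,X)\longrightarrow 0,$$
so that $H^{2}(X)$ sits in a short exact sequence $0\to\operatorname{coker}\alpha\to H^{2}(X)\to\Z^{2g_{1}+\cdots+2g_{n}}\to 0$. Everything therefore reduces to understanding the single map $\alpha\colon H^{2}(W,X)\to H^{2}(W)$, which is the natural map $j^{*}$ in the sequence.

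The crux is to identify $\alpha$ with the intersection form of $W$. Under the duality isomorphisms above, $H^{2}(W,X)\cong H_{2}(\mathfrak{D})$ has basis $\{[\Sigma'_{i}]\}$ and $H^{2}(W)\cong H_{2}(W,\partial W)$, and by naturality of Lefschetz duality the map $\alpha$ corresponds to the inclusion-induced composite $H_{2}(\mathfrak{D})\to H_{2}(W)\to H_{2}(W,\partial W)\cong\operatorname{Hom}(H_{2}(W),\Z)$, i.e.\ to the adjoint of the intersection pairing evaluated on the classes $[\Sigma'_{i}]$. Because the $\Sigma'_{i}$ are pairwise disjoint with self-intersection $[\Sigma'_{i}]\cdot[\Sigma'_{i}]=-p_{i}$, this pairing is the diagonal matrix $\mathrm{diag}(-p_{1},\dots,-p_{n})$. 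Carefully tracking the Lefschetz-duality identifications through the connecting homomorphism to confirm this description of $\alpha$ is the main obstacle; the rest is bookkeeping. Granting it, $\alpha$ is injective and $\operatorname{coker}\alpha\cong\Z_{p_{1}}\oplus\cdots\oplus\Z_{p_{n}}$.

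Finally, the short exact sequence
$$0\longrightarrow\Z_{p_{1}}\oplus\cdots\oplus\Z_{p_{n}}\longrightarrow H^{2}(X)\longrightarrow\Z^{2g_{1}+\cdots+2g_{n}}\longrightarrow 0$$
splits because the quotient is free, giving
$$H^{2}(\bar X)\cong H^{2}(X)\cong\Z^{2g_{1}+\cdots+2g_{n}}\oplus\Z_{p_{1}}\oplus\cdots\oplus\Z_{p_{n}},$$
which matches $H^{2}(\Co)$ as computed in the Corollary above. This completes the plan.
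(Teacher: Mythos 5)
Your proposal is correct, but it follows a genuinely different route from the paper. The paper computes $H^{2}(\bar X)$ via the Mayer--Vietoris sequence for the decomposition $W=\bar X\cup\mathfrak{D}$ with intersection $\#_{i=1}^{n}B_{-p_{i}}$; the two key inputs there are geometric identifications of maps, namely that $j^{\ast}\colon H^{1}(\Sigma'_{i})\to H^{1}(B_{-p_{i}})$ is an isomorphism (a curve in $\Sigma'_{i}$ together with its circle fibers gives the dual class in the circle bundle) and that $H^{2}(W)\cong\oplus_{i}H^{2}(\Sigma'_{i})$, after which $H^{1}(\bar X)=0$ and the answer for $H^{2}(\bar X)$ drop out of the resulting short exact sequence. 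You instead use the long exact sequence of the pair $(W,X)$, converting the relative groups by excision and Poincar\'e--Lefschetz duality to $H_{4-k}(\mathfrak{D})$, and you identify the one nontrivial map $\alpha\colon H^{2}(W,X)\to H^{2}(W)$ with the intersection form $\mathrm{diag}(-p_{1},\dots,-p_{n})$ of $W$ in the basis $\{[\Sigma'_{i}]\}$; this identification via naturality of duality is standard (it is the usual computation for complements of embedded surfaces in $4$-manifolds, in the spirit of Rasmussen's Lemma 3.1), and the remaining steps --- injectivity of $\alpha$, $\operatorname{coker}\alpha\cong\mathbb{Z}_{p_{1}}\oplus\cdots\oplus\mathbb{Z}_{p_{n}}$, and splitting of the extension by freeness of the quotient --- are all valid. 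Your approach buys a more conceptual explanation of where the torsion comes from (it is literally the cokernel of the linking matrix), while the paper's Mayer--Vietoris argument additionally records $H^{1}(\bar X)=0$, $H^{3}(\bar X)\cong\mathbb{Z}$, and $\chi(X)=2g_{1}+\cdots+2g_{n}$ along the way, the last of which is quoted in the remark following the proposition and used later in the degree-shift computation; if you wanted your proof to substitute for the paper's, you should note that your exact sequence also yields $H^{1}(X)=0$ (from $H^{1}(W)=0$ and $H^{1}(W,X)\cong H_{3}(\mathfrak{D})=0$, one gets $H^{1}(X)$ injecting into $H^{2}(W,X)\cong\mathbb{Z}^{n}$ with the composite into $H^{2}(W)$ injective) and hence the same Euler characteristic.
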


\begin{proof}
We use the Mayer-Vietoris sequence to compute the cohomology of $\bar{X}$. Observe that $W$ is the union of $\bar{X}$ and boundary connected sum $\mathfrak{D}$, and the intersection of $\bar{X}$ and $\mathfrak{D}$ is $\#_{i=1}^{n} B_{-p_{i}}$. Then we have the following long exact sequence: 
$$0 \rightarrow H^{1}(W)\rightarrow H^{1}(\bar{X})\oplus H^{1}(\Sigma'_{1})\cdots \oplus H^{1}(\Sigma'_{n})\xrightarrow{i^{\ast}} \oplus_{i=1}^{n }H^{1}(B_{-p_{i}})\rightarrow H^{2}(W)$$
$$\rightarrow \oplus_{i=1}^{n} H^{2}(\Sigma'_{i})\oplus H^{2}(\bar{X})\rightarrow H^{2}(\#_{i=1}^{n} B_{-p_{i}}) \rightarrow H^{3}(W)\rightarrow H^{3}(\bar{X})\rightarrow H^{3}(\#_{i=1}^{n} B_{-p_{2}})\rightarrow 0 .$$

Recall that $W$ is homotopy equivalent to $S^{2}\vee \cdots \vee S^{2}$. Then $H^{1}(W)\cong H^{3}(W)\cong H^{4}(W)\cong 0$. Thus,  we have
$$H^{3}(\bar{X})\cong H^{3}(\#_{i=1}^{n} B_{-p_{i}})\cong \Z,\quad  H^{4}(\bar{X})=0,$$
and 
$$0\rightarrow H^{1}(\bar{X})\oplus \mathbb{Z}^{2g_{1}}\cdots \oplus \mathbb{Z}^{2g_{n}}\xrightarrow{i^{\ast}} \oplus_{i=1}^{n}\mathbb{Z}^{2g_{i}} \rightarrow \mathbb{Z}^{n}\rightarrow H^{2}(\bar{X})\oplus \mathbb{Z}^{n}\rightarrow \oplus_{i=1}^{n}(\mathbb{Z}^{2g_{i}}\oplus \mathbb{Z}_{p_{i}})\rightarrow 0.$$

We claim that $H^{1}(\bar{X})=0$. Let $j^{\ast}: H^{1}(\Sigma'_{i})\rightarrow H^{1}(B_{-p_{i}})$. Observe that $H^{1}(\Sigma'_{i})\cong H_{1}(\Sigma'_{i})$ and $H^{1}(B_{-p_{i}})\cong H_{2}(B_{-p_{i}})$ by the Poincare duality. Each generator in $H_{1}(\Sigma'_{i})$ is represented by a simple closed curve in $\Sigma'_{i}$. The curve along with its circle fiber is a generator in $H_{2}(B_{-p_{i}})$. Thus  $H_{1}(\Sigma'_{i})\cong H_{2}(B_{-p_{i}})$. Therefore, $j^{\ast}$ is an isomorphism and $H^{1}(\bar{X})=0$. We have the following short exact sequence:
\begin{equation}
0\rightarrow \mathbb{Z}^{n} \rightarrow H^{2}(\bar{X})\oplus \mathbb{Z}^{n}\rightarrow \oplus_{i=1}^{p}(\mathbb{Z}^{2g_{i}}\oplus \mathbb{Z}_{p_{i}})\rightarrow 0.
\end{equation}

We claim  that $H^{2}(W)\cong \oplus_{i=1}^{n} H^{2}( \Sigma'_{i})$. Each $\mathbb{Z}$-summand in $H_{2}(W, \partial W)\cong H^{2}(W)$ is represented by the surface $\Sigma_{i}$ and it corresponds to the generator of $H^{2}(\Sigma'_{i})\cong H_{0}(\Sigma'_{i})$. Thus $H^{2}(\bar{X})\cong H^{2}(\#_{i=1}^{n} B_{-p_{i}})\cong \oplus_{i=1}^{p}(\mathbb{Z}^{2g_{i}}\oplus \mathbb{Z}_{p_{i}})$.

\end{proof}

\begin{remark}
From the computation in the proof, $\chi(X)=2g_{1}+\cdots 2g_{n}$. 

\end{remark}

\begin{proposition}
The intersection form $\mathcal{Q}$: $H^{2}(\bar{X})/ Tor \times H^{2}(\bar{X})/ Tor \rightarrow \mathbb{Q}$ vanishes.

\end{proposition}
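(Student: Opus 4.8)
The plan is to show that the \emph{nondegenerate part} of $\mathcal{Q}$ has rank zero, which forces $\mathcal{Q}$ to vanish identically on $H^{2}(\bar{X})/Tor$. For a compact oriented $4$-manifold with boundary, the intersection form is controlled by the image of the natural map $j^{\ast}\colon H^{2}(\bar{X},\partial\bar{X};\mathbb{Q})\to H^{2}(\bar{X};\mathbb{Q})$: under Poincar\'e--Lefschetz duality $H^{2}(\bar{X})\cong H_{2}(\bar{X},\partial\bar{X})$, and the form corresponds to the intersection pairing on $H_{2}(\bar{X},\partial\bar{X};\mathbb{Q})$, which is nondegenerate precisely on $\mathrm{Im}\big(H_{2}(\bar{X};\mathbb{Q})\to H_{2}(\bar{X},\partial\bar{X};\mathbb{Q})\big)$ and vanishes on a complement of it. This image is dual to $\mathrm{Im}(j^{\ast})$, so it suffices to prove $\mathrm{Im}(j^{\ast})=0$ rationally, equivalently that the restriction map $r\colon H^{2}(\bar{X};\mathbb{Q})\to H^{2}(\partial\bar{X};\mathbb{Q})$ is injective.

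First I would extract this injectivity from the computation already carried out for Proposition \ref{cohomology}. Tensoring its Mayer--Vietoris sequence with $\mathbb{Q}$, the torsion summands $\mathbb{Z}_{p_{i}}$ disappear, and the contribution of $\mathfrak{D}$ (the boundary connected sum of the disk bundles) to the boundary cohomology becomes rationally trivial: the restriction $H^{2}(\mathfrak{D};\mathbb{Q})\to H^{2}(\#_{i=1}^{n}B_{-p_{i}};\mathbb{Q})$ is the restriction of the disk-bundle Euler classes and lands in the torsion subgroup $\bigoplus_{i}\mathbb{Z}_{p_{i}}$, hence is zero over $\mathbb{Q}$. Consequently the only way the free part $\bigoplus_{i}H^{1}(\Sigma_{g_{i}})\cong\mathbb{Q}^{2g_{1}+\cdots+2g_{n}}$ of $H^{2}(\partial\bar{X};\mathbb{Q})$ can be reached is through $r$, so $r$ must map onto it; a dimension count in the resulting short exact sequence then gives $\ker r=0$. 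By the long exact sequence of the pair $(\bar{X},\partial\bar{X})$ this yields $\mathrm{Im}(j^{\ast})=\ker r=0$.

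With $\mathrm{Im}(j^{\ast})=0$ established, the intersection form on $H^{2}(\bar{X})/Tor$ has trivial nondegenerate part and therefore vanishes identically, which is the assertion. Geometrically this is transparent: the $2g_{1}+\cdots+2g_{n}$ free generators are represented by the tori $\pi^{-1}(\gamma)$ over curves $\gamma$ on the surfaces $\Sigma'_{i}$, and these tori all lie in the boundary $\#_{i=1}^{n}B_{-p_{i}}$; any two of them can be pushed into a collar of the boundary and made disjoint, so all mutual and self-intersection numbers vanish. In particular $b_{2}^{+}(\bar{X})=0$, which is exactly the hypothesis needed to apply Proposition \ref{standard} to the cobordism $\bar{X}$.

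I expect the main obstacle to be the second step: verifying that the abstract isomorphism $H^{2}(\bar{X})\cong H^{2}(\#_{i=1}^{n}B_{-p_{i}})$ of Proposition \ref{cohomology} is genuinely realized by the restriction map $r$ over $\mathbb{Q}$, rather than by some other isomorphism. This is precisely what rules out a free class of $H^{2}(\bar{X})$ that dies on the boundary and could a priori support a nonzero pairing, and it has to be read off carefully from the Mayer--Vietoris maps, keeping track of which summands originate from $\bar{X}$ and which from $\mathfrak{D}$.
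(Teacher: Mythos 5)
Your proposal is correct and is essentially the paper's argument in Poincar\'e-dual form: the paper shows that $i_{\ast}\colon H_{2}(\partial\bar{X})\to H_{2}(\bar{X})$ is surjective so that $p_{\ast}\colon H_{2}(\bar{X})\to H_{2}(\bar{X},\partial\bar{X})$ vanishes and hence $\mathcal{Q}=0$, while you prove the equivalent dual statement that the restriction $H^{2}(\bar{X};\mathbb{Q})\to H^{2}(\partial\bar{X};\mathbb{Q})$ is injective so that $\mathrm{Im}(j^{\ast})=0$. Your extra verification that the isomorphism of Proposition \ref{cohomology} is genuinely induced by restriction (via the Mayer--Vietoris maps and the rational vanishing of the disk-bundle contribution) is a sound refinement of a point the paper uses implicitly, but it does not change the route.
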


\begin{proof}
For two elements $s, t\in H^{2}(\bar{X})/Tor \cong H_{2}(\bar{X})$, $\mathcal{Q}(s, t)=\langle \bar{s}, \textup{PD}(t) \rangle$ where $\bar{s}$ is the image of $s$ under the map $p_{\ast}: H_{2}(\bar{X})\rightarrow H_{2}(\bar{X}, \partial \bar{X})$ induced by the projection and $\textup{PD}(t)\in H^{2}(\bar{X}, \partial \bar{X})$. By Proposition \ref{cohomology}, the map $i_{\ast}: H_{2}(\partial \bar{X})\rightarrow H_{2}(\bar{X})$ induced by the inclusion is surjective. Then $p_{\ast}=0$ by the long exact sequence of homology. Hence, $\bar{s}=0$ and $\mathcal{Q}(s, t)=0$. Therefore the intersection form $\mathcal{Q}$ vanishes in $\bar{X}$.

\end{proof}

\begin{corollary}
The signature $\sigma(\bar{X})=0$.

\end{corollary}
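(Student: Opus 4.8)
The plan is to unwind the definition of the signature of a compact oriented $4$-manifold and then simply feed in the preceding proposition. Recall that $\sigma(\bar{X})$ is by definition the signature of the symmetric bilinear intersection form $\mathcal{Q}$ on the free part $H^{2}(\bar{X})/Tor$ (equivalently on $H_{2}(\bar{X})/Tor$); that is, $\sigma(\bar{X})=b_{2}^{+}(\bar{X})-b_{2}^{-}(\bar{X})$, the number of positive eigenvalues minus the number of negative eigenvalues when $\mathcal{Q}$ is diagonalized over $\mathbb{Q}$ (or $\mathbb{R}$).

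First I would invoke the preceding proposition, which establishes that $\mathcal{Q}$ vanishes identically on $H^{2}(\bar{X})/Tor$. Consequently the Gram matrix of $\mathcal{Q}$ in any basis of $H^{2}(\bar{X})/Tor$ is the zero matrix, so $\mathcal{Q}$ has no nonzero eigenvalues. Hence $b_{2}^{+}(\bar{X})=b_{2}^{-}(\bar{X})=0$, and therefore $\sigma(\bar{X})=b_{2}^{+}(\bar{X})-b_{2}^{-}(\bar{X})=0$.

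There is essentially no obstacle here: all of the geometric and homological content has already been carried by the vanishing of the intersection form proved in the previous proposition, and this corollary merely records the immediate numerical consequence for the signature. The only point worth keeping in mind is that the signature is computed on the free quotient $H^{2}(\bar{X})/Tor$ and therefore ignores the torsion subgroup $\mathbb{Z}_{p_{1}}\oplus\cdots\oplus\mathbb{Z}_{p_{n}}$ of $H^{2}(\bar{X})$ found in Proposition \ref{cohomology}; this is precisely why the vanishing of $\mathcal{Q}$ on the free part suffices to conclude $\sigma(\bar{X})=0$.
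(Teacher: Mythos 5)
Your proof is correct and matches the paper's reasoning: the corollary is stated as an immediate consequence of the preceding proposition that the intersection form $\mathcal{Q}$ vanishes on $H^{2}(\bar{X})/Tor$, which is exactly what you use. Your added remark that the signature is computed on the free quotient and so ignores the torsion summands is a fine clarification but not a departure from the paper's argument.
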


By Proposition \ref{cohomology}, $H^{2}(\bar{X})\cong H^{2}(\#_{i=1}^{n} B_{-p_{i}})\cong \mathbb{Z}^{2g_{1}\cdots +2g_{n}}\oplus \mathbb{Z}_{p_{1}}\cdots \oplus \mathbb{Z}_{p_{n}}$, and the restriction map to $H^{2}(S^{3}_{-p_{1}, \cdots, -p_{n}}(\L))$ is the projection onto the torsion factors. For $\bm{s}=(s_{1}, \cdots, s_{n})\in \Z^{n}/  \Z^{n}\Lambda$, it corresponds to a  Spin$^{c}$-structure $\mathfrak{s}$ on $S^{3}_{-p_{1}, \cdots, -p_{n}}(\L)$ which can be extended to $W$ by Proposition \ref{spin extension}. We denote its restrictions to $\bar{X}$ and $\#_{i=1}^{n} B_{-p_{i}}$ both by $\mathfrak{s}'$. Moreover, we let $s'_{i}$ denote the  restriction of the Spin$^{c}$-structure on $\bar{X}$ to $B_{-p_{i}}$. By an  argument similar to the one  in \cite[Lemma 3.1]{Ras}, we have  $c_{1}(s'_{i})=2s_{i}$. 

Rasmussen proved that each circle bundle $B_{-p_{i}}$ is standard since 
$$HF^{\infty}(B_{-p_{i}}, s'_{i})\cong HF^{\infty}(\#^{2g} S^{1}\times S^{2}, \mathfrak{s}_{0})$$
where $s'_{i}$ is a torsion Spin$^{c}$-structure on $B_{-p_{i}}$ and $\mathfrak{s}_{0}$ is the unique torsion Spin$^{c}$-structure on $\#^{2g}(S^{1}\times S^{2})$ \cite{Ras}. Thus $HF^{\infty}(\#_{i=1}^{n} B_{-p_{i}}, \mathfrak{s}')$ is also standard, and  by the additivity property of the $d$-invariants \cite[Proposition 4.3]{Lev}, 
$$d(\#_{i=1}^{n} B_{-p_{i}}, \mathfrak{s}')=d(B_{-p_{1}}, s'_{1})+\cdots+d(B_{-p_{n}}, s'_{n}). $$

 Next,  we can use Proposition \ref{standard}  to prove the following $d$-invariant inequality:

\begin{proposition}
\label{cobordism inequality}
$$d(S^{3}_{-p_{1}, \cdots, -p_{n}}(\L), \mathfrak{s})\leq \sum\limits_{i=1}^{n} d(B_{-p_{i}}, s'_{i})+g_{1}+\cdots +g_{n}.$$

\end{proposition}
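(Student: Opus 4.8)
The plan is to apply Proposition \ref{standard} to the reversed cobordism $\bar X$, regarded as a cobordism from $Y_1 = \Sp$ to $Y_2 = \Co$ carrying the (restriction of the) Spin$^c$-structure $\mathfrak{s}'$, and then to evaluate the grading shift explicitly from the homological data already assembled. The whole statement then reduces to a single computation of $\deg F^+_{\bar X,\mathfrak{s}'}$.

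First I would check that $\bar X$ is a connected cobordism and verify the three numerical hypotheses of Proposition \ref{standard}. Since $\Sp$ is a rational homology sphere, $b_1(Y_1) = 0$; the proof of Proposition \ref{cohomology} gives $H^1(\bar X) = 0$, so $b_1(\bar X) = 0$; and the vanishing of the intersection form $\mathcal{Q}$ on $H^2(\bar X)/\mathrm{Tor}$ gives $b_2^+(\bar X) = 0$. The restriction of $\mathfrak{s}'$ to $\Sp$ is automatically torsion, its restriction to $\Co$ is torsion because each $s'_i$ is a torsion Spin$^c$-structure on $B_{-p_i}$, and $\Co$ is standard. Hence Proposition \ref{standard} applies and yields
$$d(\Sp, \mathfrak{s}) + \deg F^+_{\bar X, \mathfrak{s}'} \le d(\Co, \mathfrak{s}').$$

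Next I would compute the grading shift using the formula from Proposition \ref{standard},
$$\deg F^+_{\bar X, \mathfrak{s}'} = \tfrac14\bigl(c_1^2(\mathfrak{s}') - 3\sigma(\bar X) - 2\chi(\bar X)\bigr).$$
Here $\sigma(\bar X) = 0$ by the corollary following the vanishing of $\mathcal{Q}$, and $\chi(\bar X) = \chi(X) = 2g_1 + \cdots + 2g_n$ by the remark after Proposition \ref{cohomology}. Since $\mathcal{Q}$ vanishes identically on $H^2(\bar X)/\mathrm{Tor}$, the self-pairing $c_1^2(\mathfrak{s}')$ is zero, so $\deg F^+_{\bar X, \mathfrak{s}'} = -(g_1 + \cdots + g_n)$. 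Combining the displayed inequality with the additivity $d(\Co, \mathfrak{s}') = \sum_{i=1}^n d(B_{-p_i}, s'_i)$ established above then gives
$$d(\Sp, \mathfrak{s}) \le \sum_{i=1}^n d(B_{-p_i}, s'_i) + g_1 + \cdots + g_n,$$
which is the claim.

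I expect the only delicate point to be the vanishing of $c_1^2(\mathfrak{s}')$ in this relative setting, where the outgoing end $\Co$ has positive first Betti number, so that the square must be interpreted via a relative lift of $c_1$. The clean justification is that the proof that $\mathcal{Q}$ vanishes in fact shows $p_*: H_2(\bar X) \to H_2(\bar X, \partial \bar X)$ is the zero map, so any class pushed into the relative group — in particular a relative lift of $c_1(\mathfrak{s}')$ — is trivial, forcing $c_1^2(\mathfrak{s}') = 0$. One should only confirm that the grading-shift normalization in Proposition \ref{standard} is the one to use here, but this is precisely the content of Rasmussen's generalization that we are invoking.
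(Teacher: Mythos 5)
Your proposal is correct and takes essentially the same approach as the paper: both apply Proposition \ref{standard} to the cobordism $\bar{X}$ with Spin$^{c}$-structure $\mathfrak{s}'$, evaluate the grading shift as $\frac{1}{4}\left(c_{1}^{2}(\mathfrak{s}')-3\sigma(\bar{X})-2\chi(\bar{X})\right)=-g_{1}-\cdots-g_{n}$ using $\sigma(\bar{X})=0$, $\chi(\bar{X})=2g_{1}+\cdots+2g_{n}$ and $c_{1}^{2}(\mathfrak{s}')=\mathcal{Q}(c_{1}(\mathfrak{s}'),c_{1}(\mathfrak{s}'))=0$, and conclude by the additivity $d(\#_{i=1}^{n}B_{-p_{i}},\mathfrak{s}')=\sum_{i=1}^{n}d(B_{-p_{i}},s'_{i})$. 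Your closing justification of $c_{1}^{2}(\mathfrak{s}')=0$ via the vanishing of $p_{\ast}:H_{2}(\bar{X})\rightarrow H_{2}(\bar{X},\partial\bar{X})$ is exactly the mechanism behind the paper's statement that the intersection form $\mathcal{Q}$ vanishes.
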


\begin{proof}
The argument is similar to the one in \cite[Lemma 3.3]{Ras}. Consider the map $F^{+}_{\bar{X}, \mathfrak{s}'}: HF^{+}(S^{3}_{-p_{1}, \cdots, -p_{n}}(\L), \mathfrak{s})\rightarrow HF^{+}(\#_{i=1}^{n} B_{-p_{i}}, \mathfrak{s}')$. This map is $U$-equivariant and agrees with $F^{\infty}_{\bar{X}, s'}$ in high degrees, which implies that it takes $\pi(HF^{\infty}(S^{3}_{-p_{1}, \cdots, -p_{n}}(\L), \mathfrak{s}))$ onto $\pi(\mathfrak{A}_{\mathfrak{s}'})$ by Proposition \ref{standard}. Thus if $1\in \pi(HF^{\infty}(S^{3}_{-p_{1}, \cdots, -p_{n}}(\L), \mathfrak{s})\otimes \mathbb{F}\cong \mathbb{F}[U^{-1}]$ is the element with the lowest absolute grading, we must have 
$$gr(F^{+}_{\bar{X}, \mathfrak{s}'}(1))\leq d(\#_{i=1}^{n} B_{-p_{i}}, \mathfrak{s}').$$

Observe that $gr(1)=d(S^{3}_{-p_{1}, \cdots, -p_{n}}(\L), \mathfrak{s})$ and $F^{+}_{\bar{X}, \mathfrak{s}'}$ shifts the absolute grading by 
$$deg F_{\bar{X}, \mathfrak{s}'}^{+}=\dfrac{ c_{1}^{2}(\mathfrak{s}')-2\chi(\bar{X})-3\sigma(\bar{X})}{4}=\dfrac{0-2(2g_{1}+\cdots+ 2g_{n})-3\cdot 0}{4}=-g_{1}-\cdots -g_{n} $$
where $c_{1}^{2}(\mathfrak{s}')=\mathcal{Q}(c_{1}(\mathfrak{s}'), c_{1}(\mathfrak{s}'))=0$. Thus, 
$$d(S^{3}_{-p_{1}, \cdots, -p_{n}}(\L), \mathfrak{s})\leq \sum\limits_{i=1}^{n} d(B_{-p_{i}}, s'_{i})+g_{1}+\cdots +g_{n}.$$

\end{proof}

Let $\L^{\ast}$ denote the mirror of $\L$. Observe that $S^{3}_{-p_{1}, \cdots, -p_{n}}(\L)$ is obtained from $S^{3}_{p_{1}, \cdots, p_{n}}(\L^{\ast})$ by reversing the orientation. For any $\bm{s}\in \Z^{n}$, choose sufficiently large $p_{i}\gg 0$ so that $\bm{s}\in P_{\H}(\Lambda)$. Let $\mathfrak{s}$ denote the Spin$^{c}$-structure on $S^{3}_{-p_{1}, \cdots, -p_{n}}(\L)$ corresponding to $\bm{s}$. 
By Theorem \ref{d-invariant formula}
$$d(S^{3}_{-p_{1}, \cdots, -p_{n}}(\L), \mathfrak{s})=-d(S^{3}_{p_{1}, \cdots, p_{n}}(\L^{\ast}), \mathfrak{s})=\deg F_{\mathcal{U}', \mathfrak{s}}+2H_{L^{\ast}}(\bm{s}),$$
where $H_{\L^{\ast}}$ is the $H$-function of $\L^{\ast}$.  Let $O$ denote the unlink with $n$ components. Similarly, we have
$$d(S^{3}_{-p_{1}, \cdots, -p_{n}}(O), \mathfrak{s})=-d(S^{3}_{p_{1}, \cdots, p_{n}}(O), \mathfrak{s})=\deg F_{\mathcal{U}', \mathfrak{s}}+2H_{O}(\bm{s}). $$
Thus, $$d(S^{3}_{-p_{1}, \cdots, -p_{n}}(\L), \mathfrak{s})-d(S^{3}_{-p_{1}, \cdots, -p_{n}}(O), \mathfrak{s})=2H_{L^{\ast}}(\bm{s})-2H_{O}(\bm{s})=2h_{\L^{\ast}}(\bm{s}). 
$$

Recall that for a circle bundle $B_{-m}$ with Euler number $-m$ over a closed, oriented genus $g$ surface, $H^{2}(B_{-m})\cong \Z^{2g}\oplus \Z_{m}$. There is a natural way to label the torsion Spin$^{c}$-structures on $B_{-m}$. Let $\mathfrak{t}_{k}$ denote the  torsion Spin$^{c}$-structure on $B_{-m}$ such that $c_{1}(\mathfrak{t}_{k})=2k\in \Z_{m}$ where $-m/2\leq k\leq m/2$.

\begin{proposition} \cite[Proposition 3.4]{Ras}
\label{circle bundle invariant}
Let $B_{-m}$ denote a circle bundle equipped with a torsion Spin$^{c}$-structure $\mathfrak{t}_{k}$ over a closed oriented surface $\Sigma_{g}$.  For $m\gg 0$, 
$$
d(B_{-m}, \mathfrak{t}_{k}) = \left\{
        \begin{array}{ll}
            E(m, k)-g+2\left\lceil \dfrac{g-|k|}{2}\right\rceil & \quad |k|\leq g \\
            E(m, k)-g & \quad |k| > g
        \end{array}
    \right.
$$
where  $\lbrace E(m, k)\mid k\in \mathbb{Z}_{m} \rbrace$ is the set of $d$-invariants of the lens space $L(m, 1)$. 

\end{proposition}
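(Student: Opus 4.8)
The plan is to realize the circle bundle as a large surgery on a knot whose entire knot Floer complex is understood, and then read the $d$-invariant off the large surgery formula. Concretely, $B_{-m}$ is the result of $(-m)$-surgery on the genus-$g$ Borromean knot $B_g \subset \#^{2g}(S^1\times S^2)$, the knot whose knot Floer homology is the exterior algebra $\Lambda^{\ast}H^1(\Sigma_g)$. For $m\gg 0$ we are in the large-surgery range, and the torsion Spin$^{c}$-structure $\mathfrak{t}_k$ matches the Alexander grading $k$. Since reversing the orientation of a circle bundle negates its Euler number, the negative and positive bundles are interchanged by orientation reversal, which swaps the top and bottom of the distinguished tower; I will keep track of this so that the conjugation symmetry $d(B_{-m},\mathfrak{t}_k)=d(B_{-m},\mathfrak{t}_{-k})$ is visible.

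First I would record the input complex: by Ozsv\'ath--Szab\'o's computation, $CFK^{\infty}(B_g)\cong \Lambda^{\ast}H_1(\Sigma_g)\otimes \mathbb{F}[U,U^{-1}]$ \emph{with vanishing differential}, the summand $\Lambda^{g+a}$ sitting in Alexander grading $a$ and Maslov grading $a$ for $-g\le a\le g$ (this is forced because the total rank already equals $\operatorname{rk}HF^{\infty}(\#^{2g}(S^1\times S^2))=2^{2g}$). Applying the large surgery formula gives $HF^{+}(B_{-m},\mathfrak{t}_k)\cong H_{\ast}(A_k)$ up to an overall grading shift that depends only on $(m,k)$ and not on $g$; I would pin this shift down using the genus-$0$ case, where $B_{-m}$ is the lens space $L(m,1)$ and the formula must reproduce $E(m,k)$. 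Because the differential vanishes, $A_k$ splits as a direct sum of towers indexed by $a$, the $a$-th tower having its bottom in grading $\min(a,\,2k-a)$.

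The $d$-invariant is then the grading of the bottom of $\pi(\mathfrak{A}_{\mathfrak{t}_k})$, where $\mathfrak{A}_{\mathfrak{t}_k}=\ker(H_1\text{-action})\subset HF^{\infty}$. Here is the crux: the $H_1(B_{-m})$-action on the large-surgery complex is not the naive contraction on $\Lambda^{\ast}$ but a $U$-twisted version, and its kernel is a $k$-dependent class supported on the exterior powers of the correct parity, i.e.\ on those $a$ with $a\equiv g \pmod 2$. Granting this, the bottom of $\pi(\mathfrak{A}_{\mathfrak{t}_k})$ equals $\max\{\min(a,2k-a): |a|\le g,\ a\equiv g \ (\mathrm{mod}\ 2)\}$; evaluating this maximum yields exactly $-g+2\lceil (g-|k|)/2\rceil$ when $|k|\le g$ and $-g$ when $|k|>g$, with the ceiling and its parity dependence coming precisely from the congruence constraint on $a$. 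Combined with the calibrated shift this gives the stated formula. I expect the main obstacle to be exactly this step: controlling the $H_1$-action on $A_k$ and proving that its kernel lands on the even exterior powers, since that is what produces the parity-dependent ceiling and enforces the conjugation symmetry. Everything else---the tower-bottom formula, the determination of the shift from the genus-$0$ lens space, and the bookkeeping of the orientation reversal---is routine once this is in place.
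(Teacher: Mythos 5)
Your route is the same as the paper's (which, after recording that $B_{-m}$ is $-m$-surgery on the Borromean knot and that $\widehat{HFK}(B,i)\cong\wedge^{g+i}H^{1}(\Sigma_{g})$, simply defers to Rasmussen): flat complex $\Lambda^{\ast}H^{1}(\Sigma_{g})\otimes\mathbb{F}[U,U^{-1}]$, large surgery formula, bottom of $\pi(\mathfrak{A}_{\mathfrak{t}_{k}})$. The step you ``grant'' is in fact correct and is the crux: the action is $\gamma\mapsto\iota_{\gamma}+U\,(\mathrm{PD}\,\gamma)\wedge$, and its kernel is spanned over $\mathbb{F}[U,U^{-1}]$ by a single class of the shape $\prod_{i=1}^{g}(1+U\,a_{i}\wedge b_{i}\wedge\,\cdot\,)$, supported in every $\Lambda^{g+a}$ with $a\equiv g\pmod 2$, $|a|\leq g$, with $U$-offsets aligning all components in one Maslov grading. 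So the architecture of your proof matches the paper's.

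But your evaluation step fails, in two compounding ways, and it does not prove the proposition as stated. First, $\min(a,2k-a)$ is the tower bottom for the \emph{positive} large surgery quotient $C\{\max(i,j-k)\geq 0\}$; for $-m$-surgery the relevant subquotient is $C\{\min(i,j-k)\geq 0\}$, whose $a$-th tower bottoms out at $\max(a,2k-a)$ --- the orientation-reversal bookkeeping you promised never actually enters your formula. Second, since $\pi(\mathfrak{A}_{\mathfrak{t}_{k}})$ is the image of a \emph{single sum} class, it is nonzero at a given grading as soon as \emph{one} component survives, so its bottom is the \emph{minimum} of the component bottoms, not the maximum. Concretely, your expression $\max\{\min(a,2k-a)\}$, calibrated by the $g=0$ case, returns the negative of the correct relative invariant: for $g=1$, $k=0$ it gives $E(m,0)-1$, whereas the proposition asserts $E(m,0)+1$ (since $-g+2\lceil(g-|k|)/2\rceil=1$); for $|k|>g$ it gives $E(m,k)+g$ instead of $E(m,k)-g$. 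The correct combinatorics in your framework is $\min\{\max(a,2k-a):|a|\leq g,\ a\equiv g\ (\mathrm{mod}\ 2)\}-\max(0,2k)$, which does reproduce $-g+2\lceil(g-|k|)/2\rceil$ for $|k|\leq g$ and $-g$ otherwise; with that replacement (and the honest negative-surgery subquotient) your outline becomes Rasmussen's proof.
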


\begin{proof}
The circle bundle $B_{-m}$ can be obtained by doing $-m$-surgery on  ``the Borromean knot" $B\subset \#^{2g}(S^{1}\times S^{2})$. For the large surgery, $HF(B_{-m}, \mathfrak{t}_{k})$ can be obtained from the knot Floer homology $HFK^{\infty}(B)$. Ozsv\'ath and Szab\'o proved that \cite{SZ5}
$$\widehat{HFK}(B, i)\cong \wedge^{g+i}(H^{1}(\Sigma_{g})) \quad \textup { if } |i|\leq g.$$
Otherwise, $\widehat{HFK}(B, i)=0$. The rest of the  proof can be found in \cite[Proposition 3.4]{Ras}.

\end{proof}

\subsection{Proofs of main theorems}
\label{proof of unlink }
We prove Theorem \ref{d-invariant inequality} and Theorem \ref{unlink} in this subsection.

\medskip
\noindent
{\bf Proof of Theorem \ref{d-invariant inequality}: }
By Proposition \ref{cobordism inequality} and Proposition \ref{circle bundle invariant}, 
$$d(\Sp, \mathfrak{s})\leq \sum\limits_{i=1}^{n} (E(p_{i}, s_{i})-g_{i}+2f_{g_{i}}(s_{i}))+g_{1}+\cdots +g_{n}.$$
Recall that $d(\Sp, \mathfrak{s})=2h_{\L^{\ast}}(\bm{s})+d(\Sph, \mathfrak{s})$. For lens spaces, our orientation convention is the one used in \cite{Ras}, namely, that $-p$ surgery on the unknot produces the oriented space $L(p, 1)$. Then $\Sph=L(p_{1}, 1)\# \cdots  \# L(p_{n}, 1)$, and 
$$d(\Sph, \mathfrak{s})=\sum\limits_{i=1}^{n} d(L(p_{i}, 1), s_{i})=\sum\limits_{i=1}^{n} E(p_{i}, s_{i}). $$
Hence, 
$$h_{\L^{\ast}}\leq \sum\limits_{i=1}^{n} f_{g_{i}}(s_{i}).$$
The surfaces $\Sigma_{i}$ bounded by $L_{i}$ are pairwise disjoint. Then the  corresponding link components of the mirror link $\L^{\ast}$  bound the  mirrors of  $\Sigma_{i}$ which have the same genera as $\Sigma_{i}$. Thus we have 
$h_{L}(\bm{s})\leq \sum_{i=1}^{n} f_{g_{i}}(s_{i}).$
\qed

\begin{corollary}
\label{contained as a subset}
For an oriented $n$-component link $\L\subset S^{3}$, $\G(\L)\subset\G_{HF}(\L)$.
\end{corollary}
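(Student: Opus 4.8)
The plan is to derive Corollary \ref{contained as a subset} directly from Theorem \ref{d-invariant inequality}, since the corollary is precisely the set-theoretic repackaging of the inequality. Recall the definitions: $\G(\L)$ consists of all tuples $\g=(g_1,\dots,g_n)$ that are realized as genera of some collection of pairwise disjoint embedded surfaces $\Sigma_i\hookrightarrow B^4$ with $\partial\Sigma_i=L_i$, while $\G_{HF}(\L)$ consists of all $\bm{v}\succeq\bm{0}$ with $h(\bm{v})=0$. So I would start by fixing an arbitrary $\g=(g_1,\dots,g_n)\in\G(\L)$ and showing $\g\in\G_{HF}(\L)$, which means verifying the two conditions $\g\succeq\bm{0}$ and $h(\g)=0$.

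The first condition is immediate, since genera are nonnegative integers, so $\g\succeq\bm{0}$ automatically. For the second condition, I would apply Theorem \ref{d-invariant inequality} with the specific choice $\bm{v}=\g$, that is, $v_i=g_i$ for each $i$. This gives
$$h(\g)\leq\sum_{i=1}^{n}f_{g_i}(g_i).$$
Now I evaluate $f_{g_i}(g_i)$: in the defining formula for $f_{g_i}$, the case $|v_i|\leq g_i$ applies with $v_i=g_i$, giving $f_{g_i}(g_i)=\left\lceil(g_i-|g_i|)/2\right\rceil=\lceil 0\rceil=0$. Hence $h(\g)\leq 0$. Combined with the nonnegativity of the $h$-function (which follows from Lemma \ref{h-function increase}, stating $H_\L$ takes nonnegative values, together with $h_\L=H_\L$ on the region $\bm{s}\succeq\bm{0}$), I conclude $h(\g)=0$. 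Therefore $\g\in\G_{HF}(\L)$.

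Since $\g\in\G(\L)$ was arbitrary, this establishes $\G(\L)\subseteq\G_{HF}(\L)$. There is no substantial obstacle here: the entire content is already carried by Theorem \ref{d-invariant inequality}, and the corollary is the observation that the diagonal evaluation $\bm{v}=\g$ of that inequality forces $h(\g)=0$. The only points requiring care are the arithmetic evaluation $f_{g_i}(g_i)=0$ and the invocation of nonnegativity of $h$ on the nonnegative orthant to upgrade $h(\g)\leq 0$ to $h(\g)=0$; both are routine given the results already available in the excerpt.
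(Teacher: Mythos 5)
Your proof is correct and follows essentially the same route as the paper, which also deduces the corollary by evaluating the inequality of Theorem \ref{d-invariant inequality} at $\bm{v}\succeq\g$ (so each $f_{g_i}(v_i)=0$) and using nonnegativity of $h$ on the nonnegative orthant. You merely spell out the arithmetic $f_{g_i}(g_i)=\lceil 0\rceil=0$ and the upgrade from $h(\g)\leq 0$ to $h(\g)=0$, which the paper leaves implicit.
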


\begin{proof}
Suppose that the link components of $\L$ bound pairwise disjoint surfaces in $B^{4}$ of genera $g_{i}$. By Theorem \ref{d-invariant inequality}, $h_{\L}(\bm{s})=0$ if $\bm{s}\succeq \bm{g}$ where $\bm{g}=(g_{1}, \cdots, g_{n})$. 

\end{proof}

\begin{definition}
An oriented $n$-component link $\L\subset S^{3}$ is (smoothly) \emph{slice} if there exist $n$ disjoint, smoothly embedded disks in $B^{4}$ with boundary $\L$. 
\end{definition}

\noindent
{\bf Proof of Theorem \ref{unlink}: }
If $\L$ is slice, then $h_{\L}=0$ by Theorem \ref{d-invariant inequality}. Thus $H_{\L}(\bm{v})=H_{O}(\bm{v})=\sum_{i=1}^{n}H(v_{i})$ where $H(v_{i})$ is the $H$-function for the unknot and $\bm{v}=(v_{1}, \cdots, v_{n})\in \Z^{n}$.  We  claim that $HFL^{-}(\L, \bm{v})=0$ if there exists a component $v_{j}>0$. By Proposition \ref{spectral sequence 1}, there exists a spectral sequence converging to $HFL^{-}(\L)$ with the $E_{1}$-page
$$E_{1}(\bm{v})=\bigoplus_{B\subset \lbrace 1, \cdots, n \rbrace} H_{\ast}(A^{-}(\L, \bm{v}-\bm{e}_{B})).$$ 
Let $\mathcal{K}=\lbrace 1, \cdots, n \rbrace \setminus \lbrace j \rbrace$, and 
$$E'(\bm{v})=\bigoplus_{B\subset \mathcal{K}} H_{\ast}(A^{-}(\L, \bm{v}-\bm{e}_{B})),\quad  E''(\bm{v})=\bigoplus_{B\subset \mathcal{K}} H_{\ast}(A^{-}(\L, \bm{v}-\bm{e}_{B}-\bm{e}_{j})).$$
Then $E_{1}(\bm{v})=E'(\bm{v})\oplus E''(\bm{v})$. Recall that for each $B\subset \lbrace 1, \cdots, n \rbrace$, $H_{\ast}(A^{-}(\L, \bm{v}-\bm{e}_{B}))\cong \F[U]$, \cite{Liu}. Let  $\partial_{1}, \partial', \partial ''$ denote  the differentials in $E_{1}(\bm{v}), E'(\bm{v})$ and $E''(\bm{v})$, respectively. Let $z$ denote the generator of $H_{\ast}(A^{-}(\L, \bm{v}-e_{B}-e_{j}))\in E''(\bm{v})$ with homological grading $-2H(\bm{v}-e_{B}-e_{j})$. Observe that $H(\bm{v}-e_{B}-e_{j})=H(\bm{v}-e_{B})$ since $H(v_{j}-1)=H(v_{j})$ for $v_{j}>0$. Then $\partial_{1}(z)=\partial''(z)+z'$ where $z'$ is the generator of $H_{\ast}(A^{-}(\L, \bm{v}-\bm{e}_{B}))$ with homological grading $-2H(\bm{v}-e_{B})$. Let $\mathcal{D}$ be an acyclic chain complex with two generators $a$ and $b$, and the differential $\partial_{D}(a)=b$. Then the chain complex $(E_{1}(\bm{v} ), \partial_{1} )$  is isomorphic to $(E'(\bm{v})\otimes \mathcal{D}, \partial_{1}\otimes \partial_{D})$. Thus $E_{2}=0$, and the spectral sequence collapes at $E_{2}$. Therefore, $HFL^{-}(\L, \bm{v})=0$ if there exists $v_{j}>0$. 

We also have $\widehat{HFL}(\L, \bm{v})=0$ if there exists $v_{j}>0$ by the spectral sequence in  Proposition \ref{spectral sequence 2}. By the symmetric property \cite{SZ5}, $\widehat{HFL}(\L, -\bm{v})=\widehat{HFL}(\L, \bm{v})=0$. Hence, $\widehat{HFL}(\L, \bm{v})=0$ if $\bm{v}\neq \bm{0}$. The dual Thurston polytope of $\L$ is a point at the origin \cite[Theorem 1.1]{SZ6}. Thus, the link $\L$ bounds disjoint disks in $S^{3}$, and $\L$ is an unlink. 
\qed

\subsection{Lower bounds for the 4-genera}
In this subsection, we use $\L\subset S^{3}$ to denote an $n$-component link with vanishing pairwise linking numbers. The inequality in Theorem \ref{d-invariant inequality} produces some lower bounds for the 4-genus of  $\L$. 

\begin{corollary}
For the link $\L$ , 
\begin{equation}
\label{bound 3}
g_{4}(\L)\geq \min\lbrace s_{1}+\cdots+s_{2}\mid h(\bm{x})=0 \textup{ if }\bm{x}\succeq \bm{s}=(s_{1}, \cdots, s_{n}) \rbrace. 
\end{equation}\
\end{corollary}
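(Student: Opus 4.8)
The plan is to deduce the lower bound \eqref{bound 3} directly from the Corollary to Theorem \ref{d-invariant inequality}, which states that $h(\bm{v})=0$ whenever $\bm{v}\succeq \bm{g}$ for a genus vector $\bm{g}=(g_{1},\dots,g_{n})\in\G(\L)$. First I would fix an arbitrary collection of pairwise disjoint smoothly embedded surfaces $\Sigma_{i}\subset B^{4}$ with $\partial\Sigma_{i}=L_{i}$ and $g(\Sigma_{i})=g_{i}$, so that $\bm{g}=(g_{1},\dots,g_{n})\in\G(\L)$. By the Corollary (equivalently, by applying Theorem \ref{d-invariant inequality} with $\bm{v}\succeq\bm{g}$, where the functions $f_{g_{i}}(v_{i})$ vanish since $|v_{i}|=v_{i}>g_{i}$ for each $i$ once $v_{i}>g_{i}$, and more carefully using Lemma \ref{shape 1} to extend from $\bm{v}=\bm{g}$), we obtain that $h(\bm{x})=0$ for all $\bm{x}\succeq\bm{g}$.

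The key observation is that this makes $\bm{g}$ a feasible point for the minimization problem on the right-hand side of \eqref{bound 3}: the constraint ``$h(\bm{x})=0$ if $\bm{x}\succeq\bm{s}$'' is satisfied by taking $\bm{s}=\bm{g}$. Therefore the minimum on the right-hand side is at most $g_{1}+\cdots+g_{n}$, the total genus of our chosen surfaces. Explicitly, denoting by $\mu$ the right-hand side of \eqref{bound 3}, we have
$$\mu=\min\{s_{1}+\cdots+s_{n}\mid h(\bm{x})=0 \textup{ if }\bm{x}\succeq\bm{s}\}\leq g_{1}+\cdots+g_{n}.$$

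Since this inequality holds for \emph{every} choice of pairwise disjoint surfaces, I would then take the infimum over all such configurations. By definition, $g_{4}(\L)=\min_{\bm{g}\in\G(\L)}(g_{1}+\cdots+g_{n})$, so minimizing the right-hand side over all $\bm{g}\in\G(\L)$ while the left-hand side $\mu$ remains a fixed constant yields $\mu\leq g_{4}(\L)$, which is exactly \eqref{bound 3}. I should be slightly careful that the set over which the minimum defining $\mu$ is taken is nonempty and that the minimum is attained; nonemptiness follows because $h(\bm{x})=0$ for $\bm{x}$ sufficiently large (indeed $\bm{g}$ itself witnesses a valid $\bm{s}$), and attainment follows from the monotonicity in Lemma \ref{shape 1} together with the fact that $h$ is nonnegative and integer-valued. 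The only mild subtlety, and the step most worth stating carefully, is the reduction from the per-configuration bound $\mu\leq g_{1}+\cdots+g_{n}$ to the bound against $g_{4}(\L)$: one must observe that $\mu$ does not depend on the chosen surfaces, so it is bounded above by the infimum of the total genera, which is precisely $g_{4}(\L)$.
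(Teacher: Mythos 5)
Your proof is correct and takes essentially the same route as the paper: the paper's one-line proof cites Corollary \ref{contained as a subset} (that $\G(\L)\subseteq\G_{HF}(\L)$, itself obtained from Theorem \ref{d-invariant inequality} exactly as you do), which is precisely your observation that any genus vector $\bm{g}\in\G(\L)$ is feasible for the minimum on the right-hand side, so that minimum is at most $g_{4}(\L)$. Your side remark about attainment of the minimum is unnecessary for the inequality (the infimum bound suffices, even in degenerate cases such as the unlink), but it does no harm.
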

\begin{proof}
This is straightforward from Corollary \ref{contained as a subset}
\end{proof}

\begin{corollary}
For the link $\L$, 
$g_{4}(\L)\geq 2\max_{\bm{s}\in \Z^{n}} h_{\L}(\bm{s})-n.$
In particular, 
\begin{equation}
\label{bound 1}
g_{4}(\L)\geq 2h_{\L}(\bm{0})-n. 
\end{equation}

\end{corollary}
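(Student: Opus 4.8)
The plan is to feed an optimal lattice point into the inequality of Theorem \ref{d-invariant inequality} and then dominate its right-hand side by a linear expression in the $g_i$. Fix any collection of pairwise disjoint, smoothly embedded surfaces $\Sigma_i\subset B^4$ with $\partial\Sigma_i=L_i$ and genera $g_i$, and let $\bm{s}^{*}=(s_1^{*},\ldots,s_n^{*})\in\Z^n$ be a point at which $h_{\L}$ attains its maximum (such a point exists since $h_{\L}$ has compact support). Applying Theorem \ref{d-invariant inequality} with $\bm{v}=\bm{s}^{*}$ gives
$$\max_{\bm{s}\in\Z^n}h_{\L}(\bm{s})=h_{\L}(\bm{s}^{*})\leq\sum_{i=1}^{n}f_{g_i}(s_i^{*}).$$
It therefore suffices to bound the sum $\sum_i f_{g_i}(s_i^{*})$ by $\tfrac12\sum_i g_i+\tfrac{n}{2}$.

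The key step is the pointwise estimate $2f_{g_i}(v_i)-g_i\leq 1$, valid for every index $i$ and every $v_i\in\Z$. When $|v_i|>g_i$ this is immediate, since $f_{g_i}(v_i)=0$ and $g_i\geq 0$. When $|v_i|\leq g_i$, set $m=g_i-|v_i|\geq 0$; the definition gives $2f_{g_i}(v_i)=2\lceil m/2\rceil$, and the elementary rounding inequality $2\lceil m/2\rceil\leq m+1$ yields $2f_{g_i}(v_i)\leq g_i-|v_i|+1\leq g_i+1$, as desired.

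Summing the pointwise estimate over $i$ produces $2\sum_i f_{g_i}(s_i^{*})\leq\sum_i g_i+n$, which combined with the displayed consequence of Theorem \ref{d-invariant inequality} gives $2\max_{\bm{s}}h_{\L}(\bm{s})\leq\sum_i g_i+n$, that is, $\sum_i g_i\geq 2\max_{\bm{s}}h_{\L}(\bm{s})-n$. Since this holds for every admissible family $\g\in\G(\L)$, minimizing over $\G(\L)$ and recalling $g_4(\L)=\min_{\g\in\G(\L)}(g_1+\cdots+g_n)$ yields the asserted bound. The special case \eqref{bound 1} then follows at once from $h_{\L}(\bm{0})\leq\max_{\bm{s}}h_{\L}(\bm{s})$. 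There is no genuine obstacle beyond the rounding inequality $2\lceil m/2\rceil\leq m+1$; the only point requiring care is that the pointwise estimate hold uniformly across both branches of the definition of $f_{g_i}$, in particular on the components where $|s_i^{*}|>g_i$ and $f_{g_i}$ vanishes, so that no negative contribution is mishandled when the terms are summed.
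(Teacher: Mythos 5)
Your argument is correct and follows essentially the same route as the paper: both apply Theorem \ref{d-invariant inequality} pointwise and then dominate the right-hand side by the rounding estimate $2\lceil m/2\rceil\leq m+1$ (the paper phrases this as $f_{g_{i}}(v_{i})\leq\lceil g_{i}/2\rceil\leq (g_{i}+1)/2$, which is your bound specialized to the maximizing value $v_{i}=0$). One minor caveat: $h_{\L}$ does not in general have compact support, since it stabilizes to the $h$-functions of sublinks as some coordinates tend to $+\infty$, but this is harmless because your pointwise bound holds for every $\bm{s}\in\Z^{n}$, so the supremum is controlled without ever needing a maximizer.
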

\begin{proof}
By Theorem \ref{d-invariant inequality}, for all $\bm{s}\in \Z^{n}$,
$h_{\L}(\bm{s})\leq \lceil g_{1}/2 \rceil+\cdots+\lceil g_{n}/2\rceil.$
Observe that $\lceil g_{i}/2 \rceil\leq (g_{i}+1)/2$. Then 
$$g_{1}+\cdots+g_{n}+n\geq 2\max_{\bm{s}\in \Z^{n}} h_{\L}(\bm{s}).$$
Hence $g_{4}(\L)\geq 2\max_{\bm{s}\in \Z^{n}} h_{\L}(\bm{s})-n.$
\end{proof}

\begin{corollary}
Let $g_{4}(L_{i})$ denote the 4-genus of the link component $L_{i}$. Then 
\begin{equation}
\label{bound 2}
g_{4}(\L) \geq 2h_{\L}(\bm{s})-n+|s_{1}|+\cdots+|s_{2}|,
\end{equation}
where $\bm{s}=(s_{1}, \cdots, s_{n})$, and $|s_{i}|\leq g_{4}(L_{i})$. 
\end{corollary}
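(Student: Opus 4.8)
The plan is to read this bound off directly from Theorem~\ref{d-invariant inequality} by evaluating the inequality at $\bm{v}=\bm{s}$ and then minimizing over all surface configurations. First I would fix an arbitrary collection of pairwise disjoint, smoothly embedded surfaces $\Sigma_1\sqcup\cdots\sqcup\Sigma_n\hookrightarrow B^4$ with $\partial\Sigma_i=L_i$ and genera $g_i$, so that $\bm{g}=(g_1,\dots,g_n)\in\G(\L)$. The key preliminary observation concerns how the hypothesis $|s_i|\leq g_4(L_i)$ interacts with the definition of $f_{g_i}$: each $\Sigma_i$ is, on its own, a smoothly embedded surface in $B^4$ bounding the single component $L_i$, so $g_i\geq g_4(L_i)$ by the very definition of the $4$-genus of that component. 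Hence $|s_i|\leq g_4(L_i)\leq g_i$ for every $i$, which places each coordinate $s_i$ in the first branch of $f_{g_i}$.

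Next I would apply Theorem~\ref{d-invariant inequality} at the lattice point $\bm{v}=\bm{s}$. Since $|s_i|\leq g_i$, this yields
$$h_{\L}(\bm{s})\leq\sum_{i=1}^{n}f_{g_i}(s_i)=\sum_{i=1}^{n}\left\lceil\frac{g_i-|s_i|}{2}\right\rceil.$$
Using the elementary estimate $\lceil(g_i-|s_i|)/2\rceil\leq(g_i-|s_i|+1)/2$ and rearranging, I would obtain
$$\sum_{i=1}^{n}g_i\geq 2h_{\L}(\bm{s})-n+\sum_{i=1}^{n}|s_i|.$$
Since this inequality holds for every configuration $\bm{g}\in\G(\L)$, and $g_4(\L)=\min_{\bm{g}\in\G(\L)}(g_1+\cdots+g_n)$, taking the infimum over all such configurations gives the claimed bound.

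There is no genuine obstacle here; the corollary is a direct specialization of Theorem~\ref{d-invariant inequality}. The only point requiring care is that the stated hypothesis $|s_i|\leq g_4(L_i)$ — rather than the a priori unknown $|s_i|\leq g_i$ — still keeps each $s_i$ in the first branch of $f_{g_i}$. This is exactly what the monotonicity $g_4(L_i)\leq g_i$ noted above secures: without it, some $f_{g_i}(s_i)$ could vanish, the term $(g_i-|s_i|+1)/2$ would overestimate $f_{g_i}(s_i)$ in the wrong direction, and the final rearrangement would break down.
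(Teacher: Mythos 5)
Your proposal is correct and follows essentially the same route as the paper: apply Theorem~\ref{d-invariant inequality} at $\bm{v}=\bm{s}$, use $g_{i}\geq g_{4}(L_{i})$ to ensure each $s_{i}$ lands in the first branch of $f_{g_{i}}$, bound the ceilings by $(g_{i}-|s_{i}|+1)/2$, rearrange, and minimize over all configurations of disjoint surfaces. Your explicit remark on why the hypothesis $|s_{i}|\leq g_{4}(L_{i})$ suffices is a point the paper passes over quickly, but it is the same argument.
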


\begin{proof}
Suppose that $\L$ bound pairwise disjoint surfaces $\Sigma_{i}$ in $B^{4}$ of genera $g_{i}$. Then $g_{i}\geq g_{4}(L_{i})$ for all $i$. If $|s_{i}|\leq g_{4}(L_{i})$, then by Theorem \ref{d-invariant inequality},
$$h_{\L}(\bm{s})\leq\sum_{i=1}^{n}\left \lceil \dfrac{g_{i}-|s_{i}|}{2} \right\rceil.$$
Since $\lceil (g_{i}-|s_{i}|)/2\rceil\leq (g_{i}-|s_{i}|+1)/2$, we have $g_{1}+\cdots+g_{n}\geq 2h_{\L}(\bm{s})-n+|s_{1}|+\cdots+|s_{2}|$. Hence, $g_{4}(\L)\geq 2h_{\L}(\bm{s})-n+|s_{1}|+\cdots+|s_{2}|$.
\end{proof}

For the rest of the subsection, we prove that the analogues of Lemma \ref{shape 1}, Lemma \ref{shape 2} and Lemma \ref{shape 3} hold for
the set $\G(\L)$. For an oriented link $\L$ with vanishing pairwise linking numbers, we use the \emph{cancellation process} to find pairwise disjoint surfaces in $B^{4}$ bounded by $\L$. Let $\Sigma_{i}\subset S^{3}$ denote a Seifert surface bounded by $L_{i}$. Then $\Sigma_{i}$ and $\Sigma_{j}$ intersect transversely  at even number of  points in $B^{4}$ since the linking number equals 0. We remove the tubular neighborhoods of  a positive crossing and a negative crossing in $\Sigma_{i}$ and obtain a new surface with two punctures. Add a tube along  an arc in $\Sigma_{j}$  which connects the two intersection points to the punctured surface where the attaching circles are boundaries of these two punctures, as in  Figure \ref{cancel}. Then we obtain a new surface $\Sigma_{i}^{'}$  with fewer intersection points with $\Sigma_{j}$ and higher genus compared with $\Sigma_{i}$. The tube can also be attached to the surface $\Sigma_{j}$ along an arc connecting the intersection  points in $\Sigma_{i}$. We  repeat the process  until we get pairwise disjoint surfaces in $B^{4}$ bounded by $\L$.  We call the process of adding tubes to eliminate intersection points as \emph{cancellation process}.
\begin{figure}[H]
\centering
\includegraphics[width=3.0in]{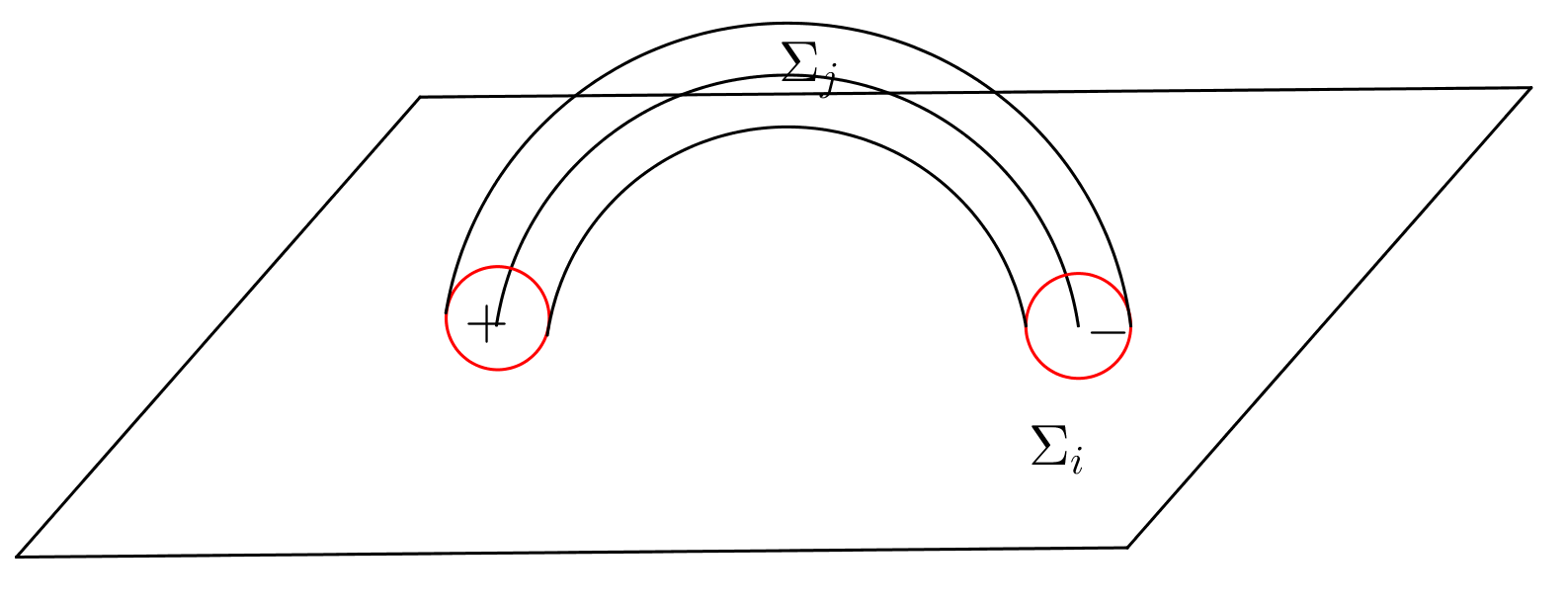}
\caption{Cancellation process \label{cancel}}
\end{figure}

\begin{lemma}
\label{shape 11}
If $\bm{g}\in \G(\L)$ and $\bm{y}\succeq \bm{g}$, then $\bm{y}\in \G(\L)$. Equivalently, if $\bm{g}\notin \G(\L)$ and $\bm{y}\preceq \bm{g}$, then $\bm{y}\notin \G(\L)$.

\end{lemma}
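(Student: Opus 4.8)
The plan is to show that if $\bm{g}=(g_1,\dots,g_n)\in\G(\L)$, then any $\bm{y}\succeq\bm{g}$ also lies in $\G(\L)$; that is, given pairwise disjoint surfaces $\Sigma_i\hookrightarrow B^4$ with $\partial\Sigma_i=L_i$ and $g(\Sigma_i)=g_i$, I can produce pairwise disjoint surfaces $\Sigma_i'$ with $\partial\Sigma_i'=L_i$ and $g(\Sigma_i')=y_i$ for any prescribed $y_i\geq g_i$. The key observation is that genus can always be increased while preserving both the boundary and disjointness: given an embedded surface $\Sigma_i$ in $B^4$, one can attach a trivial handle (a small tube) in a tubular neighborhood of an interior point of $\Sigma_i$. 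Because the neighborhood can be taken to be an embedded ball disjoint from all the other surfaces $\Sigma_j$ (the $\Sigma_j$ are a finite collection of disjoint submanifolds, so $\Sigma_i$ has interior points with neighborhoods avoiding them), attaching such a handle raises $g(\Sigma_i)$ by exactly $1$ without altering $\partial\Sigma_i=L_i$ and without creating any intersection with the other surfaces.

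Concretely, I would proceed as follows. First, fix the disjoint surfaces $\Sigma_1,\dots,\Sigma_n$ realizing $\bm{g}\in\G(\L)$. For each index $i$, choose an interior point $x_i\in\Sigma_i$ and a small ball $B_i\subset B^4$ with $B_i\cap(\Sigma_1\cup\cdots\cup\Sigma_n)=\Sigma_i\cap B_i$ equal to a standard disk; since the surfaces are pairwise disjoint and closed away from the boundary, such balls exist and can be chosen pairwise disjoint. Second, inside $B_i$ perform the standard genus-one stabilization: remove the disk $\Sigma_i\cap B_i$ and glue in an annulus-with-handle (equivalently, tube along a trivial unknotted arc in $B_i$), which increases the genus of $\Sigma_i$ by one while keeping $\partial\Sigma_i$ fixed and keeping everything inside $B_i$. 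Third, iterate this local stabilization $y_i-g_i$ times on each component, using fresh disjoint balls each time, to obtain $\Sigma_i'$ with $g(\Sigma_i')=y_i$. Because all modifications are supported in pairwise disjoint balls, the resulting surfaces remain pairwise disjoint, each still bounds $L_i$, so $\bm{y}=(y_1,\dots,y_n)\in\G(\L)$. The equivalent contrapositive statement — that $\bm{g}\notin\G(\L)$ and $\bm{y}\preceq\bm{g}$ imply $\bm{y}\notin\G(\L)$ — follows immediately by logical contraposition of what was just proved.

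The proof is essentially a one-line topological fact (stabilization raises genus locally), so there is no serious obstacle; the only point requiring a little care is the \emph{disjointness} bookkeeping. I must verify that the tubing can genuinely be carried out in a ball disjoint from the other surfaces, which is where the pairwise-disjointness hypothesis is used. This is analogous to, but strictly simpler than, the \emph{cancellation process} described above, which also modifies surfaces by attaching tubes; here the tubes are trivial and local rather than serving to eliminate intersection points, so no transversality or intersection-counting argument is needed. I expect the entire argument to be short.
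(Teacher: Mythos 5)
Your proposal is correct and follows essentially the same route as the paper: the paper's proof also realizes $\bm{g}\in\G(\L)$ by disjoint surfaces and attaches tubes to increase genera, with your version merely spelling out the locality and disjointness bookkeeping (stabilizing inside small pairwise disjoint balls) that the paper leaves implicit. No gap; the extra care you take is a virtue, not a deviation.
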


\begin{proof}
If $\bm{g}=(g_{1}, \cdots, g_{n})\in \G(\L)$, there exist pairwise disjoint surfaces $\Sigma_{i}$ embedded in $B^{4}$ of genera $g_{i}$ and $\partial \Sigma_{i}=L_{i}$. We can attach tubes to the surfaces $\Sigma_{i}$ to increase the genera. Thus $\bm{y}\in \G(\L)$ if $\bm{y}\succeq \bm{g}$. 

\end{proof}

\begin{lemma}
\label{shape 22}
If $\bm{g}=(g_{1}, \cdots, g_{n})\in \G(\L)$, then $\bm{g}\setminus g_{i}\in \G(\L\setminus L_{i})$ for all $1\leq i\leq n$. Moreover, if $\bm{g}\setminus g_{i}\in \G(\L\setminus L_{i})$, then for $g_{i}$ sufficiently large, $\bm{g}=(g_{1}, \cdots, g_{n})\in \G(\L)$.

\end{lemma}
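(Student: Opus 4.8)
The plan is to prove the two implications separately, with the forward direction amounting to bookkeeping and the converse carrying all of the geometric content. For the forward implication, I would start from pairwise disjoint, smoothly embedded surfaces $\Sigma_{1}, \cdots, \Sigma_{n}\subset B^{4}$ realizing $\g\in \G(\L)$, with $\partial \Sigma_{j}=L_{j}$ and $g(\Sigma_{j})=g_{j}$, and simply discard $\Sigma_{i}$. The remaining family $\lbrace \Sigma_{j}\rbrace_{j\neq i}$ is automatically pairwise disjoint and bounds the components of $\L\setminus L_{i}$ with genera $\g\setminus g_{i}$, so $\g\setminus g_{i}\in \G(\L\setminus L_{i})$ for every $1\leq i\leq n$. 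There is no obstacle here.

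For the converse, I would begin with pairwise disjoint surfaces $\lbrace \Sigma_{j}\rbrace_{j\neq i}\subset B^{4}$ realizing $\g\setminus g_{i}\in \G(\L\setminus L_{i})$, and construct a surface for $L_{i}$ by taking a Seifert surface of $L_{i}$ and pushing its interior into the open $4$-ball. Since $L_{i}$ is disjoint from each $L_{j}$, the resulting surface $\Sigma_{i}$ meets the other surfaces only in interior points, and after a small perturbation it is transverse to each $\Sigma_{j}$. The crucial input, which is exactly what the cancellation process of Figure \ref{cancel} relies on, is that the signed count of intersection points of $\Sigma_{i}$ with each $\Sigma_{j}$ equals $lk(L_{i}, L_{j})=0$, so the positive and negative intersections pair up.

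Next I would invoke the cancellation process: each application tubes $\Sigma_{i}$ along an arc in some $\Sigma_{j}$ to remove one positive and one negative intersection point, raising $g(\Sigma_{i})$ by $1$ and lowering the total intersection count by $2$. After finitely many steps $\Sigma_{i}$ becomes disjoint from every $\Sigma_{j}$, at some genus $g_{i}^{0}$, so $(g_{1}, \cdots, g_{i}^{0}, \cdots, g_{n})\in \G(\L)$. To upgrade this from the single value $g_{i}^{0}$ to all sufficiently large $g_{i}$, I would apply Lemma \ref{shape 11}: attaching tubes to $\Sigma_{i}$ inside a small ball raises its genus one step at a time while preserving disjointness, so every $g_{i}\geq g_{i}^{0}$ is realizable, giving $\g\in \G(\L)$ for $g_{i}$ sufficiently large.

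The main obstacle I anticipate is controlling the tubing so that each cancellation strictly decreases the intersection count: when I tube $\Sigma_{i}$ along an arc in $\Sigma_{j}$, I must keep the tube in a small neighborhood of $\Sigma_{j}$ that is disjoint from every $\Sigma_{k}$ with $k\neq i,j$ and from the rest of $\Sigma_{i}$, so that no new intersections are introduced. Verifying that this can always be arranged (together with the identification of the algebraic intersection number with the linking number, which justifies pairing the crossings) is where the care lies; the stabilization step and the forward direction are routine.
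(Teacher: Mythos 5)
Your proposal is correct and follows essentially the same route as the paper: discard $\Sigma_{i}$ for the forward direction, and for the converse push a Seifert surface of $L_{i}$ into $B^{4}$ and apply the cancellation process, using that vanishing linking numbers make the signed intersection counts zero. The only difference is that you make explicit two points the paper leaves implicit --- keeping the cancellation tubes in a small neighborhood of $\Sigma_{j}$ away from the other surfaces, and invoking Lemma \ref{shape 11} to realize every $g_{i}\geq g_{i}^{0}$ rather than a single value --- both of which are welcome clarifications rather than departures.
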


\begin{proof}
 If $\bm{g}\in \G(\L)$, it is easy to obtain that $\bm{g}\setminus g_{i}\in \G(\L\setminus L_{i})$. Conversely, if $\bm{g}\setminus g_{i}\in \G(\L \setminus L_{i})$, for sufficiently large $g_{i}\gg 0$, we claim that $\bm{g}\in \G(\L)$. Suppose that $\L\setminus L_{i}$ bounds pairwise disjoint surfaces $\Sigma_{j}$ in $B^{4}$. Let $\Sigma_{i}$ in $S^{3}$ denote a Seifert surface bounded by $L_{i}$. Then $\Sigma_{i}$ intersects with $\Sigma_{j}$ transversely at even number of points in $B^{4}$ since the linking number equals 0. By the cancellation process, we add tubes to $\Sigma_{i}$ until the new surface is disjoint from all the surfaces $\Sigma_{j}$. Thus for sufficiently large $g_{i}$, $\bm{g}\in \G(\L)$. 

\end{proof}

\begin{lemma}
The set $\G(\L)$ is determined by the set of maximal lattice points and $\G(\L\setminus L_{i})$ for all $1\leq i\leq n$. 
\end{lemma}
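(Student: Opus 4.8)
The plan is to follow the proof of Lemma \ref{shape 3} line for line, replacing $\G_{HF}$ throughout by $\G$ and invoking the analogues Lemma \ref{shape 11} and Lemma \ref{shape 22} in place of Lemma \ref{shape 1} and Lemma \ref{shape 2}. Here a lattice point $\bm{z}$ is called \emph{maximal} for $\G(\L)$ if $\bm{z}\notin\G(\L)$ but $\bm{z}+\bm{e}_{i}\in\G(\L)$ for all $i$, exactly as in Definition \ref{maximal lattice point} with $\G_{HF}$ replaced by $\G$. Concretely, I would establish the membership criterion: for $\bm{x}=(x_{1}, \cdots, x_{n})$, one has $\bm{x}\notin\G(\L)$ if and only if either $\bm{x}\preceq\bm{z}$ for some maximal lattice point $\bm{z}$, or $\bm{x}\setminus x_{i}\notin\G(\L\setminus L_{i})$ for some $i$. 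Since the right-hand side refers only to the set of maximal lattice points and to the sets $\G(\L\setminus L_{i})$, this criterion is exactly the assertion that $\G(\L)$ is determined by those data.

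For the ``if'' direction I argue by contraposition: assume $\bm{x}\in\G(\L)$. Then by Lemma \ref{shape 11} every $\bm{z}\succeq\bm{x}$ lies in $\G(\L)$, so $\bm{x}$ cannot be $\preceq$ any maximal lattice point, since those are excluded from $\G(\L)$; and by Lemma \ref{shape 22} we have $\bm{x}\setminus x_{i}\in\G(\L\setminus L_{i})$ for every $i$. Hence neither alternative on the right-hand side can hold, which is what is needed.

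The substance is the ``only if'' direction. Suppose $\bm{x}\notin\G(\L)$ and, having disposed of the easy case, assume $\bm{x}\setminus x_{i}\in\G(\L\setminus L_{i})$ for all $i$; I must produce a maximal lattice point $\bm{z}\succeq\bm{x}$. I build $\bm{z}$ by a coordinate-sweeping procedure: starting from $\bm{x}^{(0)}=\bm{x}$, if $\bm{x}^{(k)}+\bm{e}_{i}\in\G(\L)$ for every $i$ then $\bm{x}^{(k)}$ is maximal and I stop; otherwise I pick a direction $i$ with $\bm{x}^{(k)}+\bm{e}_{i}\notin\G(\L)$ and set $\bm{x}^{(k+1)}=\bm{x}^{(k)}+t_{i}\bm{e}_{i}$, where $t_{i}\geq 1$ is the largest integer with $\bm{x}^{(k)}+t_{i}\bm{e}_{i}\notin\G(\L)$. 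Two observations make this work. First, such a finite $t_{i}$ exists: the projection $\bm{x}^{(k)}\setminus x^{(k)}_{i}$ stays in $\G(\L\setminus L_{i})$ throughout the process, because it starts there by assumption and is only ever increased in the surviving coordinates, which keeps it there by Lemma \ref{shape 11} applied to the sublink; then Lemma \ref{shape 22} forces $\bm{x}^{(k)}+t\bm{e}_{i}\in\G(\L)$ for all large $t$, while Lemma \ref{shape 11} makes membership along this ray upward closed in $t$, so a unique last exterior point exists. Second, and this is the crux of termination, once I have pushed in direction $i$ the point $\bm{x}^{(k+1)}+\bm{e}_{i}$ lies in $\G(\L)$, and by Lemma \ref{shape 11} it remains in $\G(\L)$ after any later increase of other coordinates; hence direction $i$ is never chosen again. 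Thus each of the $n$ directions is used at most once, the procedure halts after at most $n$ steps, and it outputs a maximal lattice point $\bm{z}\succeq\bm{x}$.

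I expect the only delicate point to be this termination bookkeeping, namely checking that pushing one coordinate to its threshold cannot un-saturate a previously saturated coordinate; this is precisely where the upward-closedness of Lemma \ref{shape 11} enters, and it sharpens the ``stops after finite steps'' claim in the proof of Lemma \ref{shape 3} into the explicit bound of $n$ steps. Everything else is a direct transcription of that earlier argument.
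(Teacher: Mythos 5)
Your proof is correct and takes essentially the same route as the paper: the paper's own proof is just the remark that one repeats the argument of Lemma \ref{shape 3} with Lemma \ref{shape 11} and Lemma \ref{shape 22} in place of Lemma \ref{shape 1} and Lemma \ref{shape 2}, which is precisely the transcription you carry out. Your termination bookkeeping (each direction, once saturated, stays saturated by upward-closedness, so the sweep halts in at most $n$ steps) merely makes rigorous, and slightly sharpens, the paper's ``the process stops after finite steps.''
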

\begin{proof}
The proof is similar to the one in Lemma \ref{shape 3} by using Lemma \ref{shape 11} and Lemma \ref{shape 22}. 
\end{proof}

\section{Examples}
\label{examples}

\subsection{Examples}
\label{simple examples}
For $L$-space links, the $H$-function can be computed explicitly by the Alexander polynomials of the link and sublinks. The lower bounds for 4-genus of the link in Section \ref{main proof} can also be computed explicitly. In this section, we will show examples of $L$-space links where $\G(\L)=\G_{HF}(\L)$.


\begin{example}
\label{family of two bridge}
Let $k$ be a positive integer. The  two bridge link $\L_{k}=b(4k^{2}+4k, -2k-1)$  is a 2-component $L$-space link with linking number $0$ \cite{Liu}, and both link components are unknots,  see Figure \ref{two bridge} . The Alexander polynomial of $\L_{k}$ is computed in \cite[Section 3]{Liu}, 
$$\Delta_{\L_{k}}(t_{1}, t_{2})=(-1)^{k}\sum\limits_{|i+1/2|+|j+1/2|\leq k} (-1)^{i+j}t_{1}^{i+1/2}t_{2}^{j+1/2}. $$

\definecolor{linkcolor0}{rgb}{0.45, 0.15, 0.15}
\definecolor{linkcolor1}{rgb}{0.15, 0.15, 0.45}
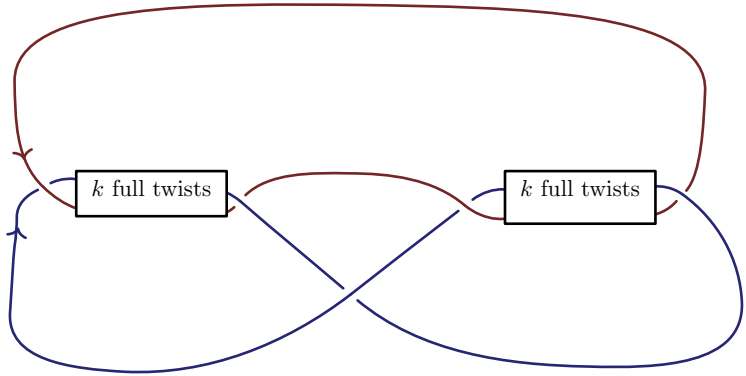
\begin{figure}[H]
\begin{tikzpicture}[line width=1, line cap=round, line join=round]
  \begin{scope}[color=linkcolor0]
    \draw (9.11, 2.54) .. controls (9.31, 2.77) and (9.34, 3.40) .. 
          (9.36, 3.86) .. controls (9.41, 4.95) and (6.83, 4.98) .. 
          (4.79, 5.01) .. controls (2.76, 5.03) and (0.18, 5.06) .. 
          (0.18, 4.00) .. controls (0.19, 3.49) and (0.19, 2.94) .. (0.56, 2.59);
    \draw (0.56, 2.59) .. controls (0.83, 2.33) and (1.20, 2.15) .. (1.42, 2.39);
    \draw (1.57, 2.55) .. controls (1.77, 2.78) and (2.13, 2.64) .. (2.38, 2.40);
    \draw (2.38, 2.40) .. controls (2.60, 2.21) and (2.91, 2.14) .. (3.10, 2.32);
    \draw (3.25, 2.48) .. controls (3.56, 2.78) and (4.13, 2.78) .. 
          (4.62, 2.77) .. controls (5.17, 2.77) and (5.75, 2.70) .. (6.17, 2.34);
    \draw (6.17, 2.34) .. controls (6.42, 2.14) and (6.76, 2.09) .. (6.99, 2.29);
    \draw (7.16, 2.46) .. controls (7.42, 2.71) and (7.83, 2.60) .. (8.12, 2.36);
    \draw (8.12, 2.36) .. controls (8.38, 2.14) and (8.76, 2.15) .. (8.98, 2.40);
    \draw[->] (0.3, 3.00) -- +(0.01, -0.05);
  \end{scope}
  \begin{scope}[color=linkcolor1]
    \draw (4.65, 1.16) .. controls (3.78, 0.49) and (2.74, 0.08) .. 
          (1.65, 0.13) .. controls (0.91, 0.17) and (0.08, 0.28) .. 
          (0.12, 0.92) .. controls (0.14, 1.21) and (0.16, 1.62) .. 
          (0.18, 1.80) .. controls (0.20, 1.93) and (0.21, 2.06) .. 
          (0.21, 2.19) .. controls (0.21, 2.36) and (0.34, 2.48) .. (0.49, 2.55);
    \draw (0.66, 2.64) .. controls (0.93, 2.77) and (1.26, 2.67) .. (1.50, 2.47);
    \draw (1.50, 2.47) .. controls (1.75, 2.26) and (2.09, 2.13) .. (2.31, 2.33);
    \draw (2.46, 2.47) .. controls (2.65, 2.66) and (2.96, 2.58) .. (3.18, 2.40);
    \draw (3.18, 2.40) .. controls (3.63, 2.01) and (4.09, 1.63) .. (4.55, 1.24);
    \draw (4.74, 1.08) .. controls (5.58, 0.38) and (6.71, 0.22) .. 
          (7.80, 0.20) .. controls (8.77, 0.17) and (9.87, 0.22) .. 
          (9.85, 1.04) .. controls (9.83, 1.62) and (9.53, 2.15) .. (9.06, 2.49);
    \draw (9.06, 2.49) .. controls (8.81, 2.67) and (8.47, 2.61) .. (8.22, 2.43);
    \draw (8.02, 2.28) .. controls (7.73, 2.07) and (7.32, 2.11) .. (7.07, 2.38);
    \draw (7.07, 2.38) .. controls (6.87, 2.61) and (6.52, 2.61) .. (6.27, 2.42);
    \draw (6.08, 2.27) .. controls (5.60, 1.90) and (5.12, 1.53) .. (4.65, 1.16);
    \draw[->] (0.22, 2.00) -- +(0.01, 0.05);
  \end{scope}
    \draw[fill=white] (1,2.2) -- (1,2.8) -- (2,2.8) node[scale=0.8,below=0.1] {$k$ full twists} -- (3,2.8) -- (3,2.2) -- cycle;
    \draw[fill=white] (6.7,2.1) -- (6.7,2.8) -- (7.7,2.8) node[scale=0.8,below=0.1] {$k$ full twists} -- (8.7,2.8) -- (8.7,2.1) -- cycle;
\end{tikzpicture}
\caption{Two bridge link $b(4k^{2}+4k, -2k-1)$.    \label{two bridge}}
\end{figure}

The $H$-function of $\L_{k}$ is computed in \cite[Proposition 6.12]{Liu}. Then the $h$-function of $\L_{k}$  is shown  in Figure \ref{h-function of two bridge},  where $h(k, 0)=0$, and $h(k-1, 0)=1$. For the shaded area bounded by the ``stairs", the $h$-function is nonzero, and  $h(\bm{s})=0$ for all lattice points $\bm{s}$ on the ``stairs" and outside of the shaded area in Figure \ref{h-function of two bridge}. Thus, $\G_{HF}(\L)$ consists of all the lattice points on the ``stairs" and outside the shaded area in the first quadrant. By the inequality  \eqref{bound 1},
$$g_{4}(\L_{k})\geq \min\lbrace s_{1}+s_{2}\mid h(\bm{x})=0 \textup{ if }\bm{x}\succeq \bm{s}=(s_{1}, s_{2}) \rbrace=k. $$

Observe that the  components of $\L_{k}$ bound disks $D_{1}$ and $D_{2}$ in $S^{3}$. Push the disks into $B^{4}$. Then they intersect transversely at $2k$ points in $B^{4}$. By the cancellation process of  crossings, we obtain disjoint surfaces $\Sigma'_{1}$ and $\Sigma'_{2}$ in $B^{4}$ bounded by the link components. Assume that  the genus of $\Sigma'_{1}$ is $k$ and $\Sigma'_{2}$ is still a disk of genus 0. Then $g_{4}(\L_{k})\leq k$. Thus, $g_{4}(\L_{k})=k$. We can add tubes to either $D_{1}$ or $D_{2}$ in the cancellation process. Thus, for all $\bm{g}=(g_{1} ,g_{2})$ with $g_{1}+g_{2}=k$, we  find disjoint surfaces in $B^{4}$ of genera $g_{1}, g_{2}$, respectively. Therefore, $\G(\L_{k})=\G_{HF}(\L_{k})$.
\end{example}

\begin{figure}[H]
\centering
\includegraphics[width=4.0in]{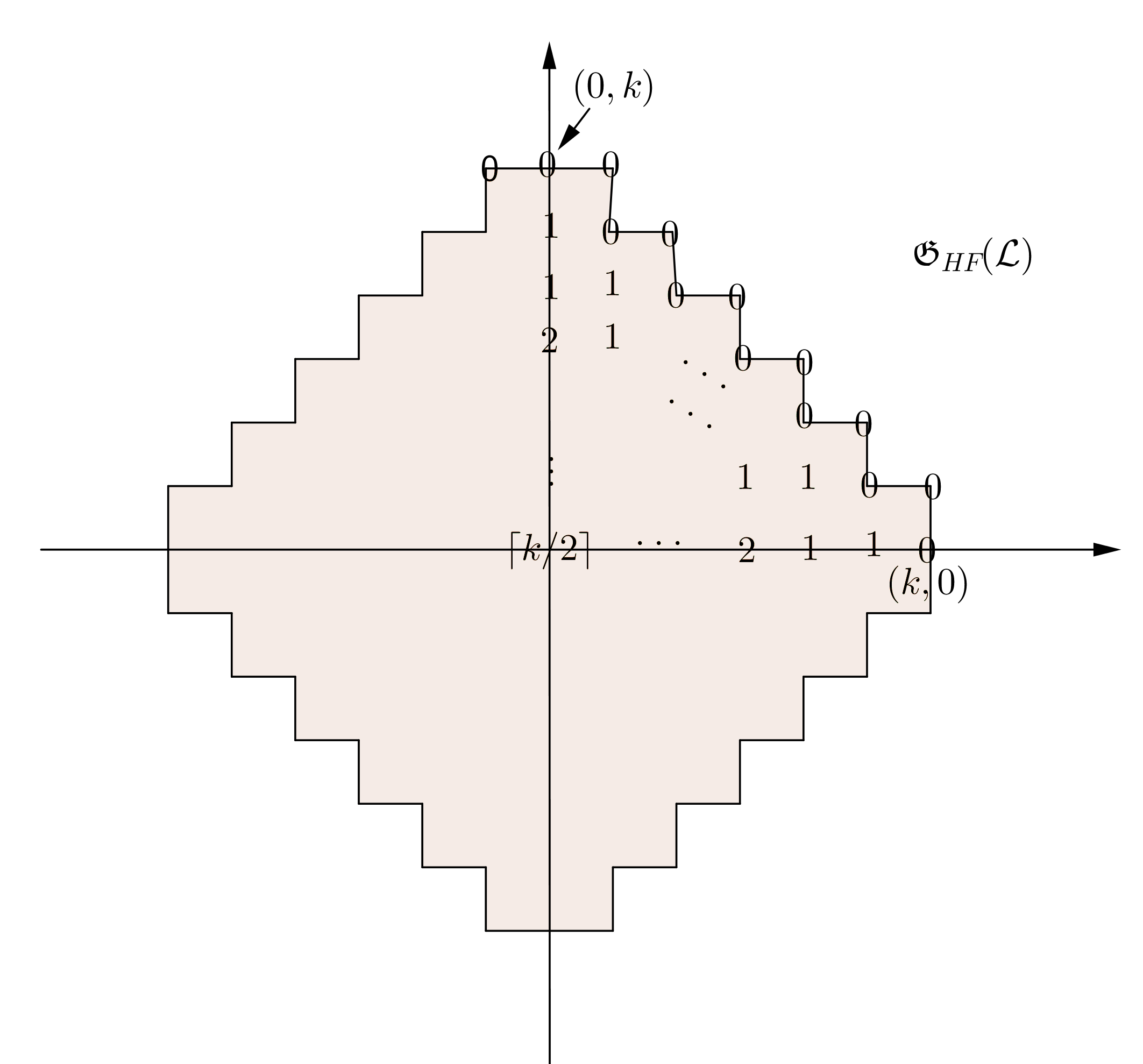}
\caption{ The $h$-function of $\L_{k}$. \label{h-function of two bridge}}
\end{figure}

\begin{remark}
For $k=1$ we get the Whitehead link  $\L_{1}$, and the 4-genus  $g_{4}(\L_{1})=1$.

\end{remark}

\begin{example}
\label{family of simple links}
The Borromean link  $\L=L_{1}\cup L_{2}\cup L_{3}$ is a 3-component $L$-space link with vanishing pairwise linking numbers \cite{Liu}. Its Alexander polynomial equals
$$\Delta_{\L}(t_{1}, t_{2}, t_{3})=(t_{1}^{1/2}-t_{1}^{-1/2})(t_{2}^{1/2}-t_{2}^{-1/2})(t_{3}^{1/2}-t_{3}^{-1/2}).$$
By Equation \eqref{converse computation of h-function}, if $\bm{v}\succ \bm{0}$, $h_{\L}(\bm{v})=0$ and $h_{\L}(\bm{0})=1$. Thus,  $\G_{HF}(\L)=\lbrace \bm{v}\in \Z^{n}\mid \bm{v}\succ \bm{0}\rbrace$.

Each component $L_{i}$ is an unknot and bounds a disk $D_{i}$ in $B^{4}$. The disks $D_{1}$ and $D_{2}$ intersect transversely at two points in $B^{4}\setminus L_{3}$. By the cancellation process, $L_{1}$ and $L_{2}$ bound disjoint surfaces $D'_{1}$ and $D_{2}$ of genera 1 and 0 respectively in $B^{4}\setminus L_{3}$. The disk $D_{3}$ intersects $D'_{1}$ and $D_{2}$ transversely in $B^{4}$. Since $D'_{1}$ and $D_{2}$ are disjoint, by an isotopy in $B^{4}$, we can make $D_{3}$ disjoint from $D'_{1}$ and $D_{2}$. Thus, $g_{4}(\L)=1$, and $\G(\L)=\G_{HF}(\L)$.

\end{example}

\begin{example}
\label{mirror link}
The mirror of $L7a3$ is a 2-component $L$-space link $\L=L_{1}\cup L_{2}$ with linking number 0, where $L_{1}$ is the right-handed trefoil and $L_{2}$ is the unknot \cite{Liu}. Its Alexander polynomial equals 
$$\Delta_{\L}(t_{1}, t_{2})=-(t_{1}^{1/2}-t_{1}^{-1/2})(t_{2}^{1/2}-t_{2}^{-1/2})(t_{2}+t_{2}^{-1}).$$ 
The $h$-function in the first quadrant is shown as in Figure \ref{mirror h-function} by Equation \eqref{converse computation of h-function} or the formula in \cite{Liu}. Then the shaded area is $\mathfrak{G}_{HF}(\L)$ and $g_{4}(\L)\geq 2$. 

Observe that the right-handed trefoil and the unknot bound Seifert surfaces of genera 1 and 0 respectively in $S^{3}$. They intersect transversely at two points in $B^{4}$. By the cancellation process, we can obtain disjoint surfaces of genera $(2, 0)$ or $(1, 1)$ bounded by the link.  Thus, $g_{4}(\L)=2$, and $\G(\L)=\G_{HF}(\L)$.

\end{example}

\begin{figure}[H]
\centering
\includegraphics[width=2.0in]{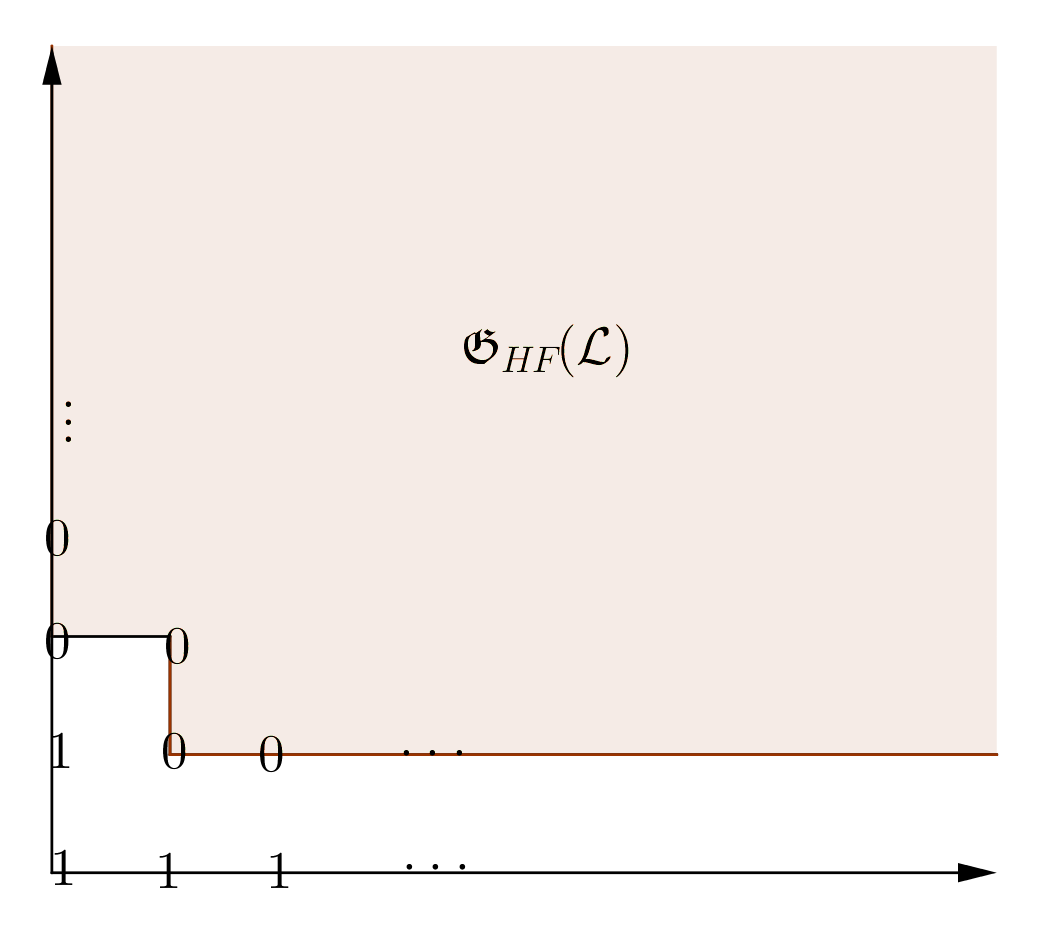}
\caption{The $h$-function for the mirror of L7a3   \label{mirror h-function}}

\end{figure}

\begin{example} Let $\L$ denote the disjoint union of two links $\L_{1}$ and $\L_{2}$. Then $\G(\L)=\G(\L_{1})\times \G(\L_{2})$ and $\G_{HF}(\L)=\G_{HF}(\L_{1})\times \G_{HF}(\L_{2})$.

\end{example}

\begin{proof}
Suppose that $\L_{1}$ has $n_{1}$ components and $\L_{2}$ has $n_{2}$ components. If $\bm{g}_{1}\in \G(\L_{1})$ and $\bm{g}_{2}\in \G(\L_{2})$, then $(\bm{g}_{1}, \bm{g}_{2})\in \G(\L)$ where $\L=\L_{1}\sqcup \L_{2}$. Conversely, if  $$\bm{g}=(g_{1}, \cdots, g_{n_{1}}, \cdots, g_{n_{1}+n_{2}})\in \G(\L),$$
it is straightforward to obtain that $(g_{1}, \cdots, g_{n_{1}})\in \G(\L_{1})$ and $(g_{n_{1}+1}, \cdots, g_{n_{1}+n_{2}})\in \G(\L_{2})$. Thus, $\G(\L)=\G(\L_{1})\times \G(\L_{2})$. 

For the set $\G_{HF}(\L)$, recall that $H_{\L}(\bm{s})=H_{\L_{1}}(\bm{s}_{1})+H_{\L_{2}}(\bm{s}_{2})$ \cite[Proposition 3.11]{Boro} where $\bm{s}=(s_{1}, \cdots, s_{n_{1}}, \cdots, s_{n_{1}+s_{n_{2}}})$,  $\bm{s_{1}}=(s_{1}, \cdots, s_{n_{1}})$  and $\bm{s_{2}}=(s_{n_{1}+1}, \cdots, s_{n_{1}+n_{2}})$. Since  $H$-functions take nonnegative values, $H_{\L}(\bm{s})=0$ if and only if $H_{\L_{1}}(\bm{s}_{1})=H_{\L_{2}}(\bm{s}_{2})=0$. Thus, $\G_{HF}(\L)=\G_{HF}(\L_{1})\times \G_{HF}(\L_{2})$. 
\end{proof}

\subsection{Cables of $L$-space links}
Let $\L=L_{1}\cup \cdots \cup L_{n}\subset S^{3}$ be an $L$-space link with vanishing pairwise linking numbers. For each $1\leq i \leq n$, let $p_{i}$ and $q_{i}$ be coprime  positive integers. Consider the link $\L_{cab}=L_{(p_{1}, q_{1})}\cup \cdots \cup L_{(p_{n}, q_{n})}$  where $L_{(p_{i}, q_{i})}$ is the $(p_{i}, q_{i})$-cable on $L_{i}$. If for all $i$, $p_{i}/q_{i}$ are sufficiently large, then $\L_{cab}$ is also an $L$-space link \cite[Proposition 2.8]{Boro}. 
\begin{theorem}
\label{cable property}
For such cable links $\L_{cab}$, $$\G_{HF}(\L_{cab})=\lbrace \bm{u}\in \Z^{n}\mid \bm{u}\succeq T(\bm{s}) \textup{ for some } \bm{s}\in \G_{HF}(\L)\rbrace$$
where $T: \Z^{n}\rightarrow \Z^{n}$ is defined as $$T(\bm{s})= \bm{p}\cdot \bm{s}+\left( \left( p_{1}-1\right) \left( q_{1}-1\right)/2, \cdots, \left( p_{n}-1\right) \left( q_{n}-1\right)/2\right), $$  $\bm{p}=(p_{1}, \cdots , p_{n}), \bm{s}=\lbrace s_{1}, \cdots, s_{n} \rbrace$ and $\bm{p}\cdot \bm{s}=p_{1}s_{1}+\cdots+p_{n}s_{n}$. 
 \end{theorem}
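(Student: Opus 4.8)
The plan is to deduce the statement from the Alexander-polynomial description of $\G_{HF}$ in Lemma \ref{determine Alexander polynomial}, using that cabling acts by a controlled rescaling on the generating function $\tilde{\Delta}$. I begin with the elementary reductions. Cabling preserves vanishing pairwise linking numbers, since $lk(L_{(p_i,q_i)},L_{(p_j,q_j)})=p_ip_j\,lk(L_i,L_j)=0$; hence $\H(\L_{cab})=\Z^n$ and $\G_{HF}(\L_{cab})=\lbrace\bm{u}\succeq\bm{0}:H_{\L_{cab}}(\bm{u})=0\rbrace$. Writing $c_i=(p_i-1)(q_i-1)/2$, the map $T$ has $i$-th coordinate $p_is_i+c_i$ and, as each $p_i>0$, is strictly order-preserving; thus the right-hand side is precisely the upward closure of $T(\G_{HF}(\L))$. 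By Lemma \ref{shape 1} both sides are up-sets, so it is enough to prove the two inclusions. Throughout I use that $\L_{cab}$, and every sublink cable $(\L\setminus L_B)_{cab}$, is an $L$-space link for $q_i/p_i$ large (\cite[Proposition 2.8]{Boro}), so that Lemma \ref{determine Alexander polynomial}, Theorem \ref{computation 4} and \eqref{computation 3} apply to all of them.

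The key input is the cabling factorization of the generating function. Because cabling is performed independently on the components, $\tilde{\Delta}_{\L_{cab}}$ factors as
\[
\tilde{\Delta}_{\L_{cab}}(t_1,\dots,t_n)=\tilde{\Delta}_{\L}(t_1^{p_1},\dots,t_n^{p_n})\cdot\prod_{i=1}^{n}P_i(t_i),
\]
where $P_i(t_i)$ is a one-variable Laurent polynomial coming from the $(p_i,q_i)$-cabling pattern, with top $t_i$-degree $c_i$ and unit leading coefficient. On the level of supports this says that the substitution $t_i\mapsto t_i^{p_i}$ scales the $i$-th exponent of the support of $\tilde{\Delta}_{\L}$ by $p_i$, while multiplication by $\prod_iP_i$ is a Minkowski sum with a box whose top corner is $\bm{c}=(c_1,\dots,c_n)$; together these are exactly the linear and the translation parts of $T$. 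The step connecting this to $\G_{HF}$ is: if $\bm{z}$ is a maximal lattice point (Definition \ref{maximal lattice point}) of $\G_{HF}(\L)$, then $\bm{z}+\bm{1}$ is a top corner of the support of $\tilde{\Delta}_{\L}$, i.e. the coefficient of $t^{\bm{y}}$ vanishes for all $\bm{y}\succ\bm{z}+\bm{1}$. This follows by expanding $\chi(HFL^-(\L,\bm{y}))$ through \eqref{computation of h-function 1} and observing that for each $B$ the point $\bm{y}-\bm{e}_B$ dominates some $\bm{z}+\bm{e}_i\in\G_{HF}(\L)$, so every summand $H_{\L}(\bm{y}-\bm{e}_B)$ is $0$. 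Combined with the factorization, the unique contributing decomposition of $T(\bm{z}+\bm{1})$ in $\tilde{\Delta}_{\L_{cab}}$ is the top one, so its coefficient is $(-1)^n\neq0$.

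With these in hand the two inclusions are as follows. For $T(\G_{HF}(\L))\subseteq\G_{HF}(\L_{cab})$: fix $\bm{s}\in\G_{HF}(\L)$ and let $\bm{w}\succ T(\bm{s})$. In any decomposition $w_i=p_iy_i+b_i$ with $b_i\le c_i$ the inequalities force $\bm{y}\succ\bm{s}$, whence the coefficient of $t^{\bm{y}}$ in $\tilde{\Delta}_{\L}$ vanishes; running the same computation on each sublink cable (using that $\bm{s}\setminus s_B\in\G_{HF}(\L\setminus L_B)$ by Lemma \ref{shape 2}) shows the sublink coefficients above $T(\bm{s})$ vanish too. By Lemma \ref{determine Alexander polynomial} this gives $T(\bm{s})\in\G_{HF}(\L_{cab})$, and Lemma \ref{shape 1} promotes it to the full upward closure. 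For the reverse inclusion I argue by induction on $n$ via Lemma \ref{shape 3} (the base case $n=1$ reducing to the genus formula $g(K_{p,q})=p\,g(K)+c$, i.e. $\min\G_{HF}(K_{p,q})=T(\min\G_{HF}(K))$). Given $\bm{u}\in\G_{HF}(\L_{cab})$, set $s_i=\lfloor(u_i-c_i)/p_i\rfloor$, the largest $\bm{s}$ with $T(\bm{s})\preceq\bm{u}$. If $\bm{s}\in\G_{HF}(\L)$ we are done; otherwise Lemma \ref{shape 3} gives either a maximal lattice point $\bm{z}\succeq\bm{s}$, in which case $T(\bm{z}+\bm{1})\succ\bm{u}$ is a support point of $\tilde{\Delta}_{\L_{cab}}$ with coefficient $(-1)^n$, contradicting $\bm{u}\in\G_{HF}(\L_{cab})$ by Lemma \ref{determine Alexander polynomial}; or a coordinate $j$ with $\bm{s}\setminus s_j\notin\G_{HF}(\L\setminus L_j)$, where the inductive hypothesis for $(\L\setminus L_j)_{cab}$ together with Lemma \ref{shape 2} produces the same contradiction.

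The main obstacle is the bookkeeping behind the factorization: pinning down the exact factors $P_i$ for multi-component links, where the $(t_1\cdots t_n)^{1/2}$ normalization of $\tilde{\Delta}$ must be reconciled with the torus-knot normalization, and checking that the top $t_i$-degree of $P_i$ is indeed $c_i$ with unit leading coefficient. Tied to this is showing that $T(\bm{z}+\bm{1})$ has a unique contributing decomposition, so that no cancellation occurs even though the boxes supporting the $P_i$ overlap across neighbouring scaled exponents when $c_i\gg p_i$. The remaining, more routine, points are the low-coordinate edge cases (where $u_i<c_i$ pushes $\lfloor(u_i-c_i)/p_i\rfloor$ out of the positive orthant), handled by the sublink induction, and the verification via \cite[Proposition 2.8]{Boro} that the $L$-space property persists for $\L_{cab}$ and all its sublink cables so that the polynomial characterization is valid throughout.
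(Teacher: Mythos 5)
Your proposal is correct, and it is built on the same two pillars as the paper's proof: the coefficient-vanishing characterization of $\G_{HF}$ for the link and all its sublinks (Lemma \ref{determine Alexander polynomial}) together with Turaev's cabling formula, recorded in the paper as Equation \eqref{tu2}; your forward inclusion (every decomposition $w_i=p_iy_i+b_i$ with $b_i\le c_i$ above $T(\bm{s})$ forces $\bm{y}\succ\bm{s}$, then Lemma \ref{shape 2} for sublink cables) is essentially the paper's argument. Where you genuinely diverge is the reverse inclusion. The paper also takes the largest $\bm{s}$ with $T(\bm{s})\preceq\bm{y}'$, but then asserts that $\bm{y}'\prec T(\bm{y})$ for \emph{all} $\bm{y}\succ\bm{s}$, which is literally false in coordinates where $y_i=s_i$ (there $T(\bm{y})_i=T(\bm{s})_i\le y_i'$); your detour through maximal lattice points repairs exactly this edge case, since a maximal $\bm{z}\succeq\bm{s}$ gives $T(\bm{z}+\bm{1})_i\ge p_i(s_i+1)+c_i>u_i$ in \emph{every} coordinate, and the coefficient $(-1)^n$ at $\bm{z}+\bm{1}$ --- established inside the paper's own proof of Lemma \ref{determine Alexander polynomial} --- survives cabling. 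Your explicit induction on components via Lemmas \ref{shape 2} and \ref{shape 3} also makes precise the sublink step the paper compresses into ``similarly,'' and your remark about $u_j<c_j$ pushing the floor out of the positive orthant flags a point the paper glosses entirely. The obstacles you list are real but resolve exactly as you sketch: distributing the prefactor of \eqref{tu2} gives $P_i(t_i)=t_i^{(1-p_i)/2}\bigl(t_i^{p_iq_i/2}-t_i^{-p_iq_i/2}\bigr)/\bigl(t_i^{q_i/2}-t_i^{-q_i/2}\bigr)$, whose support is the arithmetic progression $\lbrace c_i-jq_i\mid 0\le j\le p_i-1\rbrace$ (a progression of step $q_i$, not a box --- harmless, since only the top corner and unit top coefficient enter your argument), and since $\gcd(p_i,q_i)=1$ the residue of an exponent modulo $p_i$ determines $j_i$, so \emph{every} monomial of $\tilde{\Delta}_{\L_{cab}}$ has a unique decomposition and no cancellation can occur; alternatively, at $T(\bm{z}+\bm{1})$ uniqueness of the contributing decomposition already follows from your top-corner observation, because $p_i(y_i-z_i-1)=j_iq_i$ forces $\bm{y}\succ\bm{z}+\bm{1}$ whenever some $j_i\ge 1$. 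In short: same machinery, but your organization of the converse is more careful than the paper's at the coordinate-equality boundary, at the cost of an extra induction; the paper's version is shorter but leans on an assertion that needs your kind of patch to be literally true.
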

 
 \begin{proof}
 
By Lemma \ref{determine Alexander polynomial}, $\G_{HF}(\L)$ is determined by Alexander polynomials of the link and sublinks. Let $\Delta_{\L}(t_{1}, \cdots, t_{n})$ denote the Alexander polynomial of $\L$. Then the Alexander polynomial of the cable link $\L_{cab}$ is computed by Turaev in \cite[Theorem 1.3.1]{Tu},
\begin{equation}
\label{Tu}
\Delta_{\L_{cab}}(t_{1}, \cdots, t_{n})=\Delta_{\L}(t_{1}^{p_{1}}, \cdots, t_{n}^{p_{n}})\prod\limits_{i=1}^{n}\dfrac{t_{i}^{p_{i}q_{i}/2}-t_{i}^{-p_{i}q_{i}/2}}{t_{i}^{q_{i}/2}-t_{i}^{-q_{i}/2}}.
\end{equation}
If $\L$ is a knot, we should replace the Alexander polynomials by $\Delta_{\L}(t)/(t-1)$. Then 
\begin{equation}
\label{tu2}
\tilde{\Delta}_{\L_{cab}}(t_{1}, \cdots, t_{n})=t_{1}^{1/2-p_{1}/2}\cdots t_{n}^{1/2-p_{n}/2}\tilde{\Delta}_{\L}(t_{1}^{p_{1}}, \cdots, t_{n}^{p_{n}})\prod\limits_{i=1}^{n}\dfrac{t_{i}^{p_{i}q_{i}/2}-t_{i}^{-p_{i}q_{i}/2}}{t_{i}^{q_{i}/2}-t_{i}^{-q_{i}/2}}.
\end{equation}

Observe that $T(\bm{s})=\bm{p}\cdot \bm{s}+(1/2-p_{1}/2, \cdots, 1/2-p_{n}/2)+((p_{1}q_{1}-q_{1})/2, \cdots, (p_{n}q_{n}-q_{n})/2)$ for any $\bm{s}\in \Z^{n}$. We claim that the coefficients of $t_{1}^{y_{1}}\cdots t_{n}^{y_{n}}$ in $\tilde{\Delta}_{\L}(t_{1}, \cdots, t_{n})$ are 0 for all $\bm{y}\succ \bm{s}$ if and only if for all  $\bm{y'}\succ T(\bm{s})$, the coefficients of $t_{1}^{y'_{1}}\cdots t_{n}^{y'_{n}}$ are 0 in $\tilde{\Delta}_{\L_{cab}}(t_{1}, \cdots, t_{n})$. By Equation \eqref{tu2}, the coefficient of $t_{1}^{y'_{1}}\cdots t_{n}^{y'_{n}}$ is 0 if $\bm{y}'$ is not in the image of $T$ and $\bm{y}'\succ T(\bm{s})$. If $\bm{y}'=T(\bm{y})$ for some $\bm{y}\succ \bm{s}$, then the coefficient of $t_{1}^{y'_{1}}\cdots t_{n}^{y'_{n}}$ equals the coefficient of $t_{1}^{y_{1}}\cdots t_{n}^{y_{n}}$ in $\tilde{\Delta}_{\L}(t_{1}, \cdots, t_{n})$ which is 0. This proves the ``only if " part. For the ``if " part, suppose there exists $\bm{y}\succ \bm{s}$ such that the coefficient of  $t_{1}^{y_{1}}\cdots t_{n}^{y_{n}}$ is nonzero. Then the coefficient corresponding to $T(\bm{y})$ in $\tilde{\Delta}_{\L_{cab}}(t_{1}, \cdots, t_{n})$ is 0 which contradicts to our assumption. For all subsets $B\subset \lbrace 1, \cdots, n\rbrace$, the similar statement holds for Alexander polynomials $\tilde{\Delta}_{\L\setminus L_{B}}(t_{i_{1}}, \cdots, t_{i_{k}})$ and $\tilde{\Delta}_{\L_{cab}\setminus (\L_{cab})_{B}}(t_{i_{1}}, \cdots, t_{i_{k}})$.  By Lemma \ref{determine Alexander polynomial},
$\bm{y}'\in \G_{HF}(\L_{cab})$ if $\bm{y}'\succ T(\bm{s})$ for some $\bm{s}\in \G_{HF}(\L)$. Thus, $\G_{HF}(\L_{cab})\supset \lbrace \bm{u}\in \Z^{n}\mid \bm{u}\succeq T(\bm{s}) \textup{ for some } \bm{s}\in \G_{HF}(\L)\rbrace$.

Conversely, suppose $\bm{y}'\in \G_{HF}(\L_{cab})$. If $\bm{y}'=T(\bm{s})$ for some $\bm{s}\in \Z^{n}$, by Lemma \ref{determine Alexander polynomial} and the claim, $\bm{s}\in \G_{HF}(\L)$. If $\bm{y}'$ is not in the image of $T$, then there exists $\bm{s}\in \Z^{n}$ such that $\bm{y}'\succ T(\bm{s})$ and $\bm{y}'\prec T(\bm{y})$ for all $\bm{y}\succ \bm{s}$. We claim that $\bm{s}\in \G_{HF}(\L)$. If there exists $\bm{y}\succ \bm{s}$ such that the coefficient corresponding to $\bm{y}$ in $\tilde{\Delta}_{\L}(t_{1}, \cdots, t_{n})$ is not 0, then the coefficient corresponding to $T(\bm{y})$ in $\tilde{\Delta}_{\L_{cab}}(t_{1}, \cdots, t_{n})$ is also not 0 which contradicts to our assumption. Similarly, we prove that for all subsets $B\subset \lbrace 1, \cdots, n \rbrace$ and all $\bm{y}\succ \bm{s}$, the coefficients corresponding to $\bm{y}\setminus y_{B}$ in $\tilde{\Delta}_{\L\setminus L_{B}}(t_{i_{1}}, \cdots, t_{i_{k}})$ are all 0. By Lemma \ref{determine Alexander polynomial}, $\bm{s}\in \G_{HF}(\L)$. Thus, $\G_{HF}(\L_{cab})=\lbrace \bm{u}\in \Z^{n}\mid \bm{u}\succeq T(\bm{s}) \textup{ for some } \bm{s}\in \G_{HF}(\L)\rbrace$.

\end{proof}

\begin{lemma}
\label{geometry property}
For such cable links $\L_{cab}$, $\G(\L_{cab})\supset \left\lbrace \bm{u}\in \Z^{n} \mid \bm{u}\succeq T(\bm{g})\textup{ for some } \bm{g}\in \G(\L) \right\rbrace.$
\end{lemma}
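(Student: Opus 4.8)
The plan is to reduce the set-theoretic containment to a single membership statement and then realize that statement by an explicit cabling of the Seifert surfaces inside $B^{4}$. Since $\G(\L_{cab})$ is upward closed by Lemma \ref{shape 11} applied to the link $\L_{cab}$, it suffices to prove that $T(\bm{g})\in \G(\L_{cab})$ for every $\bm{g}=(g_{1},\cdots,g_{n})\in \G(\L)$: once $T(\bm{g})$ is known to lie in $\G(\L_{cab})$, every $\bm{u}\succeq T(\bm{g})$ lies there as well, which is exactly the claimed inclusion. Note also that the $i$-th coordinate of $T(\bm{g})$ equals $p_{i}g_{i}+(p_{i}-1)(q_{i}-1)/2$, which is an integer because $\gcd(p_{i},q_{i})=1$ forces $p_{i}-1$ or $q_{i}-1$ to be even.

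So fix $\bm{g}\in \G(\L)$ and choose pairwise disjoint smoothly embedded surfaces $\Sigma_{i}\subset B^{4}$ with $\partial \Sigma_{i}=L_{i}$ and $g(\Sigma_{i})=g_{i}$. First I would observe that each $\Sigma_{i}$ induces the Seifert (i.e. $0$-) framing on $L_{i}$, which is precisely the framing used to define the cable $L_{(p_{i},q_{i})}$, and pushing $\Sigma_{i}$ into the interior of $B^{4}$ does not change this boundary framing. Take pairwise disjoint tubular neighborhoods $N_{i}$ of $\Sigma_{i}$ in $B^{4}$; these can be chosen disjoint because the $\Sigma_{i}$ are. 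On the boundary $L_{i}\subset S^{3}$ the neighborhood $N_{i}$ restricts to a solid torus $V_{i}$ inside which $L_{(p_{i},q_{i})}$ sits as the $(p_{i},q_{i})$-curve with respect to the Seifert framing.

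Next I would construct, inside $N_{i}$ together with a boundary collar $S^{3}\times[0,\epsilon)$, a connected surface $\Sigma_{i}'$ with $\partial \Sigma_{i}'=L_{(p_{i},q_{i})}$ of the desired genus. The model is the Seifert surface of the $(p_{i},q_{i})$-torus knot obtained from the standard closed-braid diagram $(\sigma_{1}\cdots \sigma_{p_{i}-1})^{q_{i}}$ by Seifert's algorithm, which consists of $p_{i}$ disks joined by $q_{i}(p_{i}-1)$ bands. I would replace the $p_{i}$ Seifert disks by $p_{i}$ parallel, Seifert-framed pushoffs of $\Sigma_{i}$ lying in the deep part of $N_{i}$, and realize the $q_{i}(p_{i}-1)$ connecting bands inside the collar near $L_{i}$, where the $p_{i}$ parallel longitudes braid into the single curve $L_{(p_{i},q_{i})}$. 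Since $\gcd(p_{i},q_{i})=1$, the result is connected with a single boundary component $L_{(p_{i},q_{i})}$. An Euler-characteristic count gives $\chi(\Sigma_{i}')=p_{i}\chi(\Sigma_{i})-q_{i}(p_{i}-1)=p_{i}(1-2g_{i})-q_{i}(p_{i}-1)$, and combining $1-2g(\Sigma_{i}')=\chi(\Sigma_{i}')$ with the identity $1-p_{i}+q_{i}(p_{i}-1)=(p_{i}-1)(q_{i}-1)$ yields $g(\Sigma_{i}')=p_{i}g_{i}+(p_{i}-1)(q_{i}-1)/2$, i.e. the $i$-th coordinate of $T(\bm{g})$. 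The surfaces $\Sigma_{i}'$ are pairwise disjoint because they are contained in the disjoint neighborhoods $N_{i}$ (the collar portions can also be kept disjoint), so $T(\bm{g})\in \G(\L_{cab})$.

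I expect the main obstacle to be the careful embedded realization of the band/twist region: one must check that the $q_{i}(p_{i}-1)$ bands implementing the $(p_{i},q_{i})$-torus-knot pattern can be attached inside the collar without introducing intersections with the parallel copies of $\Sigma_{i}$ or with the neighborhoods $N_{j}$ for $j\neq i$, and that the attaching respects the Seifert framing so that the boundary is exactly $L_{(p_{i},q_{i})}$ rather than some other cable. Everything else — the reduction via upward closedness and the Euler-characteristic bookkeeping — is routine.
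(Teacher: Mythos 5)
Your proof is correct and follows essentially the same route as the paper: the paper likewise builds the surface for $L_{(p_{i},q_{i})}$ from $p_{i}$ parallel copies of $\Sigma_{i}$ joined by bands realizing the torus-knot pattern, notes the genus count $p_{i}g_{i}+(p_{i}-1)(q_{i}-1)/2$, and observes disjointness because the $\Sigma_{i}$ are disjoint. You simply supply details the paper leaves implicit --- the reduction to $T(\bm{g})\in\G(\L_{cab})$ via the upward closedness of Lemma \ref{shape 11}, the $0$-framing induced by a surface in $B^{4}$, and the precise band count $q_{i}(p_{i}-1)$ from the braid $(\sigma_{1}\cdots\sigma_{p_{i}-1})^{q_{i}}$ (the paper's ``$(p_{i}-1)(q_{i}-1)$ half-twisted bands'' is a looser statement of the same bookkeeping).
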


\begin{proof}
Suppose that the link components in $\L$ bound pairwise disjoint surfaces $\Sigma_{i}$ in $B^{4}$ of genera $g_{i}$. The cable knot $L_{(p_{i}, q_{i})}$ bounds a surface of genus $p_{i}g_{i}+(p_{i}-1)(q_{i}-1)/2$. We  start with $p_{i}$ copies of $\Sigma_{i}$ and use $(p_{i}-1)(q_{i}-1)$ half-twisted bands to connect them. Since $\Sigma_{i}$ are pairwise disjoint, the new surfaces are also pairwise disjoint. 

\end{proof}

\noindent
{\bf Proof of Proposition \ref{cable sharp}: }
Let $\G'= \left\lbrace \bm{u}\in \Z^{n} \mid \bm{u}\succeq T(\bm{g})\textup{ for some } \bm{g}\in \G(\L) \right\rbrace$. By assumption, $\G(\L)=\G_{HF}(\L)$. Then $\G'=\G_{HF}(\L_{cab})$ by Theorem \ref{cable property}. Since $\G_{HF}(\L_{cab})\supset \G(\L_{cab})\supset \G'$ by Lemma \ref{geometry property}, we have $\G_{HF}(\L_{cab})=\G(\L_{cab})$.
\qed

\begin{remark}
Proposition \ref{cable sharp} also holds if we only replace some link components in $\L$ by their cables. 
 
\end{remark}

By Proposition \ref{cable sharp}, we can apply cables on all $L$-space links in Examples of Subsection \ref{simple examples}.

\begin{example} 
Cables on the Whitehead link. 

Let $Wh_{p, q}$ denote the link consisting of the $(p, q)$-cable on one  component of the Whitehead link  and the unchanged second component. The linking number is 0, and $Wh_{p, q}$ is an $L$-space link if  $p, q$ are coprime with $q/p\geq 3$, \cite{Boro}. 
 
By Proposition \ref{cable sharp}, the $h$-function of $Wh_{p, q}$ in the first quadrant is shown as in Figure \ref{whitehead cable}. The shaded area is $\G_{HF}(Wh_{p, q})=\G(Wh_{p, q})$.  Thus
$g_{4}(Wh_{p, q})=g_{1}+g_{2}= (p-1)(q-1)/2+1.$ The link $Wh_{p, q}$ bounds disjoint surfaces of genera $(p-1)(q-1)/2$ and $1$ as in Figure \ref{whitehead cable}. The red line denotes the tube attached to the disk bounded by the unknot. 

\begin{figure}[H]
\centering
\includegraphics[width=3.0in]{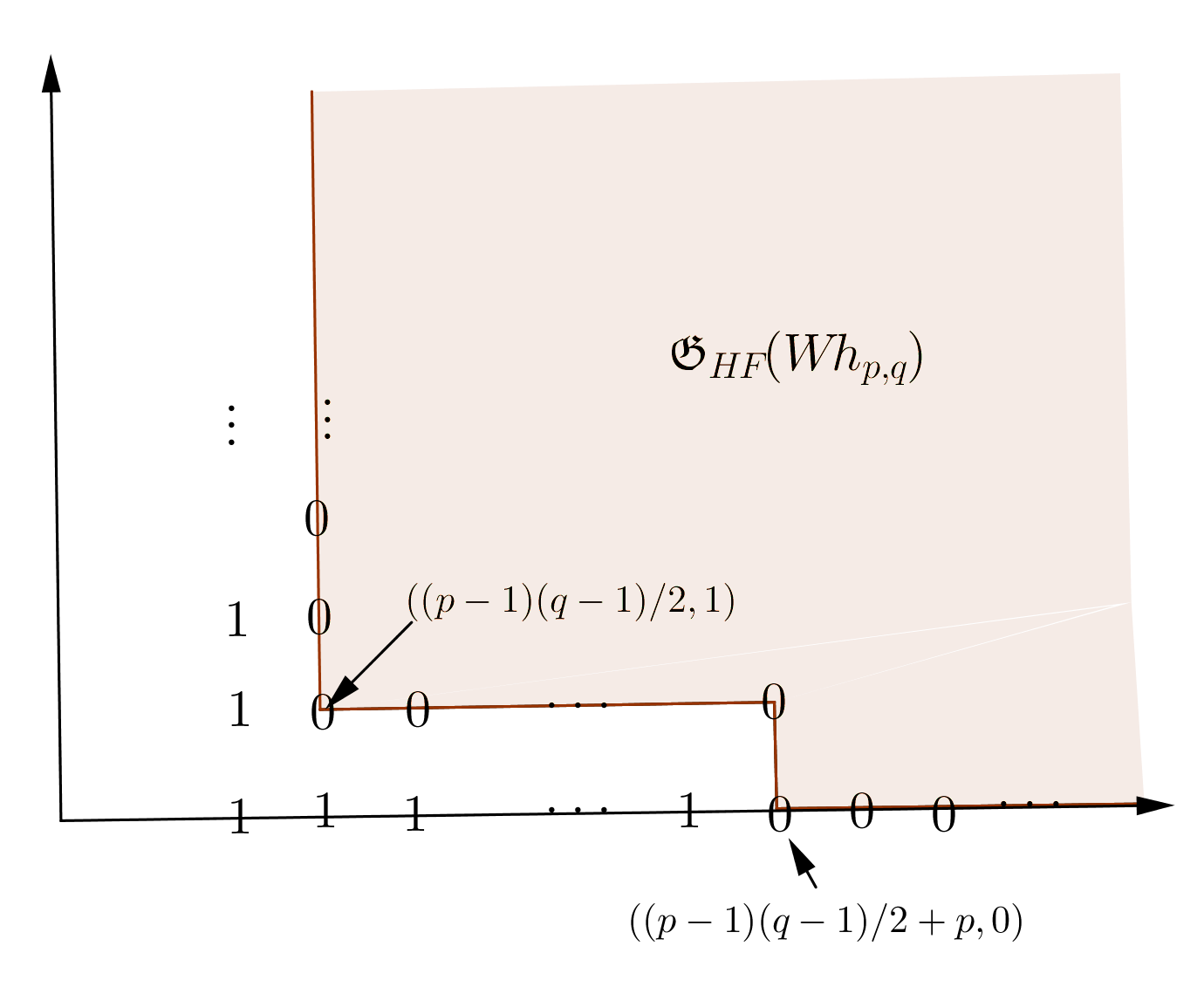}
\includegraphics[width=2.5in]{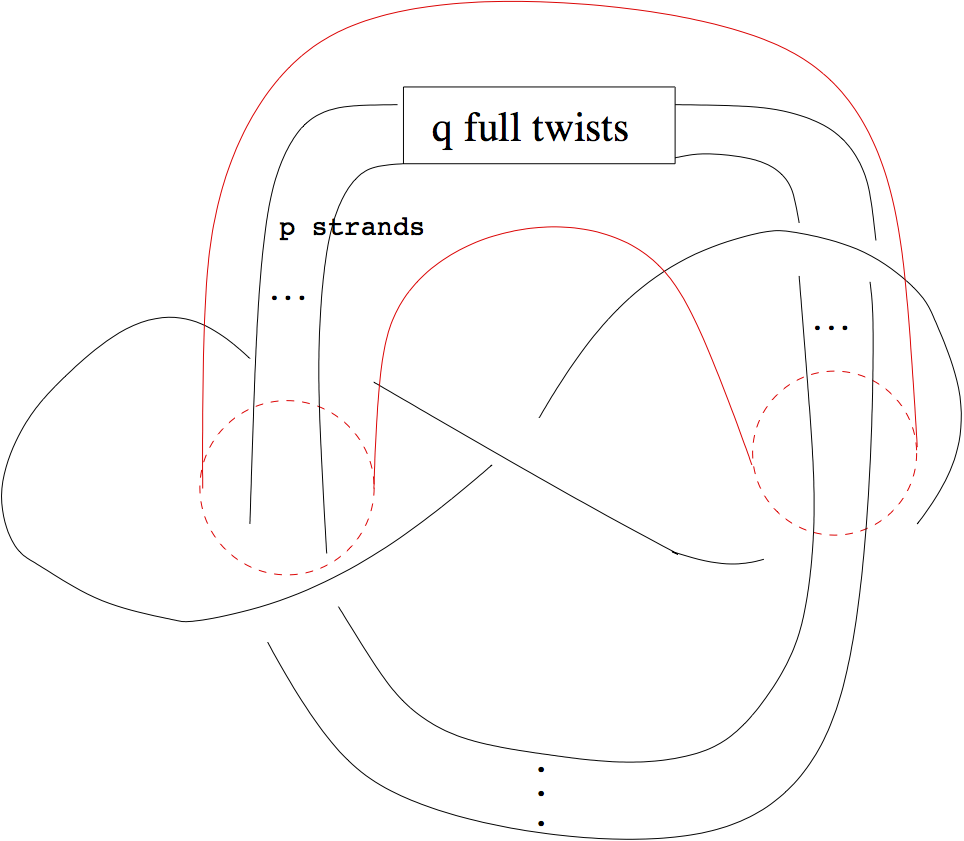}
\caption{The h-function of $Wh_{p, q}$  \label{whitehead cable} }

\end{figure}

\end{example}

\begin{example}
For the 2-bridge link $\L_{k}=b(4k^{2}+4k, -2k-1)=L_{1}\cup L_{2}$, consider the cable link $\L_{cab}=L_{(p_{1}, q_{1})}\cup L_{(p_{2}, q_{2})}$ where $p_{i}, q_{i}$ are coprime positive integers with $q_{i}/p_{i}$ sufficiently large. By proposition \ref{cable sharp}, $\G_{HF}(\L_{cab})=\G(\L_{cab})$ is shown as in Figure \ref{cable of bridge} where all the horizontal segments in the ``stair" have length $p_{1}$, vertical segments have length $p_{2}$, and there are $k$ steps.

\end{example}

\begin{figure}[H]
\centering
\includegraphics[width=2.5in]{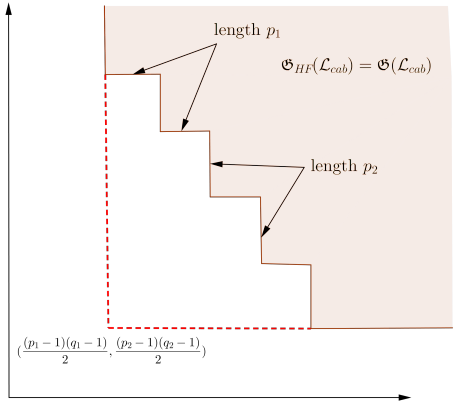}

\caption{The set $\G(\L_{cab})$ for the cable link   \label{cable of bridge}}

\end{figure}

\end{document}